\documentclass[reqno]{amsart}
\setlength {\marginparwidth}{2cm}

\usepackage[utf8]{inputenc}
\usepackage{emptypage}
\usepackage{amsopn,amstext,amsbsy,amsmath,amscd,amsthm,amsfonts,systeme,amsfonts,mathtools,amssymb,siunitx}
\usepackage{mathtools}
\usepackage{a4wide}
\usepackage{mathrsfs}
\usepackage{amssymb}
\usepackage{verbatim}
\usepackage{enumerate}
\usepackage{cancel}
\usepackage[shortlabels]{enumitem}
\usepackage{tikz}
\usepackage{mwe}
\usepackage{float}
\usepackage{subfig}
\usepackage{hyperref}
\usepackage{listings}
\usepackage{float}
\usepackage{todonotes}
\usepackage{graphicx}
\usepackage[T1]{fontenc}
\usepackage{amsmath}
\usepackage{latexsym}
\usepackage{dirtytalk}
\usepackage{graphics,epsfig}
\usepackage{color}
\usepackage{a4wide,mathrsfs}
\usepackage{multirow}

\DeclareMathOperator{\codim}{codim}
\usepackage{hyperref}
\usepackage[backend=biber,bibencoding=utf8,firstinits,maxnames=4,style=alphabetic,isbn=false, doi=false, eprint= true, url= false,date=year]{biblatex}
\usepackage[english=american]{csquotes}
\emergencystretch=1em

\bibliography{literature.bib}

\theoremstyle{plain}
\newtheorem{theorem}        {\bf Theorem}[section]
\newtheorem{corollary}    [theorem]  {\bf Corollary}
\newtheorem{lemma}        [theorem]  {\bf Lemma}
\newtheorem{proposition}  [theorem]  {\bf Proposition} % PROPOSITION

\theoremstyle{definition}
\newtheorem{example}      [theorem]  {\bf Example}
\newtheorem{definition}   [theorem]  {\bf Definition}

\theoremstyle{remark}
\newtheorem{remark}      [theorem]  {\bf Remark} % REMARK

\numberwithin{equation}{section}

\newcommand{\R}{\mathbb{R}}
\newcommand{\C}{\mathbb{C}}
\newcommand{\N}{\mathbb{N}}
\newcommand{\Z}{\mathbb{Z}}

\newcommand{\Acal}{\mathcal{A}}
\newcommand{\Bcal}{\mathcal{B}}

\newcommand{\Fcal}{\mathcal{F}}

\newcommand{\Lcal}{\mathcal{L}}
\newcommand{\Mcal}{\mathcal{M}}

\newcommand{\Ocal}{\mathcal{O}}

\newcommand{\Tcal}{\mathcal{T}}
\newcommand{\Ucal}{\mathcal{U}}
\newcommand{\Vcal}{\mathcal{V}}
\newcommand{\Wcal}{\mathcal{W}}
\newcommand{\Xcal}{\mathcal{X}}

\newcommand{\dd}{\,\mathrm{d}}

\renewcommand{\phi}{\varphi}
\newcommand{\per}{\mathrm{per}}
\newcommand{\ran}{\mathrm{Ran }}

\DeclareMathOperator{\real}{Re}

\title[Perturbations of embedded eigenvalues]{Perturbations of embedded eigenvalues of asymptotically periodic magnetic Schrödinger operators on a cylinder}
\author{Jonas Jansen, Sara Maad Sasane and Wilhelm Treschow}
\date{\today}
\address{\textit{Jonas Jansen:} Fraunhofer Institute for Algorithms and Scientific Computing SCAI, Schloss Birlinghoven, 53757 Sankt Augustin, Germany}
\email{jonas.jansen@scai.fraunhofer.de}
\address{\textit{Sara Maad Sasane:} Centre for Mathematical Sciences, Lund University, P.O. Box 118, 221 00 Lund, Sweden}
\email{sara.maad\_sasane@math.lth.se}
\address{\textit{Wilhelm Treschow:} Centre for Mathematical Sciences, Lund University, P.O. Box 118, 221 00 Lund, Sweden}
\email{wilhelm.treschow@math.lth.se}

\begin{document}

\begin{abstract}
    We investigate the persistence of embedded eigenvalues for a class of magnetic Laplacians on an infinite cylindrical domain. The magnetic potential is assumed to be $C^2$ and asymptotically periodic along the unbounded direction of the cylinder, with an algebraic decay rate towards a periodic background potential. Under the condition that the embedded eigenvalue of the unperturbed operator lies away from the thresholds of the continuous spectrum, we show that the set of nearby potentials for which the embedded eigenvalue persists forms a smooth manifold of finite and even codimension. The proof employs tools from Floquet theory, exponential dichotomies, and Lyapunov--Schmidt reduction. Additionally, we give an example of a potential which satisfies the assumptions of our main theorem.
\end{abstract}

\maketitle

% \noindent\textsc{MSC (2020): \textcolor{magenta}{Please add the MSC classes we gave to the journal here.}}

% \noindent\textsc{Keywords: \textcolor{magenta}{Did we do keywords? If so, please add. Otherwise, we can do this on Friday}}

\section{Introduction}
Embedded eigenvalues are eigenvalues of an operator which also belong to the continuous spectrum. They appear in a variety of different settings, including many applied problems coming from physics, such as in quantum mechanics in which eigenvalues represent the energies of the bound states which the underlying system may attain. In contrast, the continuous spectrum represents the energies attainable by a free particle \cite{reed2005}. Embedded eigenvalues therefore represents a bound state with the same energy as that of a free particle. The behaviour of isolated eigen\-values under perturbations is well known, much due to \cite{kato1995}, in that they persist under small perturbations. Embedded eigenvalues on the other hand typically behave rather differently. As it turns out, they generally cease to exist or turn into resonances even under arbitrarily small perturbations \cite{agmon1989}. It also seems as if finding a complete description of their behavior is much harder than in the case of isolated eigenvalues in that we typically need more detailed assumptions on the underlying problem and more exotic methods. The original ideas of Kato cannot be applied directly due to the fact that they rely on the Riesz projections, which require us to be able to construct a curve contained in the resolvent set, with the only part of the spectrum contained in its interior being the eigenvalue under consideration. This is clearly not possible for embedded eigenvalues, and so we have to resort to other methods.

Perhaps one of the most famous physical examples illustrating the importance of the behavior of embedded eigenvalues under perturbations is the quantum mechanical description of the Helium atom. When studying the Helium atom, a perturbative approach is typically taken in that the repulsion between the electrons is considered a perturbation. The issue is that the unperturbed operator has embedded eigenvalues. If we want to learn anything about the spectrum of the full operator, we thus need to learn how these eigenvalues behave under perturbations \cite{reed2005}.

In this paper, we investigate the problem of persistence of embedded eigenvalues for periodic Schrödinger operators on an infinite cylinder. The study of periodic Schrödinger operators is an area in which a lot of research has been conducted, with great success. Despite this, many problems remain unsolved. The attraction started out with the applications in solid state theory, but has been expanded to areas such as photonic crystals, fluid dynamics, topological insulators and many more.  For an extensive overview of periodic elliptic operators with focus on Schrödinger operators, we refer the reader to \cite{kuchment2016} and the references therein.
%feels lacking in bewteen here % 

More specifically, we consider the magnetic Laplacian on an infinite cylinder with an asymptotically periodic magnetic potential. We define the elliptic operator in question by
\begin{equation*}
    L_{A_0} = - \partial_z^2 - \bigl(\partial_{\phi} + i A_0(z,\phi)\bigr)^2,
\end{equation*}
where \((z,\phi)\in \R\times S^1\) and \(A_0\colon \R\times S^1\to \R\) is a magnetic potential, which is asymptotically periodic in \(z\). We call \(A_0\) asymptotically periodic if there exists a potential $A_{\per} \colon \R \times S^1 \to \R$, which is periodic in $z$, i.e. $A_{\per}(z+p,\phi) = A_{\per}(z,\phi)$ for all \((z,\phi)\in \R\times S^1\) and some \(p>0\), and such that \(|A_0(z,\phi)-A_{\per}(z,\phi)|\to 0\) as \(|z|\to \infty\). Such asymptotically periodic Schrödinger operators can be used to model electrons moving in an infinite crystal, a configuraton assumed by solids consisting of infinitely many identical particles arranged in a periodic array, exhibiting translational invariance with some local impurities accounting for the deviation from the underlying periodic potential \cite{ashcroft1976}.

Furthermore, we assume that $L_{A_0}$ has a simple eigenvalue \(\lambda_0>0\) embedded in the continuous spectrum of \(L_{A_0}\). We point out that it is a priori unclear whether asympotically periodic potentials with this property exist. In Section \ref{sec:ex} we show how to construct infinitely many examples of asymptotically periodic magnetic potentials possessing a simple embedded eigenvalue.

The goal of this paper is to analyze the persistence of the embedded eigenvalue under perturbations of the magnetic potential, which in this case will mean that we replace the potential $A_0$ with another potential $A$ close to $A_0$ in some specified Banach space. To this end, we aim to define a manifold of such perturbations for which the eigenvalue $\lambda_0$ persists, in the sense that there exists an eigenvalue, $\lambda$, of the perturbed operator in a neighborhood of the original eigenvalue, $\lambda_0$. Lastly, we prove that this manifold has a finite and even codimension in the Banach space of perturbations. In order for the codimension to be well-defined, it is required that $\lambda_0$ does not cross any thresholds of the continuous spectrum where the spectrum changes multiplicity. Note that, by Weyl's perturbation theorem, $\sigma_{\mathrm{ess}}(L_A)=\sigma_{\mathrm{ess}}(L_{A_0})=\sigma_{\mathrm{ess}}(L_{A_\per})$ and it is a well-known fact that the spectrum of periodic elliptic operators can be written as a union of, in this case, at most countably many, possibly overlapping, closed intervals, called spectral bands, see e.g. \cite[Sec. 3.6]{plum2011}. The edges of these spectral bands are called spectral edges \cite{kuchment2016}. For the definition of the notion multiplicity in the continuous spectrum, we refer the reader to \cite[Ch. 6.71, Def. 2]{achiezer1993}. Since the generating functions of the spectral bands are linearly independent, precisely at the edges of the spectral bands, the multiplicity may change. These are exactly the points that have to be avoided in order to use Lyapunov–Schmidt reduction in the proof.

\begin{figure}[H]
    \centering
    \includegraphics[width=0.3\linewidth]{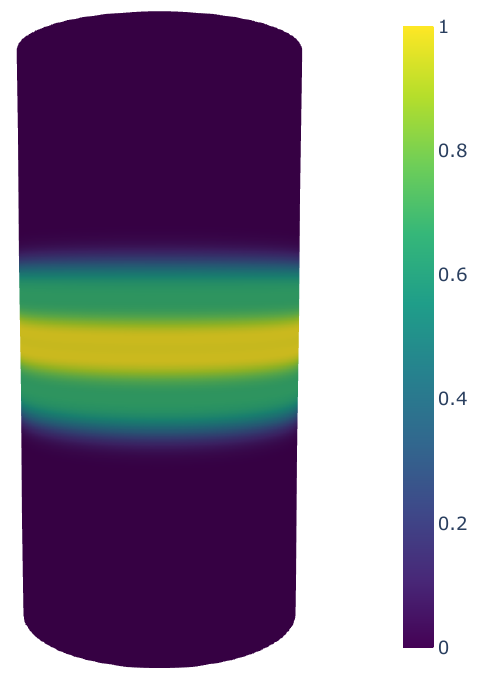}\hspace{2cm}
    \includegraphics[width=0.3\linewidth]{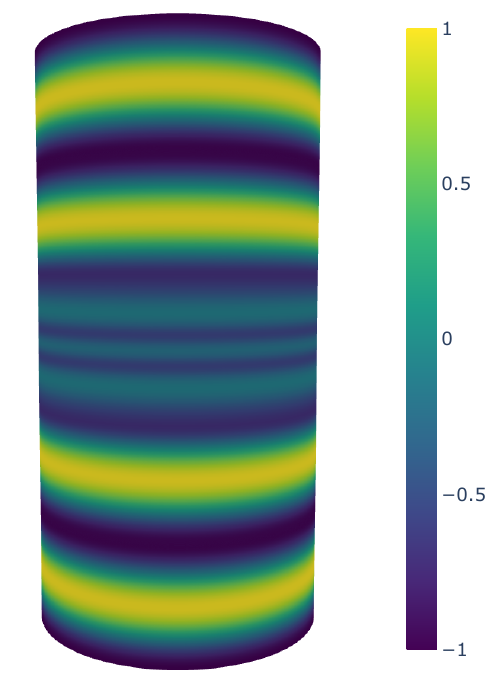}
    \caption{A radially symmetric eigenfunction (left) for an embedded eigenvalue of a magnetic Laplacian with asymptotically periodic magnetic potential (right) on an infinite cylinder.}
    \label{fig:eigenfct}
\end{figure}

Interest in the structure of the set of perturbations that preserve a non-treshold embedded eigenvalue dates back to the work of Agmon, Herbst, and Skibsted \cite{agmon1989}, who conjectured that this set forms something like a differentiable manifold. A rigorous example supporting this conjecture is given in \cite{cruz-sampedro2002}, where the authors consider a Schrödinger operator with an explicit, asymptotically vanishing potential on the real line, first investigated in 1929 by Wigner and von Neumann in \cite{vonneumann1993}. In \cite{cruz-sampedro2002}, the authors show that the set of potentials for which the embedded eigenvalue persists forms a manifold of codimension one.

This line of research was extended to broader classes of operators. In \cite{derks2008}, the authors study the Bilaplacian with an asymptotically vanishing potential on the infinite cylinder and demonstrate that embedded eigenvalues persist under perturbations belonging to a smooth manifold of finite codimension. Like the present manuscript, their method is based on a spatial dynamics formulation of the eigenvalue problem and the theory of exponential dichotomies, enabling the application of Lyapunov–Schmidt reduction to identify the manifold of admissible perturbations. For the magnetic Schrödinger operator considered here, this method was previously applied in the case of asymptotically constant magnetic potentials in \cite{laptev2017}.

Following the example in \cite{cruz-sampedro2002}, the Schrödinger operator on the real line has also been studied using this approach: for asymptotically constant potentials in \cite{maadsasane2023} and for asymptotically periodic operators in \cite{maadsasane2024}. In the latter case, Floquet theory is used to derive the exponential dichotomies.

An alternative approach to studying the persistence of embedded eigenvalues for self-adjoint operators is presented in \cite{agmon2011}. There, the authors consider bounded perturbations and, under the assumption of finite multiplicity of the continuous spectrum near the embedded eigenvalue, construct a finite-codimension manifold of perturbations for which the eigenvalue persists. This result relies on the limiting absorption principle and a detailed perturbation theory of the spectral density. Although this method appears extendable to relatively bounded perturbations, only the spatial dynamics method, as used in this manuscript, has so far been extended to operators with continuous spectra of infinite multiplicity. See \cite{derks2011} for such a study in the case of the planar Bilaplacian with a compactly supported, radially symmetric potential under general compactly supported perturbations.

As noted, this manuscript is an extension of a study of the magnetic Schrödinger operator on the infinite cylinder with asymptically constant potential \cite{laptev2017}. As in \cite{derks2008} and \cite{laptev2017}, the eigenvalue problem is reformulated via spatial dynamics into an ill-posed dynamical system, where the unbounded spatial direction is treated as time. However, unlike in previous studies, the system in our case is not asymptotically autonomous, introducing significant technical complications. We apply Floquet theory to analyze the spectrum of the evolution operator and establish exponential dichotomies. This, in turn, enables the use of Lyapunov–Schmidt reduction to determine the structure of the manifold of admissible perturbations.

\begin{remark}
    In Section \ref{sec:ex} we provide a method that can be used to find an infinite family of examples of an operator with the required properties, as well as one explicit example.
\end{remark}

\section{The Main Result}

For $\beta >1$, we define the space of admissible perturbations 
\begin{equation*}
    \Xcal_{\beta} = \{B\in C^2(\R\times S^1)\,:\, \|B\|_{\beta} < \infty\},
\end{equation*}
where 
\begin{equation*}
    \|B\|_{\beta} = \sup\limits_{(z,\phi)\in \R\times S^1} |D^2B(z,\phi)| + \sup\limits_{(z,\phi)\in \R\times S^1} \Bigl(|B(z,\phi)| + \Bigl|\frac{\partial B}{\partial\phi}(z,\phi)\Bigr|\Bigr)(1+|z|)^{\beta}.
\end{equation*}

We make the following assumptions:

\begin{enumerate}[label=(A.\arabic*)]
    \item\label{it:A.1} \(A_0\in C^2(\R\times S^1)\) and there is a $p$-periodic magnetic potential \(A_{\per}\in C^2(\R\times S^1)\) such that $A_0-A_{\per}\in \Xcal_{\beta}$ for some $\beta >1$;
    \item\label{it:A.2} there exists $z_0\in \R$ such that \(\int_{S^1} \left(A_0(z_0,\phi)-A_{\per}(z_0,\phi)\right)\dd \phi \neq 0\);
    \item\label{it:A.3} \(\lambda_0>0\) is a simple eigenvalue of \(L_{A_0}\), which is (without loss of generality) embedded in the continuous spectrum of \(L_{A_0}\);
    \item\label{it:A.4} \(\lambda_0\) is not situated on a threshold of the spectrum of $L_{A_0}$, i.e., a point where the continuous spectrum changes multiplicity. In particular, \(\lambda_0\notin\sigma_{\mathrm{per}}(L_{A_{\per}})\).
\end{enumerate}
Here, we denote by \(\sigma_{\mathrm{per}}(L_{A_{\per}})\) the spectrum of the operator \(L_{A_{\per}}\) on \(L^2_{\per}([0,p]\times S^1)\), where \(L^2_{\per}([0,p]\times S^1)\) denotes the space of functions on \(\R\times S^1\), which are $p$-periodic with respect to the \(z\)-variable and locally square integrable. We remark that the second part of \ref{it:A.3} is not a restriction, as the corresponding result for the case where $\lambda_0$ is an isolated eigenvalue holds by the extensive previous work on perturbation theory pertaining to isolated eigenvalues, see e.g. \cite{kato1995}. Our method can also handle this case, and then the proof is shorter than the general case. Hence, we focus on the more difficult case where the result is not well-known. By a threshold of the spectrum we mean an edge of one of the spectral bands.

We want to analyze the persistence of the embedded eigenvalue \(\lambda_0\) under small perturbations of the magnetic potential $A_0$. Therefore, we define the set
\begin{equation*}
    \Mcal_{\delta} = \{A\in C^2(\R\times S^1)\, : \, A-A_0\in \Xcal_{\beta} \text{ and there exists }\lambda \in (\lambda_0-\delta,\lambda_0+\delta) \text{ such that } \lambda \in \sigma_{p}(L_{A})\}
\end{equation*}
for \(\delta >0\) sufficiently small. Here, $\sigma_p$ denotes the point spectrum. Note further that $\Mcal_{\delta}$ is non-empty since \(A_0\in \Mcal_{\delta}\). The aim of this paper is to analyze the structure of the set $\Mcal_{\delta}$. We will show that, locally around $A_0$, $M_{\delta}$ defines a manifold of perturbations for which the eigenvalue persists and we determine its codimension, leading to the following result.

\begin{theorem}\label{thm:main}
    Suppose that \ref{it:A.1}--\ref{it:A.4} hold. Then there exists \(\delta >0\), \(m \in \N\) and a neighborhood \(\Ocal\) of \(A_0\) in \(A_0 + \Xcal_{\beta}\) such that \(M_{\delta} \cap \Ocal\) is a smooth manifold of codimension \(2m\).
\end{theorem}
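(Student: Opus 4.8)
The plan is to recast the eigenvalue equation $L_A u = \lambda u$ as a first-order, non-autonomous dynamical system in the spatial variable $z$, treating $z$ as time. Writing $U = (u, \partial_z u)^{\top}$ and expanding $u$ in a Fourier basis on $S^1$, the equation $-\partial_z^2 u = \lambda u + (\partial_\phi + iA(z,\phi))^2 u$ becomes $\partial_z U = \mathcal{A}(z;\lambda) U$, where $\mathcal{A}$ splits into an asymptotically $p$-periodic part $\mathcal{A}_{\per}(z;\lambda)$ coming from $A_{\per}$, plus a perturbative part that decays algebraically in $z$ (controlled by the $\Xcal_\beta$-norm of $A - A_{\per}$). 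An embedded eigenvalue corresponds to a solution $U(z)$ that decays as $z \to \pm\infty$, i.e.\ lies in the intersection of the stable subspace at $+\infty$ and the unstable subspace at $-\infty$. First I would establish, via Floquet theory applied to the periodic limiting system, that at $\lambda_0$ the monodromy operator has no eigenvalues on the unit circle — this is exactly where assumption \ref{it:A.4} enters, since spectral edges of $L_{A_{\per}}$ are precisely the $\lambda$ for which the monodromy has a unit-circle eigenvalue. This yields exponential dichotomies on $\R_+$ and $\R_-$ for the full non-autonomous system, with the key feature that the dichotomy projections depend smoothly (even analytically) on $\lambda$ near $\lambda_0$ and on the perturbation $B = A - A_0 \in \Xcal_\beta$; the asymptotically-autonomous-but-not-autonomous nature of the system (the genuinely new difficulty compared to \cite{derks2008,laptev2017}) is handled by a roughness/perturbation argument for dichotomies using the algebraic decay of the inhomogeneous part.

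Next I would set up the Lyapunov--Schmidt reduction. Let $E^s_+(\lambda, B)$ and $E^u_-(\lambda, B)$ denote the stable/unstable subspaces at the appropriate endpoint; an embedded eigenvalue persists iff $E^s_+(\lambda,B) \cap E^u_-(\lambda,B) \neq \{0\}$ at $z = 0$ (after matching the two half-line solutions). Because $\lambda_0$ is a \emph{simple} eigenvalue of $L_{A_0}$ (assumption \ref{it:A.3}), at $(\lambda_0, 0)$ this intersection is one-dimensional, spanned by the bound state $U_0$. The persistence condition can be encoded as the vanishing of a finite-dimensional ``gap map'' or Evans-function-type determinant: choosing complements, one writes the condition that $U_0 + (\text{corrections})$ remains in $E^s_+ \cap E^u_-$ as a pair of equations — an infinite-dimensional one solved by the implicit function theorem (the ``reduced'' equation), leaving a finite-dimensional bifurcation equation $\Phi(\lambda, B) = 0$ with $\Phi \colon (\lambda_0 - \delta, \lambda_0 + \delta) \times \Xcal_\beta \to \R^k$ (or $\C^{k}$) for some finite $k$. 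The finiteness of $k$ comes from the finite dimensionality of the dichotomy subspaces transverse to the flow, i.e.\ from the fact that only finitely many Fourier modes are ``propagating'' at energy $\lambda_0$ — equivalently, $\lambda_0$ lies in only finitely many spectral bands, which is where the threshold-avoidance of \ref{it:A.4} again matters for well-definedness of the codimension.

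Then I would analyze $\Phi$. The crucial structural input is self-adjointness of $L_A$: complex conjugation / time-reversal symmetry forces the relevant matching determinant to be, up to a nonvanishing factor, the modulus-squared (or a real quadratic expression) in the natural complex coordinates, so that $\Phi$ effectively takes values in $\C^m$ viewed as $\R^{2m}$, and its real-linearization $D_B\Phi(\lambda_0, 0)$ is surjective onto $\R^{2m}$. Surjectivity of the derivative is what one must verify by hand: one computes $D_B\Phi(\lambda_0, 0)[B]$ as an explicit integral pairing of $B$ against quantities built from $U_0$ and the generalized (bounded, non-decaying) solutions of the adjoint limiting system at $\lambda_0$ — morally $\int_{\R\times S^1} B \cdot (\text{products of eigenfunction and scattering solutions})$ — and shows these pairings span $\R^{2m}$ by exhibiting enough test perturbations $B \in \Xcal_\beta$ (this is where assumption \ref{it:A.2}, guaranteeing a genuine first-order coupling between the perturbation and the periodic background, is used to rule out degeneracy). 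Once $D_B\Phi(\lambda_0,0)$ is onto $\R^{2m}$ with, say, $\lambda$-derivative accounted for, the implicit function theorem in Banach spaces shows that $\{\Phi = 0\}$ — hence $M_\delta \cap \Ocal$ — is locally a smooth submanifold of $A_0 + \Xcal_\beta$ of codimension $2m$.

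I expect the main obstacle to be the construction and smooth (in $\lambda$ and $B$) parametrization of the exponential dichotomies for the \emph{non-autonomous} system: one must combine Floquet theory for the periodic limit with a careful roughness argument showing the algebraically decaying perturbation does not destroy the dichotomy and that the projections converge, with controlled rates, to the Floquet projections as $z \to \pm\infty$ — and simultaneously keep track of enough regularity to later differentiate the gap map in $B$. The secondary difficulty is the symmetry/parity argument that pins the codimension to an \emph{even} number $2m$ rather than merely finite; this should follow from the self-adjointness of $L_A$ forcing the Evans-type function to respect a complex structure, exactly as in \cite{derks2008,laptev2017}, but verifying it carries over to the magnetic, asymptotically periodic setting requires care with how complex conjugation interacts with the magnetic term $iA$.
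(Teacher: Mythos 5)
Your overall architecture (spatial dynamics, Floquet theory, exponential dichotomies, Lyapunov--Schmidt reduction, surjectivity of the derivative via \ref{it:A.2} and unique continuation) matches the paper's, but there is a genuine error at the step where the dichotomy is established, and it is not cosmetic. You claim that assumption \ref{it:A.4} forces the monodromy operator of the periodic limiting system to have \emph{no} eigenvalues on the unit circle at $\lambda_0$, and you then build exponential dichotomies for the unshifted system on $\R_\pm$ from this. This is false: since $\lambda_0$ is \emph{embedded} in the continuous spectrum, $\lambda_0\in\sigma(L_{A_{\per}})$, and by the Bloch--Floquet description of the spectral bands (Lemma \ref{lem:existance-of-purely-imaginary-floquet-exp}) there necessarily exist purely imaginary Floquet exponents, i.e.\ unit-circle Floquet multipliers. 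What \ref{it:A.4} actually provides is that $\lambda_0$ is not a band edge, so the number $2m$ of such multipliers (the dimension of the center space $X^c$) is finite, even and locally constant, and that $+1$ is not a multiplier. Were your claim true, the system at infinity would possess a genuine dichotomy on $\R$, $E^s_++E^u_-$ would have codimension $1$, and the reduction would yield a single scalar equation solved entirely by the implicit function theorem in $\lambda$; you would then conclude $m=0$, i.e.\ persistence under \emph{all} small perturbations, which is the isolated-eigenvalue conclusion rather than the theorem. Your own later remark that only finitely many modes are ``propagating'' at energy $\lambda_0$ contradicts the no-unit-circle claim, since the propagating modes \emph{are} precisely the unit-circle multipliers.

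The missing ingredient is the exponential shift: one replaces $\Bcal$ by $\Bcal\pm\eta I$ for small $\eta>0$ so that the shifted system has no purely imaginary Floquet exponents, obtains dichotomies for the shifted periodic systems, only then runs the roughness argument against the algebraically decaying perturbation $S(z;A)$, and undoes the shift to get the four evolution operators $\Phi^s,\Phi^{cu}$ on $\R_+$ and $\Phi^u,\Phi^{cs}$ on $\R_-$. The center space then enters the codimension count explicitly: $\codim(E^s_++E^u_-)=\dim X^c+1=2m+1$, with one equation absorbed by solving for $\lambda(A)$ and $2m$ equations remaining to cut out the manifold. Your evenness argument (``self-adjointness forces a complex structure'') is also only a heuristic; the paper proves evenness of $\dim X^c$ via the reversibility identity $R\Bcal(-z;\lambda,A_{\per})=-\Bcal(z;\lambda,A_{\per})R$, which gives $\det M=1$ for the reduced monodromy matrix and pairs up the unit-circle multipliers once $+1$ is excluded. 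The remainder of your outline (matching at $z=0$, pairing the derivative against adjoint solutions, use of \ref{it:A.2} and unique continuation for linear independence) is consistent with the paper's proof.
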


\begin{remark}
Note that the natural number $m$, to be properly defined later, is directly related to the multiplicity of the continuous spectrum at $\lambda_0$. In particular, this implies that the result is consistent with the known theory for isolated eigenvalues, in which case the multiplicity of the continuous spectrum at $\lambda_0$ is $0$, giving that $m=0$. This also highlights the importance of assumption \ref{it:A.4}: if the eigenvalue moves so that the multiplicity of the continuous spectrum changes from $\lambda_0$ to $\lambda$, the methods do not work and thus the result does not hold.
\end{remark}

The methods we use combine those in \cite{laptev2017} with more general methods from spatial dynamics and Floquet theory. The main idea for proving Theorem \ref{thm:main} is to find a sufficiently regular function depending on $\lambda$ and $A$, defined in a neighborhood of $(\lambda_0,A_0)$, which evaluates to zero if and only if $\lambda$ is an eigenvalue of the operator with potential $A$, in order to be able to apply the implicit funtion theorem to obtain the dependence of $\lambda$ being an eigenvalue of $L_A$ with magnetic potential $A$. The structure of the paper is as follows:

In Section \ref{sec:ODEformulation}, we introduce the spatial dynamics formulation of the eigenvalue problem. We then obtain exponential dichotomies for a shifted version of this ill-posed dynamical system, so that we obtain evolution operators acting on subspaces of bounded functions and functions decaying exponentially as $z\to +\infty$ and $z\to -\infty$, respectively.

In Section \ref{sec:exponentialdecay}, we use these evolution operators to prove that any eigenfunction must be exponentially decaying.

Section \ref{sec:Lyapunov} contains the proof of the main theorem. This entails the construction of the sufficiently regular function $\iota$ and the application of Lyapunov--Schmidt reduction to obtain $\lambda$ as a function of the potential $A$.

Finally, in Section \ref{sec:ex} we provide an infinite family of examples of magnetic Schrödinger operators on the infinite cylinder possessing simple embedded eigenvalues.

\section{The ODE Formulation}\label{sec:ODEformulation}

We now study the eigenvalue equation
\begin{equation}\label{eq:eigenvalue-problem}
    L_{A} u = \lambda u, \quad A \in A_0 + \Xcal_{\beta}
\end{equation}
in more detail. In order to do this, we rewrite the system using spatial dynamics. To simplify notation, we denote by \(\partial \colon H^j(S^1) \to H^{j-1}(S^1)\) the operator of differentiation in the \(\phi\)-direction. Then we may rewrite the eigenvalue problem \eqref{eq:eigenvalue-problem} as
\begin{equation}\label{eq:eigenvalue-spatial-dynamics}
    \Ucal' = \Bcal(z;\lambda,A) \Ucal,
\end{equation}
where the prime means differentiation with respect to \(z\), \(\Ucal = (u,u')^T\) and
\begin{equation*}
    \Bcal(z;\lambda,A) = \begin{pmatrix}
        0 & 1 \\
        -(\partial + iA)^2 - \lambda & 0
    \end{pmatrix}.
\end{equation*}
Note that the magnetic potential \(A\) acts on functions in \(C(\R;H^k(S^1))\) via \(Au(z) = A(z,\cdot)u(z)\). The matrix operator \(\Bcal(z;\lambda,A)\) acts on the space $X=H^1(S^1)\times L^2(S^1)$ with domain \(D(\Bcal)=H^2(S^1)\times H^1(S^1)\). The eigenvalue equation and the spatial dynamics formulation are equivalent descriptions.

\begin{lemma}\label{lem:equivalence-spatial-dynamics}
    \(u\in H^2_{\mathrm{loc}}(\R\times S^1)\) solves the eigenvalue problem \eqref{eq:eigenvalue-problem} if and only if the spatial dynamics formulation \eqref{eq:eigenvalue-spatial-dynamics} has a solution \(\Ucal\in C(\R;D(\Bcal)) \cap C^1(\R;X)\).
\end{lemma}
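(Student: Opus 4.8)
The plan is to prove the two implications separately. The forward direction hinges on elliptic regularity plus slicewise trace estimates, while the converse is a matter of unwinding the first‑order system \eqref{eq:eigenvalue-spatial-dynamics} into its two rows.

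\emph{From \eqref{eq:eigenvalue-problem} to \eqref{eq:eigenvalue-spatial-dynamics}.} Suppose $u\in H^2_{\mathrm{loc}}(\R\times S^1)$ solves $L_A u=\lambda u$. Expanding, $L_A=-\partial_z^2-\partial_\phi^2-2iA\,\partial_\phi-i(\partial_\phi A)+A^2$, so the first–order coefficients are controlled by $A$ and its $\phi$–derivatives and the zeroth–order coefficient by $A,\partial_\phi A,\partial_\phi^2A$; by \ref{it:A.1} these lie in $C^1$, which is exactly enough to run interior elliptic regularity in local coordinate charts on the cylinder (using that $S^1$ is compact), bootstrapping $u\in H^2_{\mathrm{loc}}\to H^3_{\mathrm{loc}}\to H^4_{\mathrm{loc}}(\R\times S^1)$. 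Applying the trace/embedding theorem $H^s_{\mathrm{loc}}(\R\times S^1)\hookrightarrow C(\R;H^{s-1/2}(S^1))$ (for $s>1/2$) to $u$, $\partial_z u$ and $\partial_z^2 u=-(\partial+iA)^2u-\lambda u$, we obtain
\[
u\in C(\R;H^2(S^1))\cap C^1(\R;H^1(S^1))\cap C^2(\R;L^2(S^1)),
\]
with the distributional $z$–derivatives agreeing with the strong ones. Setting $\Ucal=(u,u')^T$, the first two memberships give $\Ucal\in C(\R;D(\Bcal))$ and, together with the third, $\Ucal\in C^1(\R;X)$; the identity $u''=-(\partial+iA)^2u-\lambda u$, valid in $L^2(S^1)$ for every $z$, is precisely the second component of $\Ucal'=\Bcal(z;\lambda,A)\Ucal$, the first component being the tautology $u'=u'$.

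\emph{From \eqref{eq:eigenvalue-spatial-dynamics} to \eqref{eq:eigenvalue-problem}.} Conversely, let $\Ucal=(u_1,u_2)^T\in C(\R;D(\Bcal))\cap C^1(\R;X)$ solve \eqref{eq:eigenvalue-spatial-dynamics}. The first row reads $u_1'=u_2$, hence $u_2=u_1'\in C(\R;H^1(S^1))\cap C^1(\R;L^2(S^1))$; the second row reads $u_2'=u_1''=-(\partial+iA)^2u_1-\lambda u_1$ in $L^2(S^1)$. Put $u:=u_1$. On any bounded cylinder $I\times S^1$ all partial derivatives of $u$ up to order two are square integrable: $u,\partial_\phi u,\partial_\phi^2u\in C(I;L^2(S^1))$ from $u\in C(\R;H^2(S^1))$, $\partial_z u=u_2$ and $\partial_z^2u=u_2'$ are in $C(I;L^2(S^1))$, and $\partial_z\partial_\phi u=\partial_\phi u_2\in C(I;L^2(S^1))$ since $u_2\in C(\R;H^1(S^1))$; the identification of these separate–variable derivatives with the distributional ones on $I\times S^1$ is routine (Fubini and mollification). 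Thus $u\in H^2_{\mathrm{loc}}(\R\times S^1)$, and rearranging $u_1''=-(\partial+iA)^2u_1-\lambda u_1$ gives $-\partial_z^2u-(\partial+iA)^2u=\lambda u$, i.e.\ $L_A u=\lambda u$.

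The only genuinely nontrivial step is the forward direction: a generic element of $H^2_{\mathrm{loc}}(\R\times S^1)$ is only $H^{3/2}$ on the slices $\{z\}\times S^1$, which is insufficient both for placing $\Ucal(z)$ in $D(\Bcal)$ and for $C^1$–dependence on $z$. One must first upgrade $u$ to $H^4_{\mathrm{loc}}$ via elliptic regularity — this is where the $C^2$ hypothesis on $A$ in \ref{it:A.1} is used — after which the remaining regularity is a direct consequence of the trace theorem and of the equation itself expressing the top‑order $z$–derivative in terms of $\phi$–derivatives. Everything else is bookkeeping.
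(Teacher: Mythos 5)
Your overall strategy is the standard one and is exactly what the paper relies on: the paper gives no proof of its own here but delegates to \cite[Lem.~1]{laptev2017}, whose argument is precisely ``elliptic regularity plus slicewise traces'' in one direction and ``unwind the two rows of the first-order system'' in the other. Your backward direction is correct and complete up to the (genuinely routine) identification of separate-variable with distributional derivatives.

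The one step that does not hold as stated is the bootstrap $H^3_{\mathrm{loc}}\to H^4_{\mathrm{loc}}$. Writing $-\Delta u=\lambda u+2iA\partial_\phi u+i(\partial_\phi A)u-A^2u$, the zeroth-order coefficient involves $\partial_\phi A$, which under \ref{it:A.1} is only $C^1$; with $u\in H^3_{\mathrm{loc}}$ the term $(\partial_\phi A)u$ then lies in $H^1_{\mathrm{loc}}$ but not in $H^2_{\mathrm{loc}}$ (its second derivatives would require third derivatives of $A$), so interior regularity returns $H^3_{\mathrm{loc}}$ and no more. Fortunately $H^4$ is also unnecessary, contrary to your closing remark that one ``must'' reach it: from $u\in H^3_{\mathrm{loc}}$ the trace embedding already gives $u\in C(\R;H^{5/2}(S^1))$ and $\partial_z u\in C(\R;H^{3/2}(S^1))$, which places $\Ucal=(u,u')^T$ in $C(\R;D(\Bcal))$, and $\partial_z^2u\in H^1_{\mathrm{loc}}\hookrightarrow C(\R;H^{1/2}(S^1))\subset C(\R;L^2(S^1))$ (or, equivalently, read $\partial_z^2u$ off from the equation) yields $\Ucal\in C^1(\R;X)$. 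So delete the $H^4$ claim and stop the bootstrap at $H^3_{\mathrm{loc}}$; with that correction the proof is sound and coincides in substance with the cited one.
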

A proof of this result can be found in \cite[Lem. ~1]{laptev2017}.

\subsection{The system at infinity}

The next goal is to show that the eigenfunctions of \(L_A\) decay exponentially as \(|z|\to \infty\). To obtain this result, we must first analyze the asymptotic behaviour of solutions to \eqref{eq:eigenvalue-spatial-dynamics} and construct evolution operators corresponding to the spatial dynamics formulation, using exponential dichotomies as a tool.

As per assumption \ref{it:A.1}, the magnetic potential \(A_0\) and its perturbations are asymptotically periodic, we may study the system at infinity given by
\begin{equation}\label{eq:sys-at-inf}
    \Ucal' = \Bcal(z;\lambda,A_{\per}) \Ucal.
\end{equation}
We define the operator \(\Tcal\colon L^2(\R;X)\to L^2(\R;X)\) by
\begin{equation}\label{eq:system-at-infinity}
    \Tcal\Ucal = \dfrac{d}{dz}\Ucal - \mathcal{B}(z;\lambda,A_{\per})\Ucal = \dfrac{d}{dz}\Ucal - \bigl(\Acal + \Lambda_{\per}(z;\lambda)\bigr)\Ucal,
\end{equation}
where
\begin{equation*}
    \Acal = \begin{pmatrix}
        0 & 1 \\ - \partial^2 & 0
    \end{pmatrix} \quad \text{and} \quad  \Lambda_{\per}(z;\lambda) = \begin{pmatrix}
        0 & 0 \\ A_{\per}(z,\cdot)^2 - i(\partial A_{\per}) (z,\cdot) - 2i A_{\per}(z,\cdot)\partial - \lambda & 0
    \end{pmatrix}
\end{equation*}
with domain \(D(\Tcal) = H^1(\R;X)\cap L^2(\R;D(\Bcal))\). Note that \(\Acal\) is closed, densely defined and, by the Rellich--Kondrachev theorem, has a compact resolvent.

As \(A_{\per}\) is periodic, we associate to \(\Tcal\) the Floquet operator \(\Tcal_{\per}:L_{\per}^2([0,p],X)\to L_{\per}^2([0,p],X)\), which is defined by
\begin{equation}
    \Tcal_{\per}\Ucal = \dfrac{d}{dz} \Ucal - (\Acal  + \Lambda_{\per}(z;\lambda))\Ucal, \quad D(\Tcal_{\per}) = H^1_{\per}([0,p];X) \cap L^2_{\per}([0,p];D(\Bcal)).
\end{equation}

The eigenvalues \(\alpha\in \sigma_{p}(\Tcal_{\per})\) are called \emph{Floquet exponents} of \(\Tcal\). In particular, \(\Tcal_{\per} \Ucal_{\alpha} = \alpha\Ucal_{\alpha}\) implies that \(\Ucal(z) = e^{-\alpha z} \Ucal_{\alpha}(z)\) is a solution to \eqref{eq:sys-at-inf}. We want to prove that the set of Floquet exponents is discrete. To do this, we first need some preliminary results.
\begin{lemma}\label{lem:per-ref-op-spec}
    The operator $\Tilde{\Tcal}_{\per}\coloneqq \dfrac{d}{dz}-\Acal$ acting on \(L_{\per}^2([0,p],X)\) has compact resolvent.
\end{lemma}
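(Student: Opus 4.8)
The plan is to realise $\tilde{\Tcal}_{\per} = \frac{d}{dz} - \Acal$ as a closed operator on the Hilbert space $\Hcal = L^2_{\per}([0,p];X)$ and to show that its resolvent set is nonempty, after which compactness of the resolvent will follow from an Aubin--Lions / Rellich-type compact embedding argument. First I would check that $\tilde{\Tcal}_{\per}$ is closed and densely defined on $D(\tilde{\Tcal}_{\per}) = H^1_{\per}([0,p];X) \cap L^2_{\per}([0,p];D(\Acal))$; this is routine since $\Acal$ is closed with compact resolvent and the two parts act in complementary variables ($z$ and $\phi$). The natural way to produce a point in the resolvent set is to diagonalise in the $z$-direction via Fourier series: expanding $\Ucal(z) = \sum_{k\in\Z} \hat{\Ucal}_k e^{2\pi i k z / p}$, the equation $(\tilde{\Tcal}_{\per} - \mu)\Ucal = \Fcal$ decouples into the countable family $\bigl(\tfrac{2\pi i k}{p} - \Acal - \mu\bigr)\hat{\Ucal}_k = \hat{\Fcal}_k$. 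Since $\Acal$ has compact resolvent, its spectrum is a discrete set of eigenvalues in $\C$, so one can choose $\mu$ (e.g. with large negative real part, or off the relevant discrete set) such that $\tfrac{2\pi i k}{p} - \mu \notin \sigma(\Acal)$ for every $k$, with uniformly bounded inverses; this gives a bounded inverse $(\tilde{\Tcal}_{\per}-\mu)^{-1}$ on $\Hcal$, and simultaneously shows it maps $\Hcal$ into $D(\tilde{\Tcal}_{\per})$ with control on both the $H^1_{\per}$-norm in $z$ and the graph norm of $\Acal$ in $\phi$.

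Once $\mu \in \rho(\tilde{\Tcal}_{\per})$, compactness reduces to showing that the embedding
\begin{equation*}
    H^1_{\per}([0,p];X) \cap L^2_{\per}([0,p];D(\Acal)) \hookrightarrow L^2_{\per}([0,p];X)
\end{equation*}
is compact. Here $X = H^1(S^1) \times L^2(S^1)$ and $D(\Acal) = H^2(S^1)\times H^1(S^1)$, so $D(\Acal) \hookrightarrow X$ is compact by the Rellich--Kondrachev theorem (already noted for $\Acal$ in the excerpt). Combined with the $H^1$-regularity in $z$, the Aubin--Lions--Simon lemma yields compactness of the embedding into $L^2_{\per}([0,p];X)$: a bounded sequence in the intersection space is bounded in $L^2([0,p];D(\Acal))$ and its $z$-derivatives are bounded in $L^2([0,p];X)$, and since $D(\Acal) \hookrightarrow X \hookrightarrow X$ with the first embedding compact, one extracts a subsequence converging in $L^2_{\per}([0,p];X)$. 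Therefore $(\tilde{\Tcal}_{\per} - \mu)^{-1}$ factors through a compact embedding and is itself compact; by the resolvent identity the resolvent is then compact at every point of $\rho(\tilde{\Tcal}_{\per})$.

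I expect the main obstacle to be the bookkeeping in the Fourier diagonalisation: one must verify that the decoupled operators $\tfrac{2\pi i k}{p} - \Acal - \mu$ are boundedly invertible \emph{uniformly in} $k$, so that the reassembled operator is genuinely bounded on $L^2_{\per}$ and lands in the intersection domain with the correct norm estimates. The point is that $\sigma(\Acal)$ is a fixed discrete set, so $\operatorname{dist}(\tfrac{2\pi i k}{p}-\mu, \sigma(\Acal))$ stays bounded away from $0$ uniformly in $k$ for suitable $\mu$ (for instance choosing $\real\mu$ sufficiently negative, using that the eigenvalues of $\Acal$, being the square roots coming from $-\partial^2$ on $S^1$, lie in a fixed region); once this uniform gap is established, the standard functional calculus / graph-norm estimates for $\Acal$ close the argument. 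An alternative, slightly cleaner route that avoids the uniformity discussion is to prove directly that $\tilde{\Tcal}_{\per}$ is Fredholm of index zero with trivial kernel (again via the Fourier decomposition, each block being invertible) and then invoke the compact embedding above to conclude; I would mention this as the fallback if the uniform-in-$k$ estimate turns out to be delicate.
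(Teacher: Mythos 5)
Your proposal is correct and follows essentially the same route as the paper: Fourier decomposition in the periodic variable $z$ to exhibit a point of the resolvent set (the paper computes the eigenvalues $\alpha_{nk}=-\mu_n-\tfrac{2\pi k i}{p}$ and notes the spectrum is discrete, while you solve the resolvent equation blockwise), followed by the Aubin--Lions lemma together with the compact embedding $D(\Acal)\hookrightarrow X$ to conclude compactness of the resolvent. Your extra care about the uniform-in-$k$ invertibility of the blocks is a welcome refinement of a step the paper passes over quickly, but it is not a different method.
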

\begin{proof}
The eigenvalue equation of this operator is
\begin{equation*}
    \left(\dfrac{d}{dz} -\Acal- \alpha\right)\Ucal=0.
\end{equation*}
By periodicity in the $z$-variable, we can expand $\Ucal$ in a Fourier series as 
\begin{equation*}
    \Ucal=\sum_{k\in\Z} \Ucal_ke^{-\frac{2\pi ki}{p}z}.
\end{equation*}
Then, for $k\in\Z$,
\begin{equation*}
    \left(-\dfrac{2\pi ki}{p} - \Acal - \alpha\right)\Ucal_k=0.
\end{equation*}
Rewriting yields
\begin{equation*}
    \Acal\Ucal_k=-\left(\dfrac{2\pi ki}{p}+\alpha\right)\Ucal_k.
\end{equation*}
Thus, $-\left(\dfrac{2\pi ki}{p}+\alpha\right)$ must be an eigenvalue of $\Acal$ and $\Ucal_k$ an eigenvector, for $k\in\Z$. Hence, the eigenvalues of the operator $\Tilde{\Tcal}_{\per}$ are
\begin{equation*}
    \alpha_{nk} = -\mu_n - \dfrac{2\pi ki}{p}, \quad n\in\N, \hspace{2mm} k\in\Z,
\end{equation*}
where $\mu_n$ are eigenvalues of $\Acal$.
Thus, the operator $\Tilde{\Tcal}_{\per}$ has discrete spectrum and hence the resolvent set is non-empty. This means that, for $\alpha$ in the resolvent set, $(\Tilde{\Tcal}_{\per} - \alpha)^{-1}$ is a bounded operator from $L_{\per}^2([0,p],X)$ to $D(\Tilde{\Tcal}_{\per})=H_{\per}^1([0,p],X)\cap L_{\per}^2([0,p],D(\Acal))$, and hence compact from $L_{\per}^2([0,p],X)$ to itself by the Aubin--Lions lemma.
\end{proof}
The following lemma follows the ideas of \cite{mielke1996}, with slight changes due to the different setting.
\begin{lemma}\label{lem:cond-comp-res}
    Exactly one of the following holds:
    \begin{enumerate}[(i)]
        \item For all $\alpha\in\C$ there exists a $\Ucal\in D(\Tcal_{\per})\setminus\{0\}$ with $\Tcal_{\per}\Ucal=\alpha\Ucal$.
        \item The spectrum of $\Tcal_{\per}$ is discrete and the resolvent is compact.
    \end{enumerate}
\end{lemma}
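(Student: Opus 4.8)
The plan is to prove this dichotomy via an analytic Fredholm-type argument applied to the family $\alpha \mapsto \Tcal_{\per} - \alpha$. The key observation is that $\Tcal_{\per} = \Tilde{\Tcal}_{\per} - \Lambda_{\per}(z;\lambda)$, where $\Tilde{\Tcal}_{\per}$ has compact resolvent by Lemma~\ref{lem:per-ref-op-spec}, and $\Lambda_{\per}(z;\lambda)$ is, in a suitable sense, a relatively compact perturbation of $\Tilde{\Tcal}_{\per}$. First I would fix some $\alpha_0$ in the resolvent set of $\Tilde{\Tcal}_{\per}$ (which exists by the explicit computation of its spectrum) and write, for $\alpha \in \C$,
\begin{equation*}
    \Tcal_{\per} - \alpha = \bigl(\Tilde{\Tcal}_{\per} - \alpha_0\bigr)\Bigl(I - (\Tilde{\Tcal}_{\per} - \alpha_0)^{-1}\bigl(\Lambda_{\per}(z;\lambda) + \alpha - \alpha_0\bigr)\Bigr).
\end{equation*}
Since $\Lambda_{\per}(z;\lambda)$ maps $D(\Tilde{\Tcal}_{\per}) = H^1_{\per}([0,p];X) \cap L^2_{\per}([0,p];D(\Acal))$ boundedly into $L^2_{\per}([0,p];X)$ — here one uses that $A_{\per}$ is $C^2$ and the top-order term $-2iA_{\per}\partial$ costs only one $\phi$-derivative, which is available since $D(\Acal) = H^2(S^1)\times H^1(S^1)$ — the operator $\Kcal(\alpha) := (\Tilde{\Tcal}_{\per} - \alpha_0)^{-1}(\Lambda_{\per}(z;\lambda) + \alpha - \alpha_0)$ is compact on $L^2_{\per}([0,p];X)$ for each $\alpha$, being the composition of a bounded operator into $D(\Tilde{\Tcal}_{\per})$ with the compact embedding $D(\Tilde{\Tcal}_{\per}) \hookrightarrow L^2_{\per}([0,p];X)$.

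Next I would invoke the analytic Fredholm theorem: the map $\alpha \mapsto \Kcal(\alpha)$ is analytic on all of $\C$ (indeed affine in $\alpha$), so either $I - \Kcal(\alpha)$ is non-invertible for every $\alpha \in \C$, or it is invertible for all $\alpha$ outside a discrete set, with $(I - \Kcal(\alpha))^{-1}$ meromorphic. In the first case, since $\Tilde{\Tcal}_{\per} - \alpha_0$ is boundedly invertible, $\Tcal_{\per} - \alpha$ fails to be injective (or at least fails to be invertible) for all $\alpha$; a short additional argument — exploiting that $\ker(\Tcal_{\per} - \alpha)$ is finite-dimensional because $I - \Kcal(\alpha)$ is Fredholm of index zero, and the eigenvector equation $\Tcal_{\per}\Ucal = \alpha\Ucal$ is equivalent to $(I - \Kcal(\alpha))\Ucal = 0$ — gives alternative (i). In the second case, $\Tcal_{\per} - \alpha$ is invertible for all $\alpha$ outside a discrete set, so the resolvent set is non-empty; fixing such an $\alpha$, the resolvent maps $L^2_{\per}([0,p];X)$ boundedly into $D(\Tcal_{\per}) \subset H^1_{\per}([0,p];X)\cap L^2_{\per}([0,p];D(\Bcal))$, which embeds compactly into $L^2_{\per}([0,p];X)$ by the Aubin--Lions lemma (exactly as at the end of the proof of Lemma~\ref{lem:per-ref-op-spec}), yielding alternative (ii). Mutual exclusivity is immediate since a discrete spectrum cannot be all of $\C$.

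The main obstacle I anticipate is the relative compactness claim for $\Lambda_{\per}(z;\lambda)$ — specifically, verifying that it genuinely maps $D(\Tilde{\Tcal}_{\per})$ into $L^2_{\per}([0,p];X)$ and not merely into something larger. The delicate point is the off-diagonal entry containing $-2iA_{\per}(z,\cdot)\partial$: applied to $\Ucal = (u, u')^T \in D(\Tilde{\Tcal}_{\per})$, this produces a term involving $\partial u'$, i.e. the $\phi$-derivative of the second component, and one must check this lands in $L^2(S^1)$. Since $D(\Tilde{\Tcal}_{\per})$ requires $\Ucal(z) \in D(\Acal) = H^2(S^1)\times H^1(S^1)$ for a.e.\ $z$, we have $u' \in H^1(S^1)$ so $\partial u' \in L^2(S^1)$, and multiplication by the bounded function $A_{\per}$ preserves this; the $z$-integrability then follows from $\Ucal \in L^2_{\per}([0,p];D(\Acal))$. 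Once this mapping property is pinned down, the compactness of $\Kcal(\alpha)$ and the rest of the argument are routine, and the whole proof reduces to a careful bookkeeping of which Sobolev spaces the various terms live in.
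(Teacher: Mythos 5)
Your proof is correct and follows essentially the same route as the paper's: the same factorization $\Tcal_{\per}-\alpha = (\Tilde{\Tcal}_{\per}-\alpha_0)\bigl(I-\Kcal(\alpha)\bigr)$, compactness of $\Kcal(\alpha)$ via Lemma \ref{lem:per-ref-op-spec} and the Aubin--Lions embedding, and Fredholm theory to pass between non-invertibility and a nontrivial kernel (the paper applies the Fredholm alternative at a single $\alpha$ where (i) fails, whereas you invoke the analytic Fredholm theorem globally --- an inessential variation, and indeed the very tool the paper uses in the subsequent lemma). The only remark worth adding is that the ``delicate point'' you anticipate is a non-issue: the derivative $-2iA_{\per}\partial$ sits in the $(2,1)$ entry of $\Lambda_{\per}$ and therefore acts on the \emph{first} component $u\in H^1(S^1)$ (there is no $\partial u'$ term), landing in the $L^2(S^1)$ slot, so $\Lambda_{\per}$ is bounded on all of $L^2_{\per}([0,p];X)$ --- which is in fact what your chosen order of composition $(\Tilde{\Tcal}_{\per}-\alpha_0)^{-1}\circ(\Lambda_{\per}+\alpha-\alpha_0)$ requires, since there $\Lambda_{\per}$ is applied to arbitrary elements of $L^2_{\per}([0,p];X)$ rather than to elements of $D(\Tilde{\Tcal}_{\per})$.
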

\begin{proof}
    Assume that $(i)$ does not hold. Then there exists an $\alpha\in\C$ such that $\Tcal_{\per}\Ucal=\alpha\Ucal$ has only the trivial solution. Rewriting, the eigenvalue equation becomes
    $$\Tcal_{\per}\Ucal = \Tilde{\Tcal}_{\per}\Ucal - \Lambda_{\per}(\lambda)\Ucal= (\Tilde{\Tcal}_{\per}-\Tilde{\alpha})\Ucal + \Tilde{\alpha}\Ucal - \Lambda_{\per}(\lambda)\Ucal =\alpha\Ucal.$$
    Applying the resolvent $(\Tilde{\Tcal}_{\per}-\Tilde{\alpha})^{-1}$, for some $\Tilde{\alpha}$ in the resolvent set of $\Tilde{\Tcal}_{\per}$, and rewriting yields
    $$\Ucal - (\Tilde{\Tcal}_{\per}-\Tilde{\alpha})^{-1}\Lambda_{\per}(\lambda)\Ucal = (\alpha-\Tilde{\alpha})(\Tilde{\Tcal}_{\per}-\Tilde{\alpha})^{-1}\Ucal.$$
    Thus, the equation can be rewritten as
    $$M(\alpha;\lambda)\Ucal =\Ucal,$$
    where $M(\alpha;\lambda)\coloneqq (\Tilde{\Tcal}_{\per}-\Tilde{\alpha})^{-1}(\Lambda_{\per}(\lambda) + \alpha - \Tilde{\alpha})$, has only the trivial solution. The operator $M(\alpha;\lambda)$ is clearly compact, as the composition of a compact and a bounded operator. By Fredholm theory, it follows that $1-M(\alpha;\lambda)$ is invertible. Since
    \begin{equation*}
        \begin{aligned}
            (\Tilde{\Tcal}_{\per}-\Tilde{\alpha})(1-M(\alpha;\lambda))&=(\Tilde{\Tcal}_{\per}-\Tilde{\alpha})(1-(\Tilde{\Tcal}_{\per}-\Tilde{\alpha})^{-1}(\Lambda_{\per}(\lambda) + \alpha - \Tilde{\alpha}) \\
            &= \Tilde{\Tcal}_{\per}-\Tilde{\alpha} - \Lambda_{\per}(\lambda) - \alpha + \Tilde{\alpha}\\
            &=\underbrace{\Tilde{\Tcal}_{\per}-\Lambda_{\per}(\lambda)}_{=\Tcal_{\per}} - \alpha
        \end{aligned}
    \end{equation*}
    and $(\Tilde{\Tcal}_{\per}-\Tilde{\alpha})^{-1}$ is compact, it follows that
    $$(\Tcal_{\per}-\alpha)^{-1} = (1-M(\alpha;\lambda))^{-1}(\Tilde{\Tcal}_{\per}-\Tilde{\alpha})^{-1}$$
    is the compact resolvent of $\Tcal_{\per}$. In particular $(ii)$ holds.

    Clearly, $(i)$ and $(ii)$ can not be true simultaneously. Additionally, if the spectrum is not discrete, then if $(i)$ does not hold, it follows by the above that $(ii)$ holds and we arrive at a contradiction and so $(i)$ must hold. Then the resolvent is not defined for any $\alpha\in\C$ and so no compact resolvents exist.
\end{proof}
Now we can prove that the set of Floquet exponents is discrete.

\begin{lemma}\label{lem:discreteness-of-spectrum}
     Assume \ref{it:A.4}. Then the spectrum \(\sigma(\Tcal_{\per})\) of \(\Tcal_{\per}\) consists only of Floquet exponents and is discrete. Furthermore, the operator $\Tcal_{\per}$ has compact resolvent. 
\end{lemma}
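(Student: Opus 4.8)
The plan is to invoke the dichotomy of Lemma~\ref{lem:cond-comp-res}. If alternative $(ii)$ holds then we are essentially done, because a closed operator with compact resolvent has discrete spectrum consisting solely of eigenvalues of finite multiplicity, and these are by definition precisely the Floquet exponents; thus $\sigma(\Tcal_{\per})=\sigma_p(\Tcal_{\per})$ is discrete and the resolvent is compact. Consequently the entire content of the lemma reduces to ruling out alternative $(i)$, i.e.\ to exhibiting a single $\alpha\in\C$ that is \emph{not} an eigenvalue of $\Tcal_{\per}$.

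The natural candidate is $\alpha=0$, and I claim it is excluded precisely by assumption~\ref{it:A.4}. Suppose, towards a contradiction, that $0\in\sigma_p(\Tcal_{\per})$, so that $\Tcal_{\per}\Ucal_0=0$ for some $\Ucal_0\in D(\Tcal_{\per})\setminus\{0\}$. By definition of $D(\Tcal_{\per})$ the function $\Ucal_0$ is $p$-periodic in $z$; extended periodically to $\R$ it solves the system at infinity $\Ucal'=\Bcal(z;\lambda,A_{\per})\Ucal$. Since $\Xcal_{\beta}$ is a vector space, assumption~\ref{it:A.1} gives $A_{\per}=A_0-(A_0-A_{\per})\in A_0+\Xcal_{\beta}$, so Lemma~\ref{lem:equivalence-spatial-dynamics} applies with potential $A_{\per}$ (after the standard elliptic bootstrap used there): the first component $u_0$ of $\Ucal_0$ lies in $H^2_{\mathrm{loc}}(\R\times S^1)$, is $p$-periodic in $z$, and solves $L_{A_{\per}}u_0=\lambda u_0$. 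If $u_0\equiv 0$ then $\Ucal_0=(u_0,u_0')^T=0$, contradicting $\Ucal_0\neq 0$; hence $u_0$ is a nonzero eigenfunction of $L_{A_{\per}}$ on $L^2_{\per}([0,p]\times S^1)$, so that $\lambda\in\sigma_{\per}(L_{A_{\per}})$. For $\lambda=\lambda_0$ this contradicts~\ref{it:A.4}; and since $L_{A_{\per}}$ on the compact torus $[0,p]\times S^1$ is a second-order elliptic operator on a closed manifold, it has compact resolvent, so $\sigma_{\per}(L_{A_{\per}})$ is discrete and hence closed, and the exclusion of $\alpha=0$ in fact persists for all $\lambda$ in a neighborhood of $\lambda_0$. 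Therefore alternative $(i)$ fails, $(ii)$ holds, and the lemma follows.

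I do not expect a genuine obstacle here: the machinery of Lemmata~\ref{lem:per-ref-op-spec} and~\ref{lem:cond-comp-res} does the heavy lifting, and the only point that needs a little care is the translation of ``$\alpha=0$ is a Floquet exponent of $\Tcal$'' into ``$\lambda_0\in\sigma_{\per}(L_{A_{\per}})$'', which relies on Lemma~\ref{lem:equivalence-spatial-dynamics} and on checking that the $p$-periodicity and the $H^2_{\mathrm{loc}}$-regularity of $\Ucal_0$ pass to its first component $u_0$.
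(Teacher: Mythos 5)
Your proof is correct and takes essentially the same route as the paper's: both arguments reduce the lemma to exhibiting a single $\alpha\in\C$ for which $\Tcal_{\per}-\alpha$ is injective, both choose $\alpha=0$, where the reduced scalar equation becomes $(L_{A_{\per}}-\lambda)u=0$ on $L^2_{\per}([0,p]\times S^1)$, and both invoke \ref{it:A.4} to exclude a nontrivial periodic solution before concluding via the dichotomy of Lemma~\ref{lem:cond-comp-res}. The only difference is that the paper additionally routes the argument through the analytic Fredholm theorem applied to $K(\alpha)=-(L_{A_{\per}}-\lambda)^{-1}(2\alpha\partial_z-\alpha^2 I)$, a step that is redundant once Lemma~\ref{lem:cond-comp-res} is available, so your more direct verification at $\alpha=0$ is a legitimate streamlining.
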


\begin{proof}
    As can be noted in the proof of Lemma \ref{lem:cond-comp-res}, it is enough to prove that there is an $\alpha\in\C$ such that $\Tcal_{\per}\Ucal=\alpha \Ucal$ only has the trivial solution or, in other words, that the operator $\Tcal_{\per}-\alpha I$ is invertible for some $\alpha\in\C$. Consider the equation
    \begin{equation*}
        (\Tcal_{\per}-\alpha I)\Ucal=0,
    \end{equation*}
    or more explicitly
    \begin{equation*}    
    \dfrac{d}{dz}\Ucal-\begin{pmatrix}
        \alpha & 1 \\
        -\big(\partial + iA_{\per}(z,\cdot)\big)^2-\lambda & \alpha
    \end{pmatrix}\Ucal=0.
    \end{equation*}
For this to hold, we must have that
\begin{equation*}
    \begin{cases}
    \partial_z u_1=\alpha u_1 + u_2 \\
    \partial_z u_2 = -\big(\partial_{\phi} + iA_{\per}(z,\phi)\big)^2u_1 -\lambda u_1+ \alpha u_2.
    \end{cases}
\end{equation*}
In particular, from the first equation we can see that it must hold that $u_2=\partial_z u_1-\alpha u_1$. Substituting this into the second equation gives us that 
\begin{equation}
\underbrace{-\partial_z^2u-\big(\partial_{\varphi}+A_{\per}(z,\varphi)\big)^2u-\lambda u}_{=(L_{A_{\per}}-\lambda I)u}+2\alpha u_z-\alpha^2u = 0.
\label{eq:alphaeq}
\end{equation}
Note that the operator $L_{A_{\per}}$ is acting on the Hilbert space $L_{\per}^2([0,p]\times S^1)$ with periodic boundary conditions, and therefore is self-adjoint and has a compact resolvent for every $\lambda\notin\sigma_{per}(L_{A_{\per}})$. Moreover, since we assumed \ref{it:A.4}, it follows that  $L_{A_{\per}}-\lambda$ is invertible on $L_{\per}^2([0,p]\times S^1)$ and that the inverse is a compact operator. Thus, we may equivalently write \eqref{eq:alphaeq} as 
\begin{equation*}
\big(I+(L_{A_{\per}}-\lambda)^{-1}(2\alpha\partial_z - \alpha^2I)\big)u=0.
    \label{eq:alphaeq2}
\end{equation*}
We define the operator-valued function 
\begin{equation*}
    \C \ni\alpha \mapsto K(\alpha)\in \mathcal{L}\big(L_{\per}^2([0,p]\times S^1)\big),
\end{equation*}
where
\begin{equation*}
    K(\alpha)=-(L_{A_{\per}}-\lambda)^{-1}(2\alpha\partial_z - \alpha^2I)
\end{equation*}
is analytic on all of $\C$. Moreover, keeping in mind that the operator $(2\alpha\partial_z-\alpha^2I)$ is relatively bounded with respect to $L_{\per}$, we clearly have that $K(\alpha)$ is compact for each $\alpha\in \C$. Thus, the existence of $(\Tcal_{\per}-\alpha I)^{-1}$ for some $\alpha\in\C$ is equivalent to the existence of $\big(I-K(\alpha)\big)^{-1}$.
According to the Analytic Fredholm theorem \cite[Thm.~8.92]{renardy2004}, we have that either
\begin{enumerate}
    \item $\big(I-K(\alpha)\big)^{-1}$ exists for no $\alpha\in \C$, or
    \item $\big(I-K(\alpha)\big)^{-1}$ exists for every $\alpha\in \C\setminus S$, where $S$ is a discrete set in $\C$. In this case the function $\alpha\mapsto\big(I-K(\alpha)\big)^{-1}$ is analytic on $\C\setminus S$, and if $\alpha\in S$, then $K(\alpha)u=u$ has a finite-dimensional family of solutions.
\end{enumerate}
It is clear that we end up in the second case, as $\big(I-K(\alpha)\big)^{-1}$ exists for $\alpha=0$. This concludes the proof.
\end{proof}
%\textcolor{magenta}{Maybe comment here on why it is that we need the below result? I do not remember...}
\begin{lemma}
    Assume \ref{it:A.4}. Then \(\alpha = 0\) is not a Floquet exponent of \(\Tcal\).
\end{lemma}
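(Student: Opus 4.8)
The plan is to show that $\alpha=0\in\sigma(\Tcal_{\per})$ would force $\lambda_0\in\sigma_{\per}(L_{A_{\per}})$, contradicting assumption \ref{it:A.4}. By Lemma~\ref{lem:discreteness-of-spectrum}, the spectrum of $\Tcal_{\per}$ consists only of Floquet exponents, so $\alpha=0$ being a Floquet exponent is equivalent to $\alpha=0\in\sigma(\Tcal_{\per})$, which in turn (tracing through the proof of that lemma) is equivalent to the non-invertibility of $I-K(0)$. But $K(0)=0$, so $I-K(0)=I$ is invertible; hence $0\notin\sigma(\Tcal_{\per})$.

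More concretely and without appealing to the abstract machinery, I would argue directly. Suppose $\alpha=0$ were a Floquet exponent, i.e. there is a nonzero $\Ucal\in D(\Tcal_{\per})$ with $\Tcal_{\per}\Ucal=0$. Writing $\Ucal=(u_1,u_2)^T$ and unpacking the system exactly as in the proof of Lemma~\ref{lem:discreteness-of-spectrum} with $\alpha=0$, the first component equation gives $u_2=\partial_z u_1$, and substituting into the second yields
\begin{equation*}
    -\partial_z^2 u_1 - \bigl(\partial_\phi + iA_{\per}(z,\phi)\bigr)^2 u_1 - \lambda_0 u_1 = 0,
\end{equation*}
that is, $(L_{A_{\per}}-\lambda_0)u_1=0$ with $u_1\in H^2_{\per}([0,p]\times S^1)$ nonzero (nonzero because $\Ucal\neq 0$ and $u_2$ is determined by $u_1$; if $u_1\equiv 0$ then $u_2\equiv 0$ too). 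Thus $\lambda_0$ is an eigenvalue of the periodic operator $L_{A_{\per}}$ on $L^2_{\per}([0,p]\times S^1)$, i.e. $\lambda_0\in\sigma_{\per}(L_{A_{\per}})$, contradicting the last sentence of \ref{it:A.4}.

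There is essentially no obstacle here: the statement is a short corollary of the computation already carried out in the previous lemma, specialized to $\alpha=0$. The only point requiring a word of care is the claim that the solution $\Ucal$ cannot be trivial once $u_1$ is nontrivial and vice versa, which is immediate from the relation $u_2=\partial_z u_1$ coming from the first row of the first-order system; and the identification of the resulting scalar equation with the periodic eigenvalue equation for $L_{A_{\per}}$, which is exactly the reduction displayed in \eqref{eq:alphaeq} with $\alpha=0$.
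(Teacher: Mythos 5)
Your proposal is correct and its main argument (reducing $\Tcal_{\per}\Ucal=0$ via $u_2=\partial_z u_1$ to $(L_{A_{\per}}-\lambda)u_1=0$ on the periodic space and invoking \ref{it:A.4}) is essentially identical to the paper's proof; the shortcut via $K(0)=0$ is also fine and is implicitly what the paper uses at the end of Lemma~\ref{lem:discreteness-of-spectrum}. The only small difference is that the paper additionally remarks that, by discreteness of $\sigma_{\per}(L_{A_{\per}})$, the conclusion holds for all $\lambda$ in a neighborhood of $\lambda_0$ (which is how the lemma is later used), whereas your argument as written treats only $\lambda=\lambda_0$.
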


\begin{proof}
    To show this, we first prove that $\alpha=0$ is a Floquet exponent if and only if $\lambda\in\sigma_{\per}(L_{A_{\per}})$.

    If $\alpha=0$ is a Floquet exponent, then there exists a non-trivial $\Ucal_0$ such that $\Tcal_{\per}\Ucal_0=0$. This means that, setting $\Ucal_0=(u_0,v_0)^T$, we have
    \begin{equation*}
        \begin{aligned}
            \begin{cases}
                u_0'=v_0 \\
                v_0' = -\big(\partial + iA(z,\cdot)\big)^2u_0 - \lambda u_0
            \end{cases} &\iff \begin{cases}
                v_0=u_0' \\
                u_0'' = -\big(\partial + iA(z,\cdot)\big)^2u_0 - \lambda u_0
            \end{cases} \\
            &\iff \begin{cases}
                v_0=u_0' \\
                L_{A_{\per}}u_0 = \lambda u_0.
            \end{cases}
        \end{aligned}
    \end{equation*}
    The other direction is trivial. Now, since we assumed in \ref{it:A.4} that $\lambda_0\notin\sigma_{\per}(L_{A_{\per}})$, the result follows by the discreteness of $\sigma_{\per}(L_{A_{\per}})$ for $\lambda$ close to $\lambda_0$.
\end{proof}

\begin{lemma}\label{lem:existance-of-purely-imaginary-floquet-exp}
    If $\lambda\in\sigma(L_{A_{\per}})$, then there exists a purely imaginary Floquet exponent $\alpha\in i\R$ of $\Tcal$.
\end{lemma}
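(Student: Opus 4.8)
The plan is to connect the spectrum of the periodic operator $L_{A_{\per}}$ on $L^2_{\per}([0,p]\times S^1)$ with the Floquet exponents of $\Tcal$ via Floquet--Bloch theory, exploiting the fact that $\lambda \in \sigma(L_{A_{\per}})$ means $\lambda$ lies in some spectral band. First I would recall that, since $L_{A_{\per}}$ is a $z$-periodic elliptic operator on $\R\times S^1$, its spectrum decomposes as $\sigma(L_{A_{\per}}) = \bigcup_{\theta\in[0,2\pi/p)} \sigma(L_{A_{\per}}^{\theta})$, where $L_{A_{\per}}^{\theta}$ is the Bloch fiber operator acting on $\theta$-quasiperiodic functions, i.e.\ functions $u$ with $u(z+p,\phi)=e^{i\theta}u(z,\phi)$; each fiber has compact resolvent, so each $\sigma(L_{A_{\per}}^{\theta})$ is a discrete set of band functions. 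Hence $\lambda\in\sigma(L_{A_{\per}})$ implies there is a $\theta_0$ and a nontrivial $\theta_0$-quasiperiodic function $u$ with $L_{A_{\per}}u=\lambda u$.

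Next I would feed such a Bloch solution into the spatial dynamics formulation. By the same elementary substitution used in the proof of Lemma~\ref{lem:discreteness-of-spectrum} (setting $u_2 = u_1'$), a solution $u$ of $L_{A_{\per}}u=\lambda u$ on $\R\times S^1$ corresponds exactly to a solution $\Ucal=(u,u')^T$ of \eqref{eq:sys-at-inf}. If $u$ is $\theta_0$-quasiperiodic with $u(z+p,\phi)=e^{i\theta_0}u(z,\phi)$, then $\Ucal(z+p)=e^{i\theta_0}\Ucal(z)$, so $\Ucal(z)=e^{-\alpha z}\Ucal_\alpha(z)$ with $\Ucal_\alpha$ $p$-periodic and $e^{-\alpha p}=e^{i\theta_0}$, i.e.\ $\alpha = -i\theta_0/p \in i\R$ (mod $2\pi i/p$). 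This exhibits $\Ucal_\alpha$ as a nonzero element of $D(\Tcal_{\per})$ with $\Tcal_{\per}\Ucal_\alpha=\alpha\Ucal_\alpha$ and $\alpha$ purely imaginary; by Lemma~\ref{lem:discreteness-of-spectrum} the spectrum of $\Tcal_{\per}$ consists only of such Floquet exponents, so $\alpha\in i\R$ is the desired purely imaginary Floquet exponent of $\Tcal$. I would also note the reverse correspondence to make the equivalence clean: a purely imaginary Floquet exponent produces, via $u(z)=e^{-\alpha z}\Ucal_\alpha^{(1)}(z)$, a bounded (indeed Bloch-wave) solution of $L_{A_{\per}}u=\lambda u$.

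The main obstacle is making the Floquet--Bloch decomposition of $L_{A_{\per}}$ rigorous in this cylindrical, magnetic setting and carefully matching the two notions of ``Floquet exponent'': the one coming from the self-adjoint operator $L_{A_{\per}}$ (Bloch quasimomenta $\theta$) and the one coming from the first-order system operator $\Tcal_{\per}$ (spectrum of the monodromy / the exponents $\alpha$). One must check that quasiperiodicity with multiplier $e^{i\theta_0}$ for $u$ transfers to the vector $\Ucal$ and hence to the right multiplier for the monodromy of \eqref{eq:sys-at-inf}, and that the function $\Ucal_\alpha(z)=e^{\alpha z}\Ucal(z)$ genuinely lies in the domain $D(\Tcal_{\per}) = H^1_{\per}([0,p];X)\cap L^2_{\per}([0,p];D(\Bcal))$, which follows from elliptic regularity for the Bloch solution $u$. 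A secondary subtlety is the case distinction: whether $\lambda$ is an embedded eigenvalue of $L_{A_{\per}}$ or merely in its continuous spectrum, the argument is the same at the level of band functions, since any $\lambda$ in a band is attained by some Bloch parameter $\theta_0$; so this dichotomy, which complicated the commented-out approach via solvability of $\Tcal\Ucal=g$, is actually sidestepped entirely by working fiberwise. I would close by remarking that the discreteness in Lemma~\ref{lem:discreteness-of-spectrum} guarantees there are only finitely many such $\alpha$ in any bounded horizontal strip, which is what is needed later for the exponential dichotomy construction.
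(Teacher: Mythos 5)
Your proposal is correct and follows essentially the same route as the paper: both use the Floquet--Bloch band structure of $\sigma(L_{A_{\per}})$ to produce a quasiperiodic Bloch solution $u$ with multiplier $e^{i\theta_0 p}$, and then pass through the spatial-dynamics equivalence to identify $\alpha=\pm i\theta_0$ (your normalization differs only by a sign and a factor of $p$ in the convention for $\theta$) as a purely imaginary eigenvalue of $\Tcal_{\per}$. The paper states this more tersely, citing \cite[Sec.~3.6]{plum2011} for the band decomposition, whereas you spell out the transfer of quasiperiodicity from $u$ to $\Ucal=(u,u')^T$ and the domain/regularity check; no gap.
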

\begin{proof}
It is well known that since $L_{A_{\per}}=L_{A_{\per}}(D)$ is a periodic elliptic differential operator, then the spectrum of $L_{A_{\per}}$ is a union of closed intervals, i.e.,
\[\sigma(L_{A_{\per}})=\bigcup_{k\in\N} I_k,\]
where $I_k=\{\lambda_k(\theta) \,: \, \theta\in [-\pi/p,\pi/p]\}$, and $\lambda_k(\theta)$ are the eigenvalues corresponding to the eigenvalue problem on the fundamental domain of periodicity with semi-periodic boundary conditions
\begin{equation*}
    \begin{aligned}
        &L_{A_{\per}}(D)u=\lambda u, \\
        &u(z+p)=e^{i\theta p}u(z).
    \end{aligned}
\end{equation*}
Or, equivalently they can be seen as eigenvalues corresponding to a transformed eigenvalue problem on the fundamental domain of periodicity with periodic boundary conditions
\begin{equation*}
    \begin{aligned}
        &L_{A_{\per}}(D+i\theta)u=\lambda u, \\
        &u(z+p)=u(z).
    \end{aligned}
\end{equation*}
Clearly, if $\lambda\in\sigma(L_{A_{\per}})$, there must exist a $k_0\in\N$ and $\theta_0\in [-\pi/p,\pi/p]$ such that $\lambda=\lambda_{k_0}(\theta_0)$. Thus, it follows by the equivalence of formulations that $\alpha=i\theta_0$ is a Floquet exponent. For more details, see \cite[Sec. 3.6]{plum2011}.
\end{proof}

Next, as $\sigma(\Tcal_{\per})$ is discrete by Lemma \ref{lem:discreteness-of-spectrum}, we may define a continuous family of spectral projection operators $P(z)$, $z\in \R$, for $\Tcal_{\per}$ as in \cite[Sec. 2.4]{mielke1996}. Note that if $\alpha \in \sigma(\Tcal_{\per})\cap i\R$, then also $\alpha+2\tfrac{\pi}{p}i\in \sigma(\Tcal_{\per})$ and the spectral projection $P(0)$ onto $\sigma(\Tcal_{\per})\cap i\R$ may be written as
\begin{equation}
    P(0) \Ucal = \sum_{k=1}^{N} \langle \Ucal, \psi_k \rangle \phi_k,
    \label{eq:spect-proj}
\end{equation}
where $\phi_k$ are eigenfunctions of $\Tcal_{\per}$ and $\psi_k$ are eigenfunctions of $\Tcal_{\per}^*$. We are now able to define
\begin{equation}
    X^c = \mathrm{Ran}\,P(0) = \operatorname{span}\{\phi_1,\ldots,\phi_N\}
    \label{eq:center-man-def}
\end{equation}
to be the central part of $\Tcal_{\per}$.

\begin{proposition}
    Assume \ref{it:A.4}. Then \(\dim X^c\) is positive and even.
\end{proposition}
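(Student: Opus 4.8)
The plan is to establish two separate facts: first, that $X^c$ is nontrivial (so $\dim X^c > 0$), and second, that it is even-dimensional, by exhibiting a symmetry of the spectrum of $\Tcal_{\per}$ on the imaginary axis. For the first point, I would recall that $\lambda_0 \in \sigma(L_{A_0})$ (it is even an eigenvalue by \ref{it:A.3}), and by Weyl's theorem the essential spectrum of $L_{A_0}$ coincides with that of $L_{A_\per}$; since $\lambda_0$ is embedded in the continuous spectrum, $\lambda_0 \in \sigma(L_{A_\per})$. Then Lemma~\ref{lem:existance-of-purely-imaginary-floquet-exp} gives a purely imaginary Floquet exponent $\alpha \in i\R$ of $\Tcal$, i.e. $\sigma(\Tcal_{\per}) \cap i\R \neq \emptyset$. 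By the discreteness of $\sigma(\Tcal_{\per})$ (Lemma~\ref{lem:discreteness-of-spectrum}), the spectral projection $P(0)$ onto this part is well-defined and nonzero, hence $N \geq 1$ and $\dim X^c = N > 0$.

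For the parity statement, I would exploit a symmetry relating Floquet exponents $\alpha$ and $-\bar\alpha$. The operator $\Tcal_{\per} = \frac{d}{dz} - \Acal - \Lambda_{\per}(z;\lambda)$ has a structure that, under complex conjugation together with the reflection $z \mapsto -z$ (or via the formal adjoint), sends $\alpha$ to $-\bar\alpha$: concretely, if $\Ucal_\alpha$ solves $\Tcal_{\per}\Ucal_\alpha = \alpha \Ucal_\alpha$, one checks that an appropriately transformed function solves the eigenvalue equation with exponent $-\bar\alpha$. On the imaginary axis $\alpha = i t$ this symmetry becomes $\alpha \mapsto -\bar\alpha = \alpha$, which is not yet enough; so instead I would use the combined symmetry coming from the fact that $L_{A_\per}$ is real up to the magnetic gauge, pairing $\alpha$ with $-\alpha$. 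Indeed, $\Ucal(z) \mapsto (I, -I)$-type reflection $z \mapsto -z$ conjugates $\Tcal_{\per}$ at exponent $\alpha$ to $\Tcal_{\per}$ at exponent $-\alpha$ (this is visible already in equation~\eqref{eq:alphaeq}, which is even in $\alpha$ only up to the first-order term $2\alpha u_z$, so reflection flips the sign of that term), giving a bijection between the eigenspace at $\alpha$ and that at $-\alpha$ of the same dimension. Combined with the $2\pi i/p$-periodicity of $\sigma(\Tcal_{\per})$ noted before \eqref{eq:spect-proj}, the purely imaginary spectrum is symmetric about $0$ modulo $\tfrac{2\pi}{p}i$.

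The delicate point — and the main obstacle — is handling the exponent $\alpha = 0$, which is its own image under $\alpha \mapsto -\alpha$ and would spoil the pairing if it contributed to $X^c$. This is exactly why the preceding lemma showing that $\alpha = 0$ is \emph{not} a Floquet exponent (under \ref{it:A.4}, since $\lambda_0 \notin \sigma_\per(L_{A_\per})$) is invoked. Likewise one must check that the half-period shift $\alpha \mapsto \alpha + \tfrac{\pi}{p}i$ does not produce fixed points that break the count; since the relevant symmetry is $\alpha \mapsto -\alpha$ and its fixed points on $i\R \bmod \tfrac{2\pi}{p}i$ are $\alpha = 0$ and $\alpha = \tfrac{\pi}{p}i$, I would need to rule out (or separately pair) the exponent $\alpha = \tfrac{\pi}{p}i$ — but this corresponds to $\theta_0 = \pi/p$, an \emph{edge} of the Brillouin zone, hence a spectral edge of $\sigma_\per(L_{A_\per})$, again excluded by \ref{it:A.4}. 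Thus every purely imaginary Floquet exponent in a fundamental strip pairs off into $\{\alpha, -\alpha\}$ with $\alpha \neq -\alpha$, and since the algebraic multiplicities (dimensions of the corresponding summands in \eqref{eq:spect-proj}) agree across the pair, $N = \dim X^c$ is even. The one routine check I would not belabor is that the reflection symmetry preserves algebraic, not merely geometric, multiplicity — this follows because the transformation is a linear isomorphism intertwining $\Tcal_{\per} - \alpha$ with $\Tcal_{\per} + \alpha$ on the respective generalized eigenspaces.
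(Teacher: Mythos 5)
Your first half (positivity of $\dim X^c$) is exactly the paper's argument, and it is correct. For evenness, your overall strategy---pair each purely imaginary Floquet exponent $\alpha$ with $-\alpha$ via a reversibility of the system and then dispose of the fixed points of $\alpha\mapsto-\alpha$ on $i\R \bmod \tfrac{2\pi}{p}i$---is the same idea the paper uses, phrased there through the monodromy matrix $M=\Phi^c(p,0)$ of the reduced central system and the conjugacy $M^{-1}=R^cMR^c$. The genuine gap is your treatment of the second fixed point, $\alpha=\tfrac{\pi}{p}i$ (Floquet multiplier $\rho=-1$). You claim it is excluded by \ref{it:A.4} because $\theta_0=\pi/p$ is an edge of the Brillouin zone and ``hence a spectral edge of $\sigma_{\per}(L_{A_{\per}})$.'' Neither implication holds: $\sigma_{\per}(L_{A_{\per}})$ in \ref{it:A.4} is the \emph{periodic} ($\theta=0$) spectrum, not the anti-periodic one, and a band function $\lambda_k(\theta)$ need not attain an extremum at $\theta=\pi/p$ --- for magnetic operators time-reversal symmetry is broken, so $\lambda_k(\theta)\neq\lambda_k(-\theta)$ in general and band edges need not sit at symmetry points of the Brillouin zone. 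Thus $\lambda_0=\lambda_k(\pi/p)$ is perfectly compatible with \ref{it:A.4}, the exponent $\alpha=\tfrac{\pi}{p}i$ can occur, and your count breaks down there. The paper closes this case by a different device: since $\det M=1$, each pair $\{\rho,\rho^{-1}\}$ contributes $1$ to the product and $\rho=+1$ is excluded (your $\alpha=0$ case), so the multiplicity of $\rho=-1$ is forced to be even. You need this determinant argument (or an equivalent one); as written, your proof would wrongly conclude that $\rho=-1$ cannot be a central multiplier.

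A secondary point: your justification of the pairing symmetry itself is loose. The naive reflection $z\mapsto-z$ replaces $A_{\per}(z,\cdot)$ by $A_{\per}(-z,\cdot)$ and therefore does not map the system to itself; one has to work with the involution $R=\begin{pmatrix} r & 0\\ 0 & -r\end{pmatrix}$ and verify the intertwining relation $R\,\Bcal(-z;\lambda,A_{\per})=-\Bcal(z;\lambda,A_{\per})\,R$, which yields $\Phi^c(-z,-z_0)=R^c\Phi^c(z,z_0)R^c$ and hence the conjugacy of $M$ with $M^{-1}$. Your final remark about algebraic versus geometric multiplicity is then automatic from this conjugacy, so that part is fine once the symmetry is set up properly.
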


\begin{proof}
    That the dimension is positive follows directly from Lemma \ref{lem:existance-of-purely-imaginary-floquet-exp} since $\lambda_0\in\sigma_{\mathrm{ess}}(L_{A_0})=\sigma_{\mathrm{ess}}(L_{A_{\per}})\subset \sigma(L_{A_{\per}})$, and the finiteness follows from \eqref{eq:spect-proj}--\eqref{eq:center-man-def}.

    We can use the projection to decompose our eigenvalue problem into two parts. One finite-dimensional, taking projections onto $X^c$, and one infinite-dimensional part (for more on this, see \cite[Thm.~6.17]{kato1995}).

Now, let $R:X\to X$ be the involution defined by 
\begin{equation*}    
R:\Ucal\mapsto \begin{pmatrix}
    r & 0 \\
    0 & -r
\end{pmatrix}\Ucal,
\end{equation*}
where $(rf)(z)= f(-z)$. It holds that $R^2 = I$, and if we define
\begin{equation*}
    \ell_{A_{\per}}(z) = -(\partial+iA_{\per}(z,\cdot))^2-\lambda I,
\end{equation*} we have that
\begin{equation*}
    \begin{aligned}
        \Bcal(z;\lambda,A_{\per})R = \begin{pmatrix}
            0 & 1 \\
            \ell_{A_{\per}}(z) & 0
        \end{pmatrix}\begin{pmatrix}
            r & 0 \\
            0 & -r 
        \end{pmatrix} = \begin{pmatrix}
            0 & -r \\
            \ell_{A_{\per}}(z)r & 0
        \end{pmatrix}.
    \end{aligned}
\end{equation*}
Furthermore,
\begin{equation*}
    \begin{aligned}
        R\Bcal(-z;\lambda,A_{\per}) &= \begin{pmatrix}
            r & 0 \\
            0 & -r
        \end{pmatrix}\begin{pmatrix}
            0 & 1 \\
            \ell_{A_{\per}}(-z) & 0 
        \end{pmatrix} = \begin{pmatrix}
            0 & r \\
            -r\ell_{A_{\per}}(-z) & 0
        \end{pmatrix} \\
        &= \begin{pmatrix}
            0 & r \\
            -\ell_{A_{\per}}(z)r & 0
        \end{pmatrix} = -\Bcal(z;\lambda,A_{\per})R.
    \end{aligned}
\end{equation*}

Now we have, following the ideas of Remark 4.2 in \cite{mielke1996}, that $\Phi^c(z,z_0)$ is the fundamental matrix of the reduced linear problem. By the \say{reversibility} above, we know that $\Phi^c(-z,-z_0) = R^c\Phi^c(z,z_0)R^c$, in addition to the standard properties $\Phi^c(z_0,z)^{-1} = \Phi^c(z,z_0) = \Phi^c(z+p,z_0+p)$. As a consequence, the monodromy matrix $M = \Phi^c(p,0)$ satisfies $M = \Phi^c(0,p)=\Phi^c(-p,0)=R^cMR^c$. Since $\det \Phi^c(z,z_0)>0$ and $\det R^c = 1,$ we find that $\det M = 1,$ so the multiplicity of the eigenvalue $-1$ must be even and all the other eigenvalues (Floquet multipliers) are on the unit circle and appear either in pairs or are $+1$. However, since $\alpha=0$ is not a Floquet exponent, $+1$ is not an eigenvalue of the monodromy matrix, and the result follows. 
\end{proof}

\subsection{Exponential dichotomies}
Next, we introduce the concept of exponential dichotomies, which are essential to our approach for defining evolution operators for the spatial dynamics formulation.
\begin{definition}[Exponential dichotomy]
    Let $J=\R_-$, $\R_+$ or $\R$. An ODE system 
    \begin{equation}\label{eq:system-exp-dich}
        \Ucal' = \Bcal(z)\Ucal,
    \end{equation}
    where $\Ucal(z)\in X$ and $\Bcal(z): X\to X$, is said to possess an exponential dichotomy on $J$ if there exists a family of projections $P(z) \in \Lcal(X)$, $z\in J$, such that the projections satisfy $P(\cdot)\Ucal \in C(J; X)$ for any $\Ucal\in X$, and there exist constants $K$ and $\kappa^s < 0 < \kappa^u$  with the following properties:
    \begin{enumerate}[(i)]
        \item For any $z_0\in J$ and $\Ucal\in X$ there exists a unique solution $\Phi^s(z, z_0)\Ucal$ of \eqref{eq:system-exp-dich} defined for $z \geq z_0$, $z, z_0 \in J$, such that $\Phi^s(z_0, z_0)\Ucal = P(z_0)U$ and
        \begin{equation*}
            \lVert \Phi^s(z,z_0)\Ucal\lVert_X \leq Ke^{\kappa^s(z-z_0)}\lVert\Ucal\rVert_X,
        \end{equation*}
        for every $z \geq z_0$, $z, z_0 \in J$.
        \item For any $z_0\in J$ and $\Ucal\in X$, there exists a unique solution $\Phi^u(z,z_0)\Ucal$ defined for $z\leq z_0$, $z,z_0\in J$, such that $\Phi^u(z_0,z_0)\Ucal = (I-P(z_0))\Ucal$ and
        \begin{equation*}
            \lVert \Phi^u(z,z_0)\Ucal\rVert_X\leq Ke^{\kappa^u(z-z_0)}\lVert \Ucal \rVert_x,
        \end{equation*}
        for every $z\leq z_0$, $z,z_0\in J$.
        \item The solutions $\Phi^s(z,z_0)\Ucal$ and $\Phi^u(z,z_0)\Ucal$ satisfy 
        \begin{equation*}
            \begin{aligned}
                \Phi^s(z,z_0)\Ucal &\in \ran P(z) \quad \text{for all } z\geq z_0, \; z,z_0\in J, \\
                \Phi^u(z,z_0)\Ucal &\in \ker P(z) \quad \text{for all } z\leq z_0, \; z,z_0\in J.
            \end{aligned}
        \end{equation*}
    \end{enumerate}
\end{definition}

As the center space $X^c$ was proven to be non-empty, the system at infinity clearly does not possess an exponential dichotomy.
Therefore, we must introduce a shift, $\pm\eta$, such that, for a new shifted system, none of the Floquet exponents are purely imaginary. This is true by the discreteness of the Floquet exponents.
\begin{equation}\label{eq:exponential-shift-periodic}
    \Vcal_{\pm}' = \big(\Bcal(z;\lambda, A_{\per}) \pm \eta I \big)\Vcal_{\pm}.
\end{equation}

\begin{lemma}\label{lem:exponential-shift-periodic}
    Assume \ref{it:A.4}. Then there exists \(\delta > 0\) such that for any \(\eta\in (0,\delta)\) the system \eqref{eq:exponential-shift-periodic} possesses an exponential dichotomy on \(\R\).
\end{lemma}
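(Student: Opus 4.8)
The plan is to derive the exponential dichotomy for the shifted system from the spectral properties of the shifted Floquet operator together with the general dichotomy machinery for periodic, ill-posed evolution equations (as in \cite[Sec.~2--3]{mielke1996}). First I would observe that the shifted system \eqref{eq:exponential-shift-periodic} is again a periodic, ill-posed ODE on $X$ whose associated Floquet operator is $\Tcal_{\per} \mp \eta I$; indeed, conjugating or simply translating the spectral parameter shows $\sigma(\Tcal_{\per}\mp\eta I) = \sigma(\Tcal_{\per}) \mp \eta$. By Lemma~\ref{lem:discreteness-of-spectrum} the set $\sigma(\Tcal_{\per})$ is discrete, so its intersection with any bounded vertical strip is finite; in particular there is a $\delta>0$ such that $\sigma(\Tcal_{\per})$ contains no Floquet exponents with real part in $(-\delta,0]$ or in $[0,\delta)$ other than possibly those on $i\R$. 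Consequently, for every $\eta\in(0,\delta)$ the shifted spectrum $\sigma(\Tcal_{\per})\mp\eta$ avoids the imaginary axis entirely: the finitely many purely imaginary Floquet exponents of $\Tcal_{\per}$ get pushed strictly to one side, and no other exponents can have been dragged onto $i\R$ because none lay within distance $\delta$ of it to begin with.

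Next I would invoke the standard correspondence between a spectral gap around the imaginary axis for the Floquet operator and the existence of an exponential dichotomy for the underlying evolution equation on $\R$. Concretely: since $\Tcal_{\per}\mp\eta I$ has compact resolvent (Lemma~\ref{lem:discreteness-of-spectrum}, the shift not affecting this) and no spectrum on $i\R$, one decomposes $L^2_{\per}([0,p];X)$ into the spectral subspaces corresponding to $\{\real\alpha<0\}$ and $\{\real\alpha>0\}$ via the Riesz projection along a vertical contour; because the exponents are discrete and $A_{\per}$ is $C^2$, the decaying/growing estimates $\|\Lambda_n\|\lesssim (1+|n|)^{-2}$ (the summability already used implicitly in the commented-out Lemma) ensure the associated family of projections $P_{\pm\eta}(z)$ is well-defined, $p$-periodic in $z$, strongly continuous, and that the solution operators $\Phi^s,\Phi^u$ restricted to its range/kernel satisfy the required exponential bounds with rates $\kappa^s = -(\eta' )<0<\eta'=\kappa^u$ for a suitable $\eta'>0$ depending on the size of the gap. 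This is exactly the content of \cite[Thm.~2.3 and Sec.~2.4]{mielke1996} transported to our setting; the only adaptation is that our coefficient operator $\Lambda_{\per}(z;\lambda)$ has the first-order term $-2iA_{\per}\partial$, which is relatively bounded with respect to $\Acal$ with relative bound zero, so all the Fredholm/compactness arguments used there go through verbatim.

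Finally I would assemble these pieces: choose $\delta>0$ from the gap argument of the first paragraph, and for $\eta\in(0,\delta)$ let $P(z) := P_{\eta}(z)$ be the periodic family of projections onto the stable spectral subspace of $\Tcal_{\per}-\eta I$ (equivalently, onto the strongly-stable subspace of \eqref{eq:exponential-shift-periodic}$_+$, with the unstable case handled symmetrically via the involution $R$ exactly as in the preceding proposition). Then properties (i)--(iii) of the definition of exponential dichotomy hold for \eqref{eq:exponential-shift-periodic} on $\R$ by the cited Floquet-dichotomy theorem, with uniform constants $K,\kappa^s,\kappa^u$ since everything is periodic. The main obstacle I anticipate is not the spectral-gap step — that is immediate from discreteness — but verifying carefully that the abstract dichotomy theory of \cite{mielke1996} applies to our non-self-adjoint, first-order-perturbed operator $\Acal+\Lambda_{\per}(z;\lambda)$ on the pair $X=H^1(S^1)\times L^2(S^1)$, $D(\Bcal)=H^2(S^1)\times H^1(S^1)$: one must check that $\Acal$ generates the relevant analytic-type estimates on the complexified/Fourier side, that the resolvent bounds along $i\R$ are uniform (which is where the spectral gap and compactness of $(\Acal-z)^{-1}$ combine), and that the perturbation $\Lambda_{\per}$ does not destroy these. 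I would present this verification as a short lemma citing \cite{mielke1996} for the structural results and filling in the relatively-bounded-perturbation estimate by hand.
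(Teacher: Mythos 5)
Your proposal is correct and follows essentially the same route as the paper: use the discreteness of $\sigma(\Tcal_{\per})$ (Lemma \ref{lem:discreteness-of-spectrum}) to obtain a gap $\delta(\lambda)=\min\{|\real\alpha| : \alpha\in\sigma(\Tcal_{\per}),\ \real\alpha\neq 0\}>0$, note that for $\eta\in(0,\delta)$ the shifted Floquet operator has no spectrum on $i\R$, and then invoke the standard Floquet--dichotomy correspondence of \cite{mielke1996} (the paper additionally routes the last step through the invertibility of $\Tcal_{\pm\eta}$ on $L^2(\R;X)$ and the construction in \cite[Lem.~5.5]{sandstede2001}, but this is the same machinery you describe via spectral projections).
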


\begin{proof}
    By Theorem 2.3 in \cite{mielke1996}, it follows that $\Tcal_{\pm\eta} \coloneqq \Tcal -(\pm \eta) I$ is invertible for all $\eta \in (0,\delta(\lambda))$, with 
    \begin{equation*}
        \delta(\lambda)=\min_{\substack{\alpha\in\sigma(\Tcal_{\per}(\lambda)) \\ \real(\alpha)\neq 0}}\lvert \real(\alpha)\rvert >0,
    \end{equation*}
    which exists and is well-defined by the discreteness of the spectrum.
    The rest follows from the proof of Lemma 5.5 in \cite{sandstede2001} and related results. The arguments apply verbatim with $\Tcal\coloneqq\Tcal_{\pm\eta}$ and $\Tcal_{\text{ref}}\coloneqq \dfrac{d}{dz} - \left(\Acal\pm \eta\right)$.
\end{proof}

Finally, we can now show that the full system, once it is shifted, also possesses an exponential dichotomy. Again, using the change of variables \(\Vcal_{\pm} = e^{\pm \eta z}\Ucal\), we obtain the shifted system

\begin{equation}\label{eq:exponential-shift}
    \Vcal_{\pm}' = \big(\Bcal(z;\lambda, A) \pm \eta I \big)\Vcal_{\pm}.
\end{equation}

Note that we may rewrite this as

\begin{equation}\label{eq:perturb-shift}
    \Vcal_{\pm}'=\big(\Bcal(z;\lambda,A_{\per}) \pm \eta + S(z;A)\big)\Vcal_{\pm},
\end{equation}
where 
\begin{equation*}
    S(z;A)=\begin{pmatrix}
    0 & 0 \\
    -2i\big(A-A_{\per})\partial + 2A_{\per}\big(A-A_{\per}\big) + \big(A-A_{\per}\big)^2 - i\big(A-A_{\per}\big)'_{\varphi} & 0 
\end{pmatrix},
\end{equation*}
or, equivalently
\begin{equation*}
     \Tcal_{\pm\eta}\Vcal_{\pm}=S(z;A)\Vcal_{\pm}.
\end{equation*}
\begin{lemma}\label{lem:exponential-shift}
    Assume that \ref{it:A.1}--\ref{it:A.4} are satisfied. Then there exists \(\delta = \delta(\lambda)\) such that for any \(\eta\in (0,\delta)\) the system \eqref{eq:exponential-shift} possesses exponential dichotomies on \(\R_+\) and \(\R_-\), respectively.
    Moreover, we may choose the dichotomies so that $\ker P(0;\lambda,A) = \ker P(0;\lambda_0,A_{per})$ for all $(\lambda,A)$ in a neighborhood of $(\lambda_0,A_0)$. The corresponding evolution operators are denoted by $\Psi^s(\cdot;\lambda,A)$ and $\Psi^{cu}(\cdot;\lambda,A)$ for $\Vcal_+$ on $\mathbb{R}_+$, etc. Moreover, the evolution operators depend smoothly on $\lambda$ and $A$.
\end{lemma}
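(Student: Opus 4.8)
The plan is to treat \eqref{eq:exponential-shift} as a relatively bounded perturbation of the periodic system \eqref{eq:exponential-shift-periodic}, for which Lemma \ref{lem:exponential-shift-periodic} already provides an exponential dichotomy on all of $\R$. First I would fix $\eta\in(0,\delta(\lambda))$ as in Lemma \ref{lem:exponential-shift-periodic}, so that $\Tcal_{\pm\eta}$ is boundedly invertible and the periodic evolution operators $\Phi^s_{\per},\Phi^{cu}_{\per}$ with projections $P_{\per}(z)$ are at hand. The perturbation $S(z;A)$ has, by assumption \ref{it:A.1} (i.e. $A-A_{\per}\in\Xcal_{\beta}$ with $\beta>1$), coefficients that decay algebraically like $(1+|z|)^{-\beta}$ and, crucially, the off-diagonal entry is only first order in $\partial$, hence relatively bounded with respect to $\Acal$ (equivalently, $S(z;A):D(\Bcal)\to X$ is bounded, uniformly in $z$, with norm $\to 0$ as $|z|\to\infty$).

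The key steps, in order: (1) record that on $\R_+$ (and symmetrically on $\R_-$) one has $\|S(z;A)\|_{D(\Bcal)\to X}\le C\|A-A_{\per}\|_\beta (1+|z|)^{-\beta}$, so this quantity is integrable over $\R_+$; (2) apply the standard roughness/persistence theorem for exponential dichotomies under asymptotically small (indeed integrably small) relatively bounded perturbations — this is exactly the situation treated in \cite{sandstede2001} (see Lemma 5.5 and the surrounding results, already invoked in the proof of Lemma \ref{lem:exponential-shift-periodic}) and, for the ODE/mild-solution formulation with unbounded $\Bcal$, one can alternatively set up the fixed-point equation for $\Psi^s$ via the variation-of-constants formula $\Psi^s(z,z_0)=\Phi^s_{\per}(z,z_0)+\int_{z_0}^z\Phi^s_{\per}(z,\zeta)S(\zeta;A)\Psi^s(\zeta,z_0)\dd\zeta-\int_z^\infty\Phi^{cu}_{\per}(z,\zeta)S(\zeta;A)\Psi^s(\zeta,z_0)\dd\zeta$ and contract in the space of exponentially weighted continuous functions, the integrability of $\|S(\zeta;A)\|$ ensuring the contraction constant is $<1$ after possibly shrinking the neighborhood $\Ocal$ of $A_0$; (3) the resulting projections $P(0;\lambda,A)$ satisfy $\ran P(0;\lambda,A)$ close to $\ran P_{\per}(0)$ but $\ker P(0;\lambda,A)$ is determined by the \emph{unstable} (backward) flow, and by the usual freedom in choosing the stable fibers at $z=0$ (any complement works) one normalizes $\ker P(0;\lambda,A)\equiv\ker P(0;\lambda_0,A_{\per})$ — concretely one builds the dichotomy on $\R_+$ from the genuine decaying solutions (which form $\ran P(0)$) and simply \emph{declares} the complementary subspace to be the fixed reference kernel, which is admissible since it is transverse to $\ran P(0;\lambda,A)$ for $(\lambda,A)$ near $(\lambda_0,A_{\per})$ by openness of transversality; (4) smoothness in $(\lambda,A)$ follows because the fixed-point map above depends (jointly) smoothly — in fact analytically in $\lambda$ and affinely/smoothly in $A$ through $S(z;A)$ — on the parameters, so the implicit function theorem in the Banach space of weighted trajectories yields smooth dependence of $\Psi^s,\Psi^{cu}$ and of $P(0;\lambda,A)$.

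The main obstacle I anticipate is step (2) in the unbounded-operator setting: $\Bcal(z;\lambda,A)$ is not bounded on $X$, the forward evolution $\Phi^s_{\per}$ is only a strongly continuous family (not a group), and $S(z;A)$ loses a derivative in $\phi$, so one must be careful that the variation-of-constants integrals land back in $D(\Bcal)$ and that the contraction is performed in the correct space — the cleanest route is precisely the one flagged in the proof of Lemma \ref{lem:exponential-shift-periodic}, namely to invoke the abstract machinery of \cite{sandstede2001} with $\Tcal:=\Tcal_{\pm\eta}$ and $\Tcal_{\mathrm{ref}}:=\tfrac{d}{dz}-(\Acal\pm\eta)$, checking that the hypotheses there (compact resolvent of $\Acal$, relative boundedness and asymptotic smallness of the perturbation) are met; this reduces the whole lemma to citing that framework. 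A secondary, more bookkeeping obstacle is arranging the \emph{simultaneous} normalization of $\ker P(0;\lambda,A)$ for \emph{all} nearby $(\lambda,A)$ rather than pointwise, but this is standard: transversality is an open condition, so a single fixed complement of $\ran P(0;\lambda_0,A_{\per})$ serves as the kernel throughout a sufficiently small neighborhood $\Ocal$, and one then propagates it by the flow to define $P(z;\lambda,A)$ for $z>0$.
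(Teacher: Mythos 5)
Your overall architecture (perturb off the periodic dichotomy of Lemma \ref{lem:exponential-shift-periodic}, set up a variation-of-constants fixed-point equation in exponentially weighted spaces, normalize the kernel by transversality, get smoothness from the implicit function theorem) matches the paper's, which follows the non-autonomous version of \cite{peterhof1997}. But your central step (2) has a genuine gap: the fixed-point map is \emph{not} a contraction, and no shrinking of the neighborhood $\Ocal$ of $A_0$ can make it one. The contraction constant is controlled by $\int_0^\infty\|S(\zeta;A)\|\,\dd\zeta$, which scales with $\|A-A_{\per}\|_\beta$, not with $\|A-A_0\|_\beta$; the base perturbation $A_0-A_{\per}$ is a \emph{fixed, nonzero} element of $\Xcal_\beta$ (assumption \ref{it:A.2} even forces it to be nonzero), so $S(z;A)$ is of fixed, possibly large, size near $z=0$. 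Integrability of $\|S(\cdot;A)\|$ gives finiteness of the relevant integral, not smallness. One could contract on a half-line $[z_*,\infty)$ where the algebraic decay makes $\sup_{z\ge z_*}\|S(z)\|$ small, but then one must transport the dichotomy back from $z_*$ to $0$, and since the system is ill-posed this backward extension is precisely the nontrivial point — it cannot be done by simply running the flow.

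The paper's proof supplies exactly the missing mechanism: it splits the integral operator $I_1$ into $S_1+K_1$, where $S_1$ involves only the tail $z\ge z_*$ and has small norm by the decay of $S$, while $K_1$ involves the compact interval $[0,z_*]$ (where $S$ is large) and is shown to be a \emph{compact} operator, using the compact embedding $D(\Bcal)\hookrightarrow X$ and Arzelà--Ascoli. Invertibility of $I+I_1+I_2$ then follows from Fredholm theory rather than from the contraction mapping principle, with injectivity supplied by the hypotheses of \cite{peterhof1997} (checked via \cite{mielke2002}). Your fallback of ``citing the abstract framework'' gestures at this, but without the smallness-plus-compactness decomposition and the Fredholm argument the proof does not close; your kernel-normalization and smoothness steps are fine and agree with the paper.
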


\begin{proof}
    The proof follows from the non-autonomus generalization of Theorem ~1 mentioned in \cite{peterhof1997}. In fact, the proof is almost exactly the same as the proof of this theorem, with $e^{-A_-(t-\tau)}P_-$ and $e^{A_+(t-\tau)}P_+$ replaced by the evolution operators of the unperturbed system \eqref{eq:exponential-shift-periodic} here denoted by $\Omega^s(z,z_0)$ and $\Omega^{cu}(z,z_0)$ for $+\eta$ and $\Omega^u(z,z_0)$ and $\Omega^{cs}(z,z_0)$ for $-\eta$. Following the strategy of \cite[Lem. ~3.2]{peterhof1997}, we define the operator $\widetilde{T}_0:\mathscr{X}_{z_0}^s\to \mathscr{X}_{z_0}^s$ of the form $\widetilde{T}_0= I + I_1+I_2$, where $I$ is the identity operator and $I_1$ and $I_2$ are the integral operators
    \begin{equation*}
        \begin{aligned}
            (I_1\Vcal^s)(z) &= -\int_0^z\Omega^s(z,\xi)S(\xi)\Vcal^s(\xi)\dd\xi \\
            (I_2\Vcal^s)(z) &= \int_z^{\infty}\Omega^{cu}(z,\sigma)S(\xi)\Vcal^s(\xi)\dd\xi
        \end{aligned}
    \end{equation*}
    and
    \begin{equation*}
        \mathscr{X}_{z_0}^s=\{\Vcal\in C^0\left([z_0,\infty),X\right)\, : \, \lvert \Vcal\rvert_{\mathscr{X}_{z_0}^s}\coloneqq \sup_{z\geq z_0}e^{\eta\lvert z-z_0\rvert}\lvert \Vcal(z)\rvert<\infty\}.
    \end{equation*}
    For any $z_*$ we may decompose $I_1=S_1+K_1$ according to
    \begin{equation*}
        (K_1\Vcal^s)(z)\coloneqq\begin{cases}
            -\int_0^z\Omega^s(z,\xi)S(\xi)\Vcal^s(\xi)\dd\xi, & z\leq z_*, \\
            -\Omega^s(z,z_*)\int_0^{z_*}\Omega^s(z_*,\xi)S(\xi)\Vcal^s(\xi)\dd\xi, & z\geq z_*,
        \end{cases}
    \end{equation*}
    \begin{equation*}
        (S_1\Vcal^2)(z) = \begin{cases}
            0, & z\leq z_*, \\
            -\int_{z_*}^z\Omega^s(z,\xi)S(\xi)\Vcal^s(\xi)\dd\xi, & z\geq z_*.
        \end{cases}
    \end{equation*}
    Now, we just have to prove that the operator $K_1:\mathscr{X}_{z_0}^s\to \mathscr{X}_{z_0}^s$ is compact and that $S_1$ has sufficiently small norm. Note that for large $z_*$ we have, just as in \cite{peterhof1997}, that
    \[\lVert S_1 \rVert_{\mathcal{L}(\mathscr{X}_{z_0}^s)}\leq C\sup_{z\geq z_*}\lVert S(z)\rVert_{\mathcal{L}(X)} \leq C\varepsilon.\]
    Since $\varepsilon$ can be chosen arbitrarily small for $z_*$ large enough, we are done and it remains now only to prove that $K_1$ is compact. To this end, we restrict $K_1V^s$ to the interval $[0,z_*]$. $K_1$ maps $\mathscr{X}_{z_0}^s$ continuously into $C^1\left([0,z_*],D(\Bcal)\right)\cap C\left([0,z_*],X\right)$ and since $D(\Bcal)$ is compactly embedded into $X$, it follows by Arzela--Ascoli that this is compactly embedded into $C^0([0,z_*], X)$. Thus, the operator $K_1$ is a compact operator as the composition of a compact and a bounded operator. The rest follows in a completely similar way as in \cite{peterhof1997}, after pointing out that also the condition (H5) of the paper \cite{peterhof1997} is fulfilled by Theorem 2.5 in \cite{mielke2002}. Smoothness of the exponential dichotomies with respect to the parameters $(\lambda,A)$ can be shown by applying the implicit function theorem in a neighborhood of a point $(\lambda_0,A_0)$. 
\end{proof}

Reversing the shift, we obtain evolution operators for the system \eqref{eq:eigenvalue-spatial-dynamics} \(\Phi^s\) and \(\Phi^{cu}\) on \(\R_+\) and \(\Phi^u\) and \(\Phi^{cs}\) on \(\R_-\) given by

\begin{equation*}
    \begin{split}
        \Phi^s(z,z_0;\lambda,A) = e^{-\eta(z-z_0)}\Psi^s(z,z_0;\lambda,A), \quad & \Phi^{cu}(z,z_0;\lambda,A) = e^{-\eta(z-z_0)}\Psi^{cu}(z,z_0;\lambda,A), \\
        \Phi^u(z,z_0;\lambda,A) = e^{\eta(z-z_0)}\Psi^u(z,z_0;\lambda,A), \quad & \Phi^{cs}(z,z_0;\lambda,A) = e^{\eta(z-z_0)}\Psi^{cs}(z,z_0;\lambda,A).
    \end{split}
\end{equation*}
        
\section{Exponential decay of eigenfunctions}\label{sec:exponentialdecay}

\begin{lemma}\label{lem:sol-rep}
    Let $\Ucal$ be a solution of \eqref{eq:eigenvalue-spatial-dynamics}. Then
    \begin{enumerate}[(i)]
        \item if $\Ucal$ is bounded on $\mathbb{R}_+$, then for every $R\geq 0$ there exists a $\Ucal_0^s \in X^s$ and a $\Ucal_0^c\in X^c$ such that for $z\geq R$,
        \begin{equation}\label{eq:lem41_+}
            \begin{aligned}
                \Ucal(z) = \Phi^s(z,R;\lambda, A_{\per})\Ucal_0^s &+ \Phi^c(z,0;\lambda, A_{\per})\Ucal_0^c + \int_R^z\Phi^s(z,\xi;\lambda, A_{\per})S(\xi;A)\Ucal(\xi)d\xi \\
                &-\int_z^{\infty}\Phi^{cu}(z,\xi;\lambda,A_{\per})S(\xi;A)\Ucal(\xi)d\xi,
            \end{aligned}
        \end{equation}
        \item if $\Ucal$ is bounded on $\mathbb{R}_-$, then for every $R\geq 0$ there exists a $\Vcal_0^u \in X^u$ and a $\Vcal_0^c\in X^c$ such that for $z\leq -R$,
        \begin{equation}\label{eq:lem41_-}
            \begin{aligned}
                \Ucal(z) = \Phi^u(z,-R;\lambda,A_{\per})\Vcal_0^s &+ \Phi^c(z,0;\lambda,A_{\per})\Vcal_0^c - \int_z^{-R}\Phi^u(z,\xi;\lambda,A_{\per})S(\xi;A)\Ucal(\xi)d\xi \\
                &+\int_{-\infty}^z\Phi^{cs}(z,\xi;\lambda,A_{\per})S(\xi;A)\Ucal(\xi)d\xi.
            \end{aligned}
        \end{equation}
    \end{enumerate}
\end{lemma}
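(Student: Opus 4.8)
The plan is to regard the eigenvalue system \eqref{eq:eigenvalue-spatial-dynamics} for the potential $A$ as an inhomogeneous perturbation of the periodic system at infinity and to write down the Lyapunov--Perron variation-of-constants formula associated with the exponential dichotomies of Lemma \ref{lem:exponential-shift-periodic}. Using \eqref{eq:perturb-shift}, one rewrites \eqref{eq:eigenvalue-spatial-dynamics} as
\[
\Ucal' - \Bcal(z;\lambda,A_{\per})\Ucal = g(z), \qquad g(z) := S(z;A)\Ucal(z).
\]
The first point to establish is that $g\in C(\R_+;X)\cap L^1(\R_+;X)$. Indeed, $S(z;A)\in\Lcal(X)$ with $\|S(z;A)\|_{\Lcal(X)}\le C\|A-A_{\per}\|_{\beta}\,(1+|z|)^{-\beta}$: the only entry of $S$ that is not a multiplication operator is $-2i(A-A_{\per})\partial$, which maps $H^1(S^1)$ into $L^2(S^1)$, and all coefficients are bounded by $(1+|z|)^{-\beta}$ through the definition of $\|\cdot\|_{\beta}$ and assumption \ref{it:A.1}. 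Since $\Ucal$ is assumed bounded in $X$ on $\R_+$ and $\beta>1$, integrability of $g$ follows.

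I would prove (i) and obtain (ii) by the symmetric argument with the roles of $+\infty$ and $-\infty$ interchanged (equivalently, by the reflection $z\mapsto-z$). Let $X^s(z)\oplus X^c(z)\oplus X^u(z)$ be the Floquet splitting of the periodic system, with projections $P^s,P^c,P^u$, and let $\Phi^s,\Phi^c,\Phi^u$ (and $\Phi^{cu}=\Phi^c\oplus\Phi^u$, $\Phi^{cs}=\Phi^c\oplus\Phi^s$) be the evolution operators obtained from the dichotomies after undoing the shift. The quantitative input needed — and, I expect, the heart of the matter — is that the factor $e^{-\eta(z-z_0)}$ introduced by the shift compensates the slowest (center) rate of the shifted center-unstable evolution, so that
\[
\|\Phi^s(z,\xi;\lambda,A_{\per})\|_{\Lcal(X)}\le K e^{-\kappa(z-\xi)}\ \ (z\ge\xi\ge 0),\qquad \|\Phi^{cu}(z,\xi;\lambda,A_{\per})\|_{\Lcal(X)}\le K\ \ (0\le z\le\xi),
\]
for some $\kappa>0$; this is the step where the dichotomy rates together with the algebraic decay $\beta>1$ make both integrals in \eqref{eq:lem41_+} absolutely convergent, and it is also where the construction would break down without first performing the shift.

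Granting these bounds, I would verify that, for any fixed $\Ucal_0^s\in X^s$ and $\Ucal_0^c\in X^c$, the right-hand side $\Vcal(z)$ of \eqref{eq:lem41_+} is a bounded solution on $\R_+$ of the inhomogeneous equation $\Vcal'-\Bcal(z;\lambda,A_{\per})\Vcal=g$: the free terms $\Phi^s(z,R)\Ucal_0^s$ and $\Phi^c(z,0)\Ucal_0^c$ solve the homogeneous periodic system, each integrand solves it in $z$, and differentiating the two integrals produces the boundary contributions $\Phi^s(z,z)g(z)+\Phi^{cu}(z,z)g(z)=\bigl(P^s(z)+(I-P^s(z))\bigr)g(z)=g(z)$. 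For the converse, given the bounded solution $\Ucal$, set $\Dcal:=\Ucal-\Vcal$; it solves the homogeneous periodic system on $[R,\infty)$ and is bounded there, so its unstable component vanishes identically and $\Dcal(z)=\Phi^s(z,R)P^s(R)\Dcal(R)+\Phi^c(z,0)\Phi^c(0,R)P^c(R)\Dcal(R)$.

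Choosing $\Ucal_0^s$ and $\Ucal_0^c$ to absorb exactly these two pieces — i.e.\ matching the stable component of $\Ucal$ at $z=R$ (the only $R$-dependent datum) and the $R$-independent center component of $\Ucal$, each corrected by the value of the $\Phi^{cu}$-integral at the relevant point — forces $\Dcal\equiv 0$, which is precisely \eqref{eq:lem41_+}. The genuinely substantive step is the uniform control of $\Phi^{cu}$ on the center directions; the matching step at the end is routine but requires keeping the three Floquet projections, and the $R$- versus $0$-base points, carefully bookkept.
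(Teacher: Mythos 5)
Your proposal is correct and follows essentially the same route as the paper: both arguments rest on the variation-of-constants formula relative to the Floquet trichotomy of the periodic system at infinity, the integrability of $S(\cdot;A)\Ucal(\cdot)$ coming from $\beta>1$ and the boundedness of $\Ucal$, the uniform boundedness of $\Phi^{cu}(z,\xi)$ for $z\le\xi$ (the shift-compensation you highlight), and the fact that a bounded solution can carry no free unstable term. The only difference is organizational: the paper projects $\Ucal$ onto the stable, center and unstable subspaces, applies variation of constants to each component with base point $\zeta$, and lets $\zeta\to\infty$ to identify $\Ucal_0^c$ and kill the unstable contribution, whereas you posit the right-hand side, verify that it solves the inhomogeneous equation, and match the free data at $z=R$ --- the same estimates in a slightly different order.
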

\begin{proof}
    We consider the case when $\Ucal$ is bounded on $\R_+$, since the other case is completely identical. We introduce $\Ucal^i \coloneqq P^i(\cdot;\lambda,A)\Ucal$ for $i\in\{s,c,u\}$. By the variation of constants formula, after applying projections, we get
    \begin{equation}
        \begin{aligned}
            \Ucal^s(z) &= \Phi^s(z,\zeta;\lambda,A_{\per})\Ucal^s(\zeta)+\int_{\zeta}^z\Phi^s(z,\xi;\lambda,A_{\per})S(\xi;A)\Ucal(\xi)d\xi, \quad z\geq\zeta\geq 0, \\
            \Ucal^c(z) &= \Phi^c(z,\zeta;\lambda,A_{\per})\Ucal^c(\zeta)+\int_{\zeta}^z\Phi^c(z,\xi;\lambda,A_{\per})S(\xi;A)\Ucal(\xi)d\xi, \quad z,\zeta\geq 0, \\
            \Ucal^u(z) &= \Phi^s(z,\zeta;\lambda,A_{\per})\Ucal^u(\zeta)+\int_{\zeta}^z\Phi^u(z,\xi;\lambda,A_{\per})S(\xi;A)\Ucal(\xi)d\xi, \quad \zeta\geq z\geq 0.
        \end{aligned}
        \label{eq:projeqn}
    \end{equation}
    Since $\lVert \Ucal(z) \rVert_X$ is bounded, it follows that also $\Ucal^s,\Ucal^c$ and $\Ucal^u$ all are bounded.

    We first look at $\Ucal^u$ and let $\zeta \to \infty$ in the last equation of \eqref{eq:projeqn}. Since $\Ucal^u$ is bounded, the first term goes to zero, and it follows that 
    \begin{equation*}
        \Ucal^u(z) = - \int_z^{\infty}\Phi^u(z,\xi;\lambda,A_{\per})S(\xi;A)\Ucal(\xi)d\xi.
    \end{equation*}
    Next, we study the equation for $\Ucal^c$. 
    
    Note that $\Phi^c(z,\xi;\lambda,A_{\per})$ is a bounded operator mapping a finite-dimensional set into itself. Therefore
    \begin{equation*}
\Phi^c(z,\xi;\lambda,A_{\per})=\Phi^c(z,\xi;\lambda,A_{\per})\Phi^c(\xi,0;\lambda,A_{\per})\Phi^c(0,\xi;\lambda,A_{\per}) = \Phi^c(z,0;\lambda,A_{\per})\Phi^c(0,\xi;\lambda,A_{\per}).
    \end{equation*}
    Now it follows that the integral on the right-hand side of the second equation of \eqref{eq:projeqn} converges as $\zeta \to \infty$ since $\lVert \Phi^c(0,\xi;\lambda,A_{\per})\rVert$ is uniformly bounded for all $\xi\in\mathbb{R}$ and since 
    \begin{equation}
    \begin{aligned}
    \int_z^{\infty}\lVert S(\xi;A)\Ucal(\xi)\rVert_X d\xi &\leq \lVert S(\cdot;A)\rVert_{X_{\beta}}\lVert \Ucal \rVert_{L^{\infty}}\int_z^{\infty}\dfrac{1}{(1+\xi)^{\beta}} \dd \xi \\
    &=\dfrac{1}{\beta - 1}\lVert S(\cdot;A)\rVert_{X_{\beta}}\lVert \Ucal \rVert_{L^{\infty}}\dfrac{1}{(1+z)^{\beta - 1}},
    \end{aligned}
    \label{eq:intest}
    \end{equation}
    where we recall the assumption that $\beta >1$. Since the left-hand side of the equation for $\Ucal^c$ does not depend on $\xi$, it follows that $\lim_{\zeta\to \infty}\Phi^c(0,\xi;\lambda,A_{\per})\Ucal^c(\xi)$ exists. Hence, there exists a $\Ucal_0^c\in X^c$ such that 
    \begin{equation*}
        \Ucal^c(z)= \Phi^c(z,0;\lambda,A_{\per})\Ucal_0^c - \int_z^{\infty}\Phi^c(z,\xi;\lambda,A_{\per})S(\xi,A)\Ucal(\xi) d\xi.
    \end{equation*}
    For $\Ucal^s$, we choose $\xi=R\geq 0$ arbitrarily, so that by \eqref{eq:projeqn} for $z\geq R$,
    \begin{equation*}
        \begin{aligned}
            \Ucal(z) = \Phi^s(z,R;\lambda,A_{\per})\Ucal_0^s &+ \Phi^c(z,0;\lambda,A_{\per})\Ucal_0^c + \int_R^z\Phi^s(z,\xi;\lambda,A_{\per})S(\xi;A)\Ucal(\xi)d\xi \\
                &-\int_z^{\infty}\Phi^{cu}(z,\xi;\lambda,A_{\per})S(\xi;A)\Ucal(\xi)d\xi,
        \end{aligned}
    \end{equation*}
    with $\Ucal_0^s=\Ucal^s(R)$.
\end{proof}

\begin{lemma}\label{lem:exp-dec}
     Assume that assumptions \ref{it:A.1}-\ref{it:A.4} are satisfied. Further assume that $\lambda > 0$ is an eigenvalue of the operator $L_A$ with corresponding eigenfunction $u\in H^2(\mathbb{R}\times S^1)$. We denote by $\Ucal$ the corresponding solution of the system \eqref{eq:eigenvalue-spatial-dynamics}. Let $\eta\in\big(0,\delta(\lambda)\big)$. Then there exists a constant $K>0$ such that 
    \begin{equation*}
        \lVert \Ucal(z) \rVert_X \leq Ke^{-\eta\lvert z\rvert},
    \end{equation*}
    for all $z\in\mathbb{R}$.
\end{lemma}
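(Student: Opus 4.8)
The plan is to combine the solution representation of Lemma~\ref{lem:sol-rep} with a fixed point argument in exponentially weighted sup-norm spaces: I will exhibit $\Ucal$, restricted to a half-line, as a fixed point of an affine integral operator that is \emph{simultaneously} a contraction on the space of bounded profiles and on the space of profiles decaying like $e^{-\eta z}$, so that the (unique) bounded fixed point $\Ucal$ must already be the decaying one. First I would observe that $\Ucal=(u,u')^{T}$ is bounded and decays to zero: since $u\in H^2(\R\times S^1)$, the Sobolev embedding theorem gives $\Ucal\in C(\R;X)$ with $\|\Ucal(z)\|_X\to 0$ as $|z|\to\infty$; in particular $\Ucal$ is bounded on $\R_+$ and on $\R_-$. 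Applying Lemma~\ref{lem:sol-rep} then yields, for every fixed $R\ge0$, the representations \eqref{eq:lem41_+} on $\{z\ge R\}$ and \eqref{eq:lem41_-} on $\{z\le -R\}$, with $\Ucal_0^s\in X^s$, $\Ucal_0^c\in X^c$, $\Vcal_0^u\in X^u$ and $\Vcal_0^c\in X^c$.

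Next I would show that the central components vanish, $\Ucal_0^c=0=\Vcal_0^c$. From the identity for $\Ucal^c$ derived in the proof of Lemma~\ref{lem:sol-rep}, namely $\Ucal^c(z)=\Phi^c(z,0;\lambda,A_{\per})\Ucal_0^c-\int_z^{\infty}\Phi^c(z,\xi;\lambda,A_{\per})S(\xi;A)\Ucal(\xi)\dd\xi$, I would let $z\to+\infty$: the integral tends to $0$ by the estimate \eqref{eq:intest} together with the uniform boundedness of $\Phi^c$, while $\Ucal^c(z)=P^c(z)\Ucal(z)\to 0$ because $\Ucal(z)\to 0$ and the spectral projections are uniformly bounded. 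Hence $\Phi^c(z,0;\lambda,A_{\per})\Ucal_0^c\to 0$, and since the flow restricted to the finite-dimensional space $X^c$ has only purely imaginary Floquet exponents — in Floquet normal form $\Phi^c(z,0;\lambda,A_{\per})=\Pi(z)e^{Bz}$ with $\Pi$ $p$-periodic and invertible and $\sigma(B)\subset i\R$ — this forces $\Ucal_0^c=0$. The symmetric computation on $\R_-$ gives $\Vcal_0^c=0$.

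With $\Ucal_0^c=0$, \eqref{eq:lem41_+} exhibits $\Ucal|_{[R,\infty)}$ as a fixed point of the affine operator
\[
(\mathcal{T}W)(z)=\Phi^s(z,R)\Ucal_0^s+\int_R^{z}\Phi^s(z,\xi)S(\xi;A)W(\xi)\dd\xi-\int_z^{\infty}\Phi^{cu}(z,\xi)S(\xi;A)W(\xi)\dd\xi
\]
(all evolution operators evaluated at $(\lambda,A_{\per})$), with $\Ucal_0^s=P^s(R)\Ucal(R)$ a fixed vector. By the construction in Section~\ref{sec:ODEformulation}, $\Phi^s$ decays forward at some rate $\nu\in(\eta,\delta(\lambda))$, i.e.\ $\|\Phi^s(z,\xi;\lambda,A_{\per})\|\le Ke^{-\nu(z-\xi)}$ for $z\ge\xi$, while $\Phi^{cu}$ stays bounded backward, $\|\Phi^{cu}(z,\xi;\lambda,A_{\per})\|\le K$ for $z\le\xi$ (as already used in the proof of Lemma~\ref{lem:sol-rep}). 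For $\gamma\in\{0,\eta\}$ let $Y_\gamma$ be the Banach space of $W\in C([R,\infty);X)$ with $\|W\|_\gamma:=\sup_{z\ge R}e^{\gamma(z-R)}\|W(z)\|_X<\infty$. Using the decay $\|S(z;A)\|_{\Lcal(X)}\le C(1+|z|)^{-\beta}$ with $\beta>1$, together with the elementary inequalities $e^{\gamma(z-R)-\nu(z-\xi)-\gamma(\xi-R)}\le 1$ for $R\le\xi\le z$ and $e^{\gamma(z-\xi)}\le 1$ for $z\le\xi$, a routine estimate shows that $\mathcal{T}$ maps $Y_\gamma$ into itself and is Lipschitz there with a constant of size $O\bigl(\|S(\cdot;A)\|_{L^\infty([R,\infty))}+\|S(\cdot;A)\|_{L^1([R,\infty))}\bigr)$, hence $<1$ for $R$ large. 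Then $\mathcal{T}$ has a unique fixed point in $Y_0$ and a unique fixed point in $Y_\eta$; since $Y_\eta\hookrightarrow Y_0$ and $\Ucal|_{[R,\infty)}\in Y_0$ is a fixed point, uniqueness in $Y_0$ forces it to coincide with the $Y_\eta$ fixed point. Thus $\|\Ucal(z)\|_X\le Ke^{-\eta(z-R)}$ for $z\ge R$, and the boundedness of $\Ucal$ on $[0,R]$ extends the estimate (with a larger $K$) to $\R_+$. The identical argument on $\R_-$, now using $\Vcal_0^c=0$, the backward decay of $\Phi^u$ and the forward boundedness of $\Phi^{cs}$, gives $\|\Ucal(z)\|_X\le Ke^{\eta z}$ for $z\le 0$; combining the two bounds yields the claim.

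The hard part is this last step, and specifically recognising that one and the same integral operator contracts on both $Y_0$ and $Y_\eta$ — this is exactly what upgrades \enquote{bounded} to \enquote{exponentially decaying}. It relies crucially on the vanishing of the central components (otherwise the non-decaying term $\Phi^c(\cdot,0;\lambda,A_{\per})\Ucal_0^c$ would keep $\Ucal$ out of $Y_\eta$) and on the strict gap $\eta<\delta(\lambda)$, which is what makes $\mathcal{T}$ preserve $Y_\eta$ via the forward decay of $\Phi^s$ beyond the rate $\eta$ and keeps the weighted kernels bounded. The remaining ingredients — the Sobolev embedding, the limit computations for $\Ucal_0^c$ and $\Vcal_0^c$, and the Lipschitz bounds for $\mathcal{T}$ — are routine.
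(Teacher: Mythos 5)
Your proof is correct and follows essentially the same route as the paper's (which defers to Lemma~6 of \cite{laptev2017}): first kill the central components $\Ucal_0^c$, $\Vcal_0^c$ using the boundedness/decay of $\Ucal$ and the neutral Floquet behaviour on $X^c$, then run Banach's fixed point theorem for the integral operator from Lemma~\ref{lem:sol-rep} in the exponentially weighted space $Y_\eta$ with $R$ large. Your explicit observation that the same operator contracts on both $Y_0$ and $Y_\eta$, so that uniqueness in $Y_0$ identifies the bounded solution with the decaying fixed point, is exactly the step the paper leaves implicit.
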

\begin{proof}
    The proof of this result follows by the application of Banach's fixed point theorem in exponentially weighted spaces. Since, in light of Lemma \ref{lem:sol-rep}, the proof follows exactly as Lemma 6 in \cite{laptev2017}, we only give a brief summary of the precise method. We only consider the exponential decay for the case $z\to + \infty.$ The case $z\to -\infty$ follows identically relying on Lemma \ref{lem:sol-rep} (ii).
    
    Recall that by equation \ref{eq:lem41_+} for $z\geq R$, $\Ucal$ is a solution to the fixed-point problem
    \begin{equation*}
        \begin{aligned}
                \Ucal(z) = F(\Ucal)(z) := \Phi^s(z,R;\lambda, A_{\per})\Ucal_0^s & + \Phi^c(z,0;\lambda, A_{\per})\Ucal_0^s + \int_R^z\Phi^s(z,\xi;\lambda, A_{\per})S(\xi;A)\Ucal(\xi)d\xi \\
                &-\int_z^{\infty}\Phi^{cu}(z,\xi;\lambda,A_{\per})S(\xi;A)\Ucal(\xi)d\xi \\ 
                = \Phi^s(z,R;\lambda, A_{\per})\Ucal_0^s & + \Phi^c(z,0;\lambda, A_{\per})\Ucal_0^s +  I_1(z) + I_2(z)
            \end{aligned}
    \end{equation*}
    First we show that $\Ucal_0^c=0$. Relying on standard semigroup estimates, one may argue that $I_1(z),I_2(z) \to 0$ as $z\to +\infty$. Since $\Phi^s(z,R;\lambda, A_{\per})\Ucal_0^s$ decays exponentially by construction, we find that $\Ucal(z) \to \Ucal_0^c$ as $z\to +\infty$. But since $\Ucal\in H^1(\R;X)$, it necessarily follows $\Ucal_0^c=0$.

    The exponential decay now follows by showing that $F$ is a contraction in the space $Y_{\eta} = \{\Vcal \in C([R;\infty);\Lcal(X)) \,:\, \|\Vcal\|_{Y_{\eta}} := \sup_{z\geq R} e^{\eta z} \|\Vcal(z)\|_X < \infty\}$. This follows by applying standard semigroup estimates and precisely in the same fashion as in the proof of \cite[Lemma 6]{laptev2017}, provided that one chooses $R$ large enough.
\end{proof}

\section{Lyapunov--Schmidt reduction}\label{sec:Lyapunov}
We are now ready to prove our main result. For this, we utilize the Lyapunov-Schmidt reduction. We denote by $u_*$ the eigenfunction corresponding to the embedded eigenvalue of the unperturbed problem, i.e., $L_{A_0}u_*=\lambda_0u_*$. We assume, without loss of generality, that $u_*$ is normalized. We further write $\Ucal_*=(u_*,u_*')^T$ as the corresponding solution of \eqref{eq:eigenvalue-spatial-dynamics} with $A=A_0$. Using the evolution operators and their related projections from the exponential dichotomy, we define the stable and unstable subspaces $E_+^s$ and $E_-^u$ by
\begin{equation*}
    \begin{aligned}
        E_+^s &= \{\Ucal\in Y; \hspace{2mm} P_+^s(0;\lambda_0,A_0)\Ucal=\Ucal\}, \\
        E_-^u &= \{\Ucal\in Y; \hspace{2mm} P_-^u(0;\lambda_0,A_0)\Ucal=\Ucal\}.
    \end{aligned}
\end{equation*}
These subspaces consist of inital values of solutions of the unperturbed system which decay exponentially as $z\to\infty$ and $z\to -\infty$, respectively. We have that $E_+^s\cap E_-^u = \text{span}\{\Ucal_*(0)\}$, since $\lambda_0$ was assumed to be of multiplicity $1$. The following is a corollary of Lemma \ref{lem:exp-dec}.
\begin{corollary}
    Let $\Ucal$ be a solution of \eqref{eq:eigenvalue-spatial-dynamics}. If $\Ucal(0)\in E_+^s\cap E_-^u$, then $\Ucal_0^c=\Vcal_0^c$ in $(i)$ and $(ii)$ of Lemma \ref{lem:sol-rep}.
\end{corollary}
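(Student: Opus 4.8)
The plan is to unwind the representation formulas in Lemma~\ref{lem:sol-rep} for a solution $\Ucal$ with $\Ucal(0) \in E_+^s \cap E_-^u$ and to read off the central components from the asymptotic behaviour. First I would observe that the hypothesis $\Ucal(0) \in E_+^s$ means precisely that $\Ucal$ is the solution launched from an initial value in the stable subspace, hence $\Ucal$ is bounded (in fact exponentially decaying) on $\R_+$; likewise $\Ucal(0) \in E_-^u$ forces $\Ucal$ to be bounded on $\R_-$. Thus both parts $(i)$ and $(ii)$ of Lemma~\ref{lem:sol-rep} apply to the same function $\Ucal$, yielding the two representations~\eqref{eq:lem41_+} and~\eqref{eq:lem41_-}, with central data $\Ucal_0^c$ and $\Vcal_0^c$ respectively.

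The key step is then to extract $\Ucal_0^c$ and $\Vcal_0^c$ intrinsically from $\Ucal$, independently of which side we started from. Applying the central projection $P^c(0;\lambda,A_{\per})$ to~\eqref{eq:lem41_+} at $z=0$ (taking $R=0$), the stable term drops out since $\Phi^s(0,0)\Ucal_0^s = \Ucal_0^s \in X^s = \ker P^c(0)$, the term $\Phi^c(0,0)\Ucal_0^c = \Ucal_0^c$ survives, and the stable integral over $[0,0]$ vanishes; what remains is $P^c(0)\Ucal(0) = \Ucal_0^c - P^c(0)\int_0^{\infty}\Phi^{cu}(0,\xi)S(\xi;A)\Ucal(\xi)\,d\xi$. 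An analogous computation with~\eqref{eq:lem41_-} at $z=0$ gives $P^c(0)\Ucal(0) = \Vcal_0^c + P^c(0)\int_{-\infty}^0\Phi^{cs}(0,\xi)S(\xi;A)\Ucal(\xi)\,d\xi$. Alternatively, and perhaps more cleanly, I would argue via the limit $z \to +\infty$: in~\eqref{eq:lem41_+} the stable semigroup term decays, and the two integrals tend to $0$ by the decay estimate~\eqref{eq:intest} together with the uniform bounds on the central evolution operator (exactly the argument used in the proof of Lemma~\ref{lem:exp-dec}), so $\Phi^c(z,0)^{-1}\Ucal(z) = \Phi^c(0,z)\Ucal(z) \to \Ucal_0^c$; symmetrically $\Phi^c(0,z)\Ucal(z) \to \Vcal_0^c$ as $z \to -\infty$. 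Since $\Ucal$ solves the linear ODE~\eqref{eq:eigenvalue-spatial-dynamics} globally on $\R$, the central component $\Ucal^c(z) = P^c(z;\lambda,A_{\per})\Ucal(z)$ is determined globally on $\R$ from $\Ucal^c(0)$ via the invertible finite-dimensional flow $\Phi^c$, so both candidate vectors must equal $\Phi^c(0,0)^{-1}P^c(0)\Ucal(0) = P^c(0)\Ucal(0)$ after accounting for the integral corrections, forcing $\Ucal_0^c = \Vcal_0^c$.

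The one point requiring a little care — and the likely main obstacle — is making sure the ``$\xi = 0$ vs.\ $R$'' bookkeeping and the choice of base point in the central evolution operator are consistent between the two representations, since $(i)$ uses $\Phi^{cu}$ and the half-line $[z,\infty)$ while $(ii)$ uses $\Phi^{cs}$ and $(-\infty,z]$; one must check that on the common overlap region these glue to a single global central solution. This is precisely guaranteed by the fact, recorded in the proof of Lemma~\ref{lem:sol-rep}, that $\Phi^c$ restricted to the finite-dimensional space $X^c$ is an invertible evolution family with $\Phi^c(z,\xi) = \Phi^c(z,0)\Phi^c(0,\xi)$, so the central part of $\Ucal$ satisfies the variation-of-constants identity $\Ucal^c(z) = \Phi^c(z,0)\Ucal^c(0) + \int_0^z \Phi^c(z,\xi)S(\xi;A)\Ucal(\xi)\,d\xi$ on all of $\R$; evaluating the two one-sided formulas against this global identity and using the convergence of the improper integrals (again via~\eqref{eq:intest}) pins down $\Ucal_0^c = \Vcal_0^c$ as claimed. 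I would write the argument in at most half a page, citing Lemma~\ref{lem:sol-rep} and the estimate~\eqref{eq:intest}, with no new machinery needed.
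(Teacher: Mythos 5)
Your reduction to the central components is set up correctly, but the step that is supposed to close the argument --- the ``gluing'' of the two one-sided representations via the global variation-of-constants identity --- does not actually yield $\Ucal_0^c=\Vcal_0^c$. Your own computation at $z=0$ gives
\begin{equation*}
P^c(0)\,\Ucal(0)=\Ucal_0^c-\int_0^{\infty}\Phi^{c}(0,\xi)S(\xi;A)\Ucal(\xi)\,d\xi
=\Vcal_0^c+\int_{-\infty}^{0}\Phi^{c}(0,\xi)S(\xi;A)\Ucal(\xi)\,d\xi,
\end{equation*}
so what you have actually proved is
\begin{equation*}
\Ucal_0^c-\Vcal_0^c=\int_{-\infty}^{\infty}\Phi^{c}(0,\xi)S(\xi;A)\Ucal(\xi)\,d\xi,
\end{equation*}
and the right-hand side has no reason to vanish for a merely bounded solution; the phrase ``after accounting for the integral corrections'' hides exactly this full-line integral. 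Equivalently, in your limit version you identify $\lim_{z\to+\infty}\Phi^c(0,z)P^c(z)\Ucal(z)=\Ucal_0^c$ and $\lim_{z\to-\infty}\Phi^c(0,z)P^c(z)\Ucal(z)=\Vcal_0^c$, but these are limits of the same function at opposite ends of $\R$, and they differ by precisely that integral.

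The ingredient you mention in passing but never use is the one the statement actually rests on: it is presented as a corollary of Lemma \ref{lem:exp-dec}. Since $E_+^s\cap E_-^u=\operatorname{span}\{\Ucal_*(0)\}$, a solution with $\Ucal(0)\in E_+^s\cap E_-^u$ is a multiple of $\Ucal_*$ and hence satisfies $\|\Ucal(z)\|_X\le Ke^{-\eta|z|}$ on all of $\R$. Because $\Phi^c(0,z)$ is uniformly bounded in $z$ (the Floquet exponents on $X^c$ are purely imaginary), the two limits above are then both equal to $0$, i.e. $\Ucal_0^c=\Vcal_0^c=0$; this is exactly the argument showing $\Ucal_0^c=0$ in the proof of Lemma \ref{lem:exp-dec}, applied on each half-line. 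The equality of the two central data is thus a consequence of both of them vanishing, not of a matching condition at $z=0$. Your proof needs the exponential decay (or at least $\Ucal\in H^1(\R;X)$) as an explicit input, and once it is used the gluing apparatus becomes unnecessary.
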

To find embedded eigenvalues, we define the mapping $\iota\colon E_+^s\times E_-^u\times \mathbb{R}\times (A_{\per} + \Xcal_{\beta})\to X$ by
$$\iota\colon (\Ucal_0^s,\Ucal_0^u;\lambda,A)=P_+^s(0;\lambda,A)\Ucal_0^s-P_-^u(0;\lambda,A)\Ucal_0^u.$$
\begin{lemma}
    It holds that $\lambda\notin\sigma_{per}(L_{A_{\per}})$ is an eigenvalue of $L_A$ if and only if there exist $\Ucal_0^s\in E_+^s$ and $\Ucal_0^u\in E_-^u$ with $(\Ucal_0^s,\Ucal_0^u)\neq (0,0)$ such that
    \begin{equation}
    \iota(\Ucal_0^s,\Ucal_0^u;\lambda,A)=0.
    \label{eq:ieq}
    \end{equation}
\end{lemma}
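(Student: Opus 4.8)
The plan is to prove the equivalence in two directions, translating between the PDE eigenvalue problem for $L_A$, its spatial-dynamics formulation \eqref{eq:eigenvalue-spatial-dynamics}, and the decomposition provided by the exponential dichotomies on $\R_+$ and $\R_-$.

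\textbf{From eigenfunction to a zero of $\iota$.} Suppose $\lambda\notin\sigma_{\per}(L_{A_{\per}})$ is an eigenvalue of $L_A$ with eigenfunction $u\in H^2(\R\times S^1)$. By Lemma \ref{lem:equivalence-spatial-dynamics} the pair $\Ucal=(u,u')^T$ is a solution of \eqref{eq:eigenvalue-spatial-dynamics} lying in $C(\R;D(\Bcal))\cap C^1(\R;X)$, and since $u\in H^2(\R\times S^1)$ we have $\Ucal\in H^1(\R;X)$, so in particular $\Ucal$ is bounded on $\R_+$ and on $\R_-$. Applying Lemma \ref{lem:exp-dec} (which uses \ref{it:A.1}--\ref{it:A.4}), $\Ucal$ decays exponentially as $|z|\to\infty$. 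First I would restrict to $z\geq 0$: because $\Ucal$ is a decaying solution on $\R_+$, the component in $\ker P_+^s(0;\lambda,A)$ (the center-unstable part, which cannot decay) must vanish, so $\Ucal(0)\in\ran P_+^s(0;\lambda,A)$, i.e. $P_+^s(0;\lambda,A)\Ucal(0)=\Ucal(0)$. Symmetrically, on $\R_-$ decay forces $P_-^u(0;\lambda,A)\Ucal(0)=\Ucal(0)$. Setting $\Ucal_0^s=\Ucal_0^u=\Ucal(0)$, one has $(\Ucal_0^s,\Ucal_0^u)\neq(0,0)$ since $u\not\equiv0$ implies $\Ucal(0)\neq0$ (an eigenfunction cannot vanish to infinite order), and $\iota(\Ucal_0^s,\Ucal_0^u;\lambda,A)=P_+^s(0;\lambda,A)\Ucal_0^s-P_-^u(0;\lambda,A)\Ucal_0^u=\Ucal(0)-\Ucal(0)=0$.

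\textbf{From a zero of $\iota$ to an eigenfunction.} Conversely, suppose $\iota(\Ucal_0^s,\Ucal_0^u;\lambda,A)=0$ with $(\Ucal_0^s,\Ucal_0^u)\neq(0,0)$. Write $\Ucal_+(0)=P_+^s(0;\lambda,A)\Ucal_0^s$ and $\Ucal_-(0)=P_-^u(0;\lambda,A)\Ucal_0^u$; the equation says $\Ucal_+(0)=\Ucal_-(0)=:\Ucal(0)$. Evolve this common initial datum forward on $\R_+$ using the stable evolution operator $\Phi^s$ and backward on $\R_-$ using the unstable evolution operator $\Phi^u$; since $\Ucal(0)$ lies in the ranges of the respective projections at $z=0$, these pieces match at $z=0$ and glue to a single solution $\Ucal\in C(\R;D(\Bcal))\cap C^1(\R;X)$ of \eqref{eq:eigenvalue-spatial-dynamics} that decays exponentially as $|z|\to\infty$, hence lies in $H^1(\R;X)\cap L^2(\R;D(\Bcal))$. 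If $\Ucal(0)\neq 0$ this $\Ucal$ is nontrivial, and by Lemma \ref{lem:equivalence-spatial-dynamics} its first component $u$ is a (nonzero, decaying, hence $L^2$) solution of $L_A u=\lambda u$, i.e. $\lambda\in\sigma_p(L_A)$. The remaining subtlety is to rule out $\Ucal(0)=0$: if $\Ucal(0)=0$ then $P_+^s(0;\lambda,A)\Ucal_0^s=0$ and $P_-^u(0;\lambda,A)\Ucal_0^u=0$; since $\Ucal_0^s$ already lies in the range of the projection family defining $E_+^s$ and similarly for $\Ucal_0^u$, and by the last sentence of Lemma \ref{lem:exponential-shift} the kernels are kept fixed equal to $\ker P(0;\lambda_0,A_{\per})$, applying the projection at the base point $(\lambda_0,A_0)$ one deduces $\Ucal_0^s=\Ucal_0^u=0$, contradicting $(\Ucal_0^s,\Ucal_0^u)\neq(0,0)$; here one uses that $E_+^s$ and $E_-^u$ are defined via the base-point projections $P_\pm(0;\lambda_0,A_0)$, so membership plus $P_\pm(0;\lambda,A)\Ucal_0^{s/u}=0$ together with the fixed-kernel property pins down the vector.

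\textbf{Main obstacle.} The routine direction is the first one; the delicate point in the converse is handling the possibility $\Ucal(0)=0$, i.e. ensuring that a nontrivial pair $(\Ucal_0^s,\Ucal_0^u)$ genuinely produces a nontrivial glued solution. This is exactly where the normalization built into Lemma \ref{lem:exponential-shift} — namely that the dichotomy kernels can be chosen independent of $(\lambda,A)$ near $(\lambda_0,A_0)$ and equal to the base-point kernel — is essential, since it lets one transfer the relation $P_\pm(0;\lambda,A)\Ucal_0^{s/u}=0$ back to the fixed subspaces $E_+^s,E_-^u$ on which $\Ucal_0^{s/u}$ live and conclude they vanish. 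I would also double-check the exclusion $\lambda\notin\sigma_{\per}(L_{A_{\per}})$ is used consistently: it guarantees (via the earlier lemmas, in particular the discreteness of $\sigma(\Tcal_{\per})$ and that $\alpha=0$ is not a Floquet exponent) that the shifted systems admit the exponential dichotomies of Lemma \ref{lem:exponential-shift}, so that $\Phi^s$, $\Phi^u$ and the projections $P_\pm^{s/u}(0;\lambda,A)$ are well defined and depend smoothly on the parameters in a neighborhood of $(\lambda_0,A_0)$.
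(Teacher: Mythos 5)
Your proposal is correct and follows essentially the same route as the paper: both directions pass through the spatial-dynamics equivalence (Lemma \ref{lem:equivalence-spatial-dynamics}), the exponential decay of eigenfunctions, and the dichotomy projections at $z=0$. In fact you are slightly more careful than the paper's own proof, which simply asserts that the common value $P_+^s(0;\lambda,A)\Ucal_0^s = P_-^u(0;\lambda,A)\Ucal_0^u$ is nonzero, whereas you rule out the degenerate case $\Ucal(0)=0$ explicitly via the fixed-kernel normalization of Lemma \ref{lem:exponential-shift}.
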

\begin{proof}
    The proof of the result follows along the same lines as that of \cite[Lemma 7]{laptev2017}. For the reader's convenience, we reproduce the argument here. Assume that \ref{eq:ieq} holds so that we know $P_+^s(0;\lambda,A)\Ucal_0^s = P_-^u(0;\lambda,A)\Ucal_0^u \neq 0$. Then the solution $\Ucal$ to \ref{eq:eigenvalue-spatial-dynamics} with initial condition $P_+^s(0;\lambda,A)\Ucal_0^s = P_-^u(0;\lambda,A)\Ucal_0^u$ must have exponential decay both for $z\to +\infty$ and $z\to -\infty$. But then, by Lemma \ref{lem:equivalence-spatial-dynamics} the first component of $\Ucal$ solves the eigenvalue problem for $L_A$ with eigenvalue $\lambda$.
    
    For the converse direction note that solutions to the eigenvalue problem $L_Au = \lambda u$ give rise to exponentially decaying solutions $\Ucal$ of the spatial dynamics problem and we may conclude that $\Ucal(0) = P_+^s(0;\lambda,A)\Ucal(0) = P_-^u(0;\lambda,A)\Ucal(0)$. This concludes the proof.
\end{proof}
To solve \eqref{eq:ieq}, we must first observe that for any $(\Ucal_0^s,\Ucal_0^u)\in E_+^s\times E_-^u$ we have 
$$\iota(\Ucal_0^s,\Ucal_0^u;\lambda_0,A_0)=\Ucal_0^s-\Ucal_0^u.$$
Thus, $\ran \,\iota(\cdot,\cdot;\lambda_0,A_0)=E_+^s+E_-^u$. Next, we will provide a result on the codimension of $E_+^s+E_-^u$.
\begin{lemma}\label{lem:codim-E}
    We have that $\codim(E_+^s+E_-^u)=\dim (X^c) + 1 = 2m+1$.
\end{lemma}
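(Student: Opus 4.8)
The plan is to compute the codimension of $E_+^s + E_-^u$ in $Y$ by relating it to the decomposition of $Y$ induced by the exponential dichotomies on $\mathbb{R}_+$ and $\mathbb{R}_-$ and the center subspace $X^c$ of the system at infinity. First I would recall that, by Lemma~\ref{lem:exponential-shift} and the construction of the evolution operators, the shifted full system on $\mathbb{R}_+$ splits $X$ as a direct sum of the "strong stable" subspace $\operatorname{Ran} P_+^s(0;\lambda_0,A_0) = E_+^s$ and a complementary "center-unstable" subspace, and symmetrically on $\mathbb{R}_-$ we get $X = E_-^u \oplus (\text{center-stable})$. Reversing the shift, $E_+^s$ consists precisely of initial data of solutions decaying exponentially as $z\to+\infty$, and $E_-^u$ of those decaying as $z\to-\infty$; the complement of each has dimension equal to $\dim X^s$ plus $\dim X^c$ on the respective half-line, where $X^s$ is the strong-stable part of $\Tcal_{\per}$ and $X^c$ the center.

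The key step is to identify the algebraic relations among these subspaces. I would use the reversibility structure already exploited in the proof that $\dim X^c$ is even: the involution $R$ conjugates the forward and backward evolution, so it maps the center-stable subspace on $\mathbb{R}_-$ onto the center-unstable subspace on $\mathbb{R}_+$ and sends $E_-^u$ to $E_+^s$ (up to the appropriate reflection), which forces the "sizes" of the stable and unstable pieces to match. Concretely, $X$ decomposes as $X^{ss} \oplus X^c \oplus X^{uu}$ under $\Tcal_{\per}$ (strong stable, center, strong unstable), with $\dim X^{ss} = \dim X^{uu}$ infinite but with a well-defined relative "defect." Then $E_+^s$ is a perturbation of $X^{ss}$ and $E_-^u$ is a perturbation of $X^{uu} \oplus$ (something), and the Fredholm/relative-dimension bookkeeping gives $\operatorname{codim}(E_+^s + E_-^u) = \dim X^c + 1$: the $\dim X^c$ comes from the center directions, which belong to neither $E_+^s$ nor $E_-^u$, and the extra $+1$ comes from the fact that $E_+^s$ and $E_-^u$ are not in general position — they already intersect in $\operatorname{span}\{\Ucal_*(0)\}$ because $\lambda_0$ is an embedded eigenvalue of multiplicity one. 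Since $E_+^s$ and $E_-^u$ would each have "expected codimension" $m$ inside the sum of all three pieces, and $\dim X^c = 2m$, the intersection of dimension $1$ contributes the $+1$: $\operatorname{codim}(E_+^s+E_-^u) = m + m + 1 = 2m + 1$.

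In practice I would make this rigorous by the standard relation for two closed subspaces whose sum is closed,
\begin{equation*}
    \operatorname{codim}(E_+^s + E_-^u) = \operatorname{codim} E_+^s + \operatorname{codim} E_-^u - \operatorname{codim}(E_+^s \cap E_-^u)^{\perp}\text{-type correction},
\end{equation*}
or more cleanly by using the Fredholm index of the map $\iota(\cdot,\cdot;\lambda_0,A_0):(\Ucal_0^s,\Ucal_0^u)\mapsto \Ucal_0^s-\Ucal_0^u$, whose range is exactly $E_+^s+E_-^u$ and whose kernel is $\{(\Ucal,\Ucal):\Ucal\in E_+^s\cap E_-^u\} = \operatorname{span}\{(\Ucal_*(0),\Ucal_*(0))\}$, so $\dim\ker = 1$. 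The index of this map equals $\operatorname{codim}(\operatorname{Ran} P_+^s) + \operatorname{codim}(\operatorname{Ran} P_-^u)$ computed relative to the center-removed decomposition, which by the dichotomy construction and the evenness proposition equals $-(2m)$ in the appropriate relative sense; combining $\dim\ker - \operatorname{codim}\operatorname{Ran} = \operatorname{index}$ then yields $\operatorname{codim}(E_+^s+E_-^u) = 1 + 2m$.

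The main obstacle I anticipate is making the relative-dimension counting precise: $E_+^s$ and $E_-^u$ are infinite-dimensional and infinite-codimensional in $Y$, so "codimension" only makes sense for the sum, and one must carefully track how the center subspace $X^c$ of $\Tcal_{\per}$ and the strong (un)stable subspaces of the \emph{perturbed} half-line systems relate to each other — in particular that the perturbation $S(z;A)$ does not change the relevant Fredholm indices (which follows from its integrable decay, $\beta>1$, already used in Lemma~\ref{lem:sol-rep}) and that the choice $\ker P(0;\lambda,A)=\ker P(0;\lambda_0,A_{\per})$ from Lemma~\ref{lem:exponential-shift} aligns the two half-line decompositions with the same reference splitting. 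Once that alignment is in place, the reversibility involution $R$ supplies the equality $\dim X^c = 2m$ with $m$ the common "one-sided defect," and the count closes.
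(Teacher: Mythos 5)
Your ``more cleanly'' route is exactly the paper's argument (deferred there to \cite[Lem.~8]{laptev2017}): one shows $\iota(\cdot,\cdot;\lambda_0,A_0)$ is Fredholm of index $-\dim X^c=-2m$ by writing it as the reference map $(\Ucal_0^s,\Ucal_0^u)\mapsto P^s\Ucal_0^s-P^u\Ucal_0^u$, whose range misses precisely the center directions, plus a small-norm-plus-compact perturbation coming from the integrable decay of $S(\cdot;A)$, and then $\codim\operatorname{Ran}\iota=\dim\ker\iota-\operatorname{ind}\iota=1+2m$ since the kernel is the diagonal copy of $E_+^s\cap E_-^u=\operatorname{span}\{\Ucal_*(0)\}$. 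The heuristic first paragraph (reversibility, ``expected codimension $m$'' for each subspace) is dispensable and not literally meaningful for infinite-codimensional subspaces, but the rigorous version you outline is the paper's proof.
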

\begin{proof}
In light of the constructions in Lemma \ref{lem:exponential-shift}, the proof follows in a completely similar way as the proof of Lemma 8 in \cite{laptev2017}, just with $4m-2$ substituted with $\dim(X^c)=2m$ everywhere and with the constructions of $S_j$ and $K_j$ according to Lemma \ref{lem:exponential-shift}. 
\end{proof}
Let $Q$ be the projection in $X$ onto $\text{Ran}\iota(\cdot,\cdot;\lambda_0,A_0)=E_+^s+E_-^u$. Lemma \ref{lem:codim-E} implies that $\dim(\ker Q)=\dim(X^c)+1=2m+1.$ Equation \eqref{eq:ieq} can be rewritten as 
\begin{equation}
    \begin{aligned}
        Q\iota(\Ucal_0^s,\Ucal_0^u;\lambda,A)&=0, \\
        (I-Q)\iota(\Ucal_0^s,\Ucal_0^u;\lambda,A)&=0.
    \end{aligned}
    \label{eq:projieq}
\end{equation}
Next, we want to add a condition to fix the solution amongst infinitely many to get a unique solution using the implicit function theorem.
\begin{lemma} \label{lem:hyperplane}
    Let $D\subset E_+^s\times E_-^u$ be an affine hyperplane such that 
    $$D\cap \text{span}\big\{\big(\Ucal_*(0),\Ucal_*(0)\big)\big\}=\big\{\big(\Ucal_*(0),\Ucal_*(0)\big)\big\}.$$
    For $(\lambda,A)$ close to $(\lambda_0,A_0)$, the first equation of \eqref{eq:projieq} has a unique solution
    $$(\Ucal_0^s,\Ucal_0^u)=\big(\Ucal_0^s(\lambda,A),\Ucal_0^u(\lambda,A)\big)\in D$$
    in a neighborhood of $\big((\Ucal_*(0),\Ucal_*(0)\big)$. Moreover, $\Ucal_0^s$ and $\Ucal_0^u$ are smooth in their arguments.
\end{lemma}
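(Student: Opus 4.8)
The statement is a direct application of the implicit function theorem to the map $F\colon D\times\R\times(A_{\per}+\Xcal_\beta)\to \ran\iota(\cdot,\cdot;\lambda_0,A_0)=E_+^s+E_-^u$ given by $F(\Ucal_0^s,\Ucal_0^u;\lambda,A)=Q\iota(\Ucal_0^s,\Ucal_0^u;\lambda,A)$, where $Q$ is the projection onto $E_+^s+E_-^u$. First I would record that $F$ is smooth in all of its arguments: this follows from the smooth dependence of the projections $P_+^s(0;\lambda,A)$ and $P_-^u(0;\lambda,A)$ on $(\lambda,A)$ established in Lemma~\ref{lem:exponential-shift}, composed with the (affine, hence smooth) inclusion $D\hookrightarrow E_+^s\times E_-^u$. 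Next I would note the base point: since $\iota(\Ucal_0^s,\Ucal_0^u;\lambda_0,A_0)=\Ucal_0^s-\Ucal_0^u$ and $(\Ucal_*(0),\Ucal_*(0))\in E_+^s\cap E_-^u\subset E_+^s\times E_-^u$ lies in $D$ by hypothesis, we have $F(\Ucal_*(0),\Ucal_*(0);\lambda_0,A_0)=Q(\Ucal_*(0)-\Ucal_*(0))=0$, so $\big((\Ucal_*(0),\Ucal_*(0)),(\lambda_0,A_0)\big)$ is a zero of $F$.

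The crux is to verify that the partial derivative of $F$ with respect to the $D$-variable at the base point is an isomorphism from the tangent space $T_{(\Ucal_*(0),\Ucal_*(0))}D$ onto $E_+^s+E_-^u$. Since $\iota(\cdot,\cdot;\lambda_0,A_0)$ is the linear map $(\Ucal_0^s,\Ucal_0^u)\mapsto\Ucal_0^s-\Ucal_0^u$, its derivative in the $D$-direction is just the restriction of $(v^s,v^u)\mapsto v^s-v^u$ to the linear subspace $T_{(\Ucal_*(0),\Ucal_*(0))}D=\{(v^s,v^u)\}$ parallel to $D$, followed by $Q$ (which acts as the identity on $E_+^s+E_-^u$, the range of this map). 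The map $(v^s,v^u)\mapsto v^s-v^u$ from $E_+^s\times E_-^u$ onto $E_+^s+E_-^u$ has kernel $\{(w,w):w\in E_+^s\cap E_-^u\}=\operatorname{span}\{(\Ucal_*(0),\Ucal_*(0))\}$, since $E_+^s\cap E_-^u=\operatorname{span}\{\Ucal_*(0)\}$ by simplicity of $\lambda_0$. By the defining property of the affine hyperplane $D$, namely $D\cap\operatorname{span}\{(\Ucal_*(0),\Ucal_*(0))\}=\{(\Ucal_*(0),\Ucal_*(0))\}$, the direction space of $D$ is a complement of this one-dimensional kernel inside $E_+^s\times E_-^u$; hence $(v^s,v^u)\mapsto v^s-v^u$ restricts to a linear isomorphism from the direction space of $D$ onto $E_+^s+E_-^u$. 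Thus $D_{(\Ucal_0^s,\Ucal_0^u)}F$ at the base point is invertible.

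Applying the implicit function theorem in Banach spaces then yields a neighborhood of $(\lambda_0,A_0)$ and smooth maps $(\lambda,A)\mapsto\big(\Ucal_0^s(\lambda,A),\Ucal_0^u(\lambda,A)\big)\in D$, locally unique near $(\Ucal_*(0),\Ucal_*(0))$, solving $Q\iota(\Ucal_0^s,\Ucal_0^u;\lambda,A)=0$, which is precisely the first equation of \eqref{eq:projieq}. The main obstacle is really just the bookkeeping around the affine hyperplane $D$: one must be careful to distinguish $D$ (affine) from its direction space (linear), to check that the base point indeed lies in $D$, and to see that the transversality condition on $D$ is exactly what makes the linearized operator surjective with trivial kernel. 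Everything else — smoothness of $\iota$ and the identification of the range — has already been supplied by Lemma~\ref{lem:exponential-shift} and the computation preceding \eqref{eq:projieq}.
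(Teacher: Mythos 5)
Your proof is correct and follows the same route as the paper, which simply cites \cite[Lem.~9]{laptev2017} and the implicit function theorem; you have filled in exactly the details that citation delegates (smoothness from Lemma~\ref{lem:exponential-shift}, the base point, and the invertibility of the linearization on the direction space of $D$ via $E_+^s\cap E_-^u=\operatorname{span}\{\Ucal_*(0)\}$). No gaps.
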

\begin{proof}
    This follows immediately from \cite[Lem.~9]{laptev2017} by the implicit function theorem.
\end{proof}
Using the variation of constants formula, see e.g. \cite{henry1981}, we get that 
\begin{equation*}
\begin{aligned}
    \iota(\cdot,\cdot;\lambda,A) &= \Ucal_0^s-\Ucal_0^u \\
    +&\int_{-\infty}^0\Phi^{cs}(0,z;\lambda_0,A_0)((\lambda-\lambda_0)N+\tilde{S}(z;A))\Phi^u(z,0;\lambda,A)\Ucal_0^u \dd z \\
    +& \int_0^{\infty}\Phi^{cu}(0,z;\lambda_0,A_0)((\lambda-\lambda_0)N+\tilde{S}(z;A))\Phi^s(z,0;\lambda,A)\Ucal_0^s \dd z,
\end{aligned}
\end{equation*}
where $N:X\to X$ is given by
\begin{equation}
    N=\begin{pmatrix}
        0 & 0 \\
        1 & 0
    \end{pmatrix}
    \label{eq:N-def}
\end{equation}
and $\tilde{S}$ is defined exactly as in \eqref{eq:perturb-shift}, but with $A_{\per}$ replaced with $A_0$, as we are now perturbing from the unperturbed system and not the system at infinity.

To solve the second equation of \eqref{eq:projieq}, we define $F:\R\times(A_{\per}+\Xcal_{\beta}) \to \ker Q$ by 
\begin{equation}
    \begin{aligned}
        F(\lambda,A) &= -\iota(\Ucal_0^s(\lambda,A),\Ucal_0^u(\lambda,A);\lambda,A) = -(I-Q)\iota(\Ucal_0^s(\lambda,A),\Ucal_0^u(\lambda,A);\lambda,A) \\
        &= \int_{-\infty}^0(I-Q)\Phi^{cs}(0,z;\lambda_0,A_0)((\lambda-\lambda_0)N+\tilde{S}(z;A))\Phi^u(z,0;\lambda,A)\Ucal_0^u \dd z \\
    &\quad + \int_0^{\infty}(I-Q)\Phi^{cu}(0,z;\lambda_0,A_0)((\lambda-\lambda_0)N+\tilde{S}(z;A))\Phi^s(z,0;\lambda,A)\Ucal_0^s \dd z.
    \end{aligned}
    \label{eq:F-func}
\end{equation}
Note in particular that $F$ is a smooth function in both arguments and that solving \eqref{eq:projieq} is equivalent to solving $F(\lambda,A)=0$. To solve the equation $F(\lambda,A)=0$, we use the dual space $X^*=H^{-1}(S^1)\times L^2(S^1)$ of $X$ and the adjoint equation for $\lambda=\lambda_0$ as well as $A=A_0$, that is the equation
\begin{equation}
    \Wcal' = -(\Bcal(z;\lambda_0,A_{\per}) + S(z; A_0))^*\Wcal.
    \label{eq:adjoint-eq}
\end{equation}
The adjoint system possesses exponential dichotomies on $\R_+$ and $\R_-$ denoted by $\Psi^s(z,z_0;\lambda_0,A_0)$, $\Psi^{cu}(z,z_0;\lambda_0,A_0)$ and $\Psi^{cs}(z,z_0;\lambda_0,A_0)$, $\Psi^u(z,z_0;\lambda_0,A_0)$, respectively. They are related to the exponential dichotomies of the unperturbed system in \eqref{eq:eigenvalue-spatial-dynamics}, with $\lambda=\lambda_0$ and $A=A_0$,  in the following way
 \begin{equation*}
     \begin{aligned}
     \Psi^s(z,z_0;\lambda_0,A_0) = \Phi^{cu}(z_0,z;\lambda_0,A_0)^*, \quad  \Psi^{cu}(z,z_0;\lambda_0,A_0) = \Phi^{s}(z_0,z;\lambda_0,A_0)^*, \\
     \Psi^{cs}(z,z_0;\lambda_0,A_0) = \Phi^{u}(z_0,z;\lambda_0,A_0)^*, \quad  \Psi^u(z,z_0;\lambda_0,A_0) = \Phi^{cs}(z_0,z;\lambda_0,A_0)^*,
     \end{aligned}
 \end{equation*}
 where $z_0$, $z$ are in the appropriate intervals. It is straightforward to verify that $\Ucal_*^{\perp} = (-u_*',u_*)^T$ solves the adjoint equation \eqref{eq:adjoint-eq} and is exponentially decaying as $\lvert z\rvert \to \infty$. For $\Ucal^s\in E_+^s$ and $\Ucal^u\in E_-^u$ it is easy to see that 
 \begin{equation*}
    \begin{aligned}
    \dfrac{d}{dz}\langle \Ucal_*^{\perp}(z), \Phi^s(z,0;\lambda_0,A_0)\Ucal^s \rangle &= \dfrac{d}{dz}\langle \Psi^s(z,0;\lambda_0,A_0)\Ucal_*^{\perp}(0), \Phi^s(z,0;\lambda_0,A_0)\Ucal^s \rangle = 0, \\
    \dfrac{d}{dz}\langle \Ucal_*^{\perp}(z), \Phi^u(z,0;\lambda_0,A_0)\Ucal^u \rangle &= \dfrac{d}{dz}\langle \Psi^u(z,0;\lambda_0,A_0)\Ucal_*^{\perp}(0), \Phi^u(z,0;\lambda_0,A_0)\Ucal^u \rangle = 0.
    \end{aligned}
\end{equation*}
Moreover, since $\Ucal_*^{\perp}(z) \to 0$ as $\lvert z \rvert \to \infty$ and $\Phi^s(z,0;\lambda_0,A_0)\Ucal^s$ and $\Phi^u(z,0;\lambda_0,A_0)\Ucal^u$ are bounded on $\R_+$ and $\R_-$ respectively, it follows that $\langle \Ucal_*^{\perp}(0),\Ucal^s + \Ucal^u\rangle=0$. This also implies that for any $\Ucal\in X$ we have that $\langle \Ucal_*^{\perp}(0),Q\Ucal\rangle = \langle Q^*\Ucal_*^{\perp}(0),\Ucal\rangle = 0$, and so it follows that $\Ucal_*^{\perp}(0)\in\ker Q^*$.

\begin{lemma}\label{lem:lambda-der}
    The equation $\langle \Ucal_*^{\perp}(0), F(\lambda,A)\rangle = 0$ defines a smooth function $\lambda(A)$ in a neighborhood of $A=A_0$ such that $\lambda(A_0)=\lambda_0$. Furthermore, for any $B\in \Xcal_{\beta}$, 
    \begin{equation}
        \lambda'(A_0)B=-\int_{-\infty}^{\infty}\langle u_*(z),(-2iB(z,\cdot)\partial - iB_{\phi}'(z,\cdot) + 2A_0(z,\cdot)B(z,\cdot))u_*(z)\rangle_{L^2(S^1)} \dd z.
        \label{eq:lambda_expr}
    \end{equation}
\end{lemma}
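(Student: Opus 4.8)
The plan is to apply the implicit function theorem to the scalar equation $g(\lambda,A) := \langle \Ucal_*^{\perp}(0), F(\lambda,A)\rangle = 0$. First I would observe that $g$ is smooth in $(\lambda,A)$ near $(\lambda_0,A_0)$, since $F$ is smooth by the previous discussion and the pairing with the fixed vector $\Ucal_*^{\perp}(0)$ is continuous (indeed $\Ucal_*^{\perp}(0)\in X^*$). Next I would verify that $g(\lambda_0,A_0)=0$: at $(\lambda_0,A_0)$ we have $\tilde S(z;A_0)=0$ and $\lambda-\lambda_0 = 0$, so the integrands in \eqref{eq:F-func} vanish identically and $F(\lambda_0,A_0)=0$, hence $g(\lambda_0,A_0)=0$.

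The crux is to compute $\partial_\lambda g(\lambda_0,A_0)$ and show it is nonzero, so that the implicit function theorem yields a smooth solution branch $\lambda(A)$ with $\lambda(A_0)=\lambda_0$. Differentiating \eqref{eq:F-func} in $\lambda$ at $(\lambda_0,A_0)$, the terms where the $\lambda$-derivative hits $\Phi^s$, $\Phi^u$, or the implicitly defined $\Ucal_0^s(\lambda,A),\Ucal_0^u(\lambda,A)$ all produce a factor $((\lambda-\lambda_0)N + \tilde S(z;A))$ evaluated at $(\lambda_0,A_0)$, which is $0$; hence only the term where $\partial_\lambda$ hits the explicit factor $(\lambda-\lambda_0)N$ survives, giving
\begin{equation*}
    \partial_\lambda g(\lambda_0,A_0) = \Bigl\langle \Ucal_*^{\perp}(0),\ \int_{-\infty}^0 (I-Q)\Phi^{cs}(0,z)N\Phi^u(z,0)\Ucal_*(0)\dd z + \int_0^{\infty}(I-Q)\Phi^{cu}(0,z)N\Phi^s(z,0)\Ucal_*(0)\dd z\Bigr\rangle,
\end{equation*}
all evolution operators at $(\lambda_0,A_0)$, and using $\Ucal_0^s(\lambda_0,A_0)=\Ucal_0^u(\lambda_0,A_0)=\Ucal_*(0)$. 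Since $\Ucal_*^{\perp}(0)\in\ker Q^*$ I can drop $(I-Q)$ (i.e.\ $\langle \Ucal_*^{\perp}(0),(I-Q)\cdot\rangle = \langle \Ucal_*^{\perp}(0),\cdot\rangle$), and then move $\Phi^{cs}(0,z)^*$, $\Phi^{cu}(0,z)^*$ onto $\Ucal_*^{\perp}(0)$ via the adjoint relations, turning the integrand into $\langle \Ucal_*^{\perp}(z), N\Ucal_*(z)\rangle_X$ on each half-line (because $\Phi^{u,s}(z,0)\Ucal_*(0) = \Ucal_*(z)$ and $\Phi^{cu,cs}(0,z)^*\Ucal_*^{\perp}(0)=\Ucal_*^{\perp}(z)$). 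Writing $\Ucal_* = (u_*,u_*')^T$ and $\Ucal_*^{\perp} = (-u_*',u_*)^T$ and using $N = \begin{psmallmatrix}0&0\\1&0\end{psmallmatrix}$ gives $\langle \Ucal_*^{\perp}(z), N\Ucal_*(z)\rangle_X = \langle u_*(z), u_*(z)\rangle_{L^2(S^1)} = \|u_*(z)\|_{L^2(S^1)}^2 \ge 0$, so $\partial_\lambda g(\lambda_0,A_0) = \int_{\R}\|u_*(z)\|^2_{L^2(S^1)}\dd z = \|u_*\|_{L^2(\R\times S^1)}^2 = 1 \neq 0$ by the normalization of $u_*$.

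Having the nonvanishing $\lambda$-derivative, the implicit function theorem gives a unique smooth $\lambda(A)$ near $A_0$ with $\lambda(A_0)=\lambda_0$ solving $g(\lambda(A),A)=0$. For the derivative formula \eqref{eq:lambda_expr}, I differentiate $g(\lambda(A),A)=0$ in the direction $B\in\Xcal_\beta$ at $A_0$: by the chain rule $\partial_\lambda g(\lambda_0,A_0)\,\lambda'(A_0)B + D_A g(\lambda_0,A_0)B = 0$, and since $\partial_\lambda g(\lambda_0,A_0)=1$ we get $\lambda'(A_0)B = -D_Ag(\lambda_0,A_0)B$. Computing $D_A g$: the $A$-derivative again kills all terms except the one where the differentiation hits $\tilde S(z;A)$ explicitly (all other terms carry the vanishing factor $((\lambda_0-\lambda_0)N+\tilde S(z;A_0)) = 0$), so $D_Ag(\lambda_0,A_0)B = \langle \Ucal_*^{\perp}(0),\int_{\R}(I-Q)\,[\text{evol.}]\,(D_A\tilde S(z;A_0)B)\,\Ucal_*(z)\dd z\rangle$; transferring the adjoint evolution operators onto $\Ucal_*^{\perp}(0)$ as before turns this into $\int_{\R}\langle \Ucal_*^{\perp}(z),(D_A\tilde S(z;A_0)B)\Ucal_*(z)\rangle_X\dd z$. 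Reading off $D_A\tilde S(z;A_0)B$ from the formula for $S$ in \eqref{eq:perturb-shift} (with $A_{\per}$ replaced by $A_0$), its only nonzero block is the bottom-left entry $-2iB(z,\cdot)\partial - iB'_\phi(z,\cdot) + 2A_0(z,\cdot)B(z,\cdot)$, and pairing with $\Ucal_*^{\perp} = (-u_*',u_*)^T$, $\Ucal_* = (u_*,u_*')^T$ picks out $\langle u_*(z),(-2iB\partial - iB'_\phi + 2A_0B)u_*(z)\rangle_{L^2(S^1)}$, yielding \eqref{eq:lambda_expr} after the overall sign. The main obstacle is the bookkeeping of which terms survive each differentiation and carefully justifying that one may differentiate under the integral sign (using the decay estimate \eqref{eq:intest}-type bounds, the exponential bounds on the dichotomy evolution operators, and the exponential decay of $\Ucal_*$ from Lemma \ref{lem:exp-dec}); the algebraic identification of $\langle\Ucal_*^{\perp},N\Ucal_*\rangle$ with $\|u_*\|^2$ and of the $A$-derivative term is then routine.
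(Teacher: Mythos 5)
Your proposal is correct and follows essentially the same route as the paper: rewrite $\langle \Ucal_*^{\perp}(0),F(\lambda,A)\rangle$ using the adjoint dichotomy relations, check $F_*(\lambda_0,A_0)=0$, compute $\partial_\lambda F_*(\lambda_0,A_0)=\int_{\R}\|u_*(z)\|_{L^2(S^1)}^2\dd z=1$, apply the implicit function theorem, and obtain \eqref{eq:lambda_expr} by implicit differentiation. You in fact supply more detail than the paper (which defers the derivative computations to \cite[Lem.~4.4]{maadsasane2024}), e.g.\ the observation that all terms where the derivative hits the evolution operators or $\Ucal_0^{s,u}$ carry the vanishing factor $(\lambda-\lambda_0)N+\tilde S(z;A)$ at $(\lambda_0,A_0)$, and the justification for dropping $(I-Q)$ via $\Ucal_*^{\perp}(0)\in\ker Q^*$.
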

\begin{proof}
    We have that
    \begin{equation*}
        \begin{aligned}
            0 &= F_*(\lambda,A)\coloneqq \langle\Ucal_*^{\perp}(0),F(\lambda,A)\rangle \\
            &= \int_{-\infty}^0 \langle \Ucal_*^{\perp}(0),\Phi^{cs}(0,z;\lambda_0,A_0)((\lambda-\lambda_0)N + \tilde{S}(z;A))\Phi^u(z,0;\lambda,A)\Ucal_0^u(\lambda,A)\rangle \dd z \\
            &\quad +\int_0^{\infty}\langle\Ucal_*^{\perp}(0),\Phi^{cu}(0,z;\lambda_0,A_0)((\lambda-\lambda_0)N+\tilde{S}(z;A))\Phi^s(z,0,\lambda,A)\Ucal_0^s(\lambda,A)\rangle \dd z \\
            &= \int_{-\infty}^0 \langle \Ucal_*^{\perp}(z),((\lambda-\lambda_0)N + \tilde{S}(z;A))\Phi^u(z,0;\lambda,A)\Ucal_0^u(\lambda,A)\rangle \dd z \\
            &\quad +\int_0^{\infty}\langle\Ucal_*^{\perp}(z),((\lambda-\lambda_0)N+\tilde{S}(z;A))\Phi^s(z,0,\lambda,A)\Ucal_0^s(\lambda,A)\rangle \dd z.
        \end{aligned}
    \end{equation*}
    By Lemma \ref{lem:exponential-shift} it follows that $F_*$ is a smooth function of $\lambda$ and $A$ in a neighborhood of $(\lambda_0,A_0)$. Moreover, $F_*(\lambda_0,A_0)=0$ and
\begin{equation*}
    \dfrac{\partial F_*}{\partial\lambda}(\lambda_0,A_0)=\int_{-\infty}^{\infty}\langle\Ucal_*^{\perp}(z),N\Ucal_*(z)\rangle \dd z = \int_{-\infty}^{\infty}\lVert u_*(z)\rVert_{L^2(S^1)}^2 \dd z = 1,
\end{equation*}
by the normalization of $u_*$. This allows for the application of the implicit function theorem to solve for $\lambda$ giving it as a function of $A$ in a neighborhood of $A=A_0$ with $\lambda(A_0)=\lambda_0$. Differentiation of the function $F_*(\lambda(A),A)=0$ and evaluating in $A=A_0$ gives the expression in \eqref{eq:lambda_expr}. For more details on the computation of the derivatives, see \cite[Lem. ~4.4]{maadsasane2024} as it follows similarly.
\end{proof}
We are now ready to prove Theorem \ref{thm:main}.
\begin{proof}[Proof of Theorem \ref{thm:main}]
Since $\dim(\ker Q^*)=2m+1$, and we know that $\Ucal_*^{\perp}(0)\in\ker Q$, we define $\Wcal_k(0)\in X'$, k=1,\dots,2m, to be such that $\{\Wcal_k(0);\hspace{2mm} k=1,\dots,2m\}\cup\{\Ucal_*^{\perp}(0)\}$ is a basis for $\ker Q^*$. Let $\Wcal_k$ be the solutions of the unperturbed system with initial value $\Wcal_k(0)$ and define $F_k:A_{\per} +\Xcal_{\beta}\to \R$ by
\[F_k(A)=\langle\Wcal_k(0),F(\lambda(A),A)\rangle, \quad \text{for } k=1,\dots,2m,\]
with $F$ as in \eqref{eq:F-func}. All $F_k$ are clearly smooth, since $F$ is by Lemma \ref{lem:exponential-shift}. Further if $F_k(A)=0$ for some $A\in A_{\per} +\Xcal_{\beta}$ for all $k=1,\dots,2m$, then $F(\lambda(A),A)=0$ since $\{\Wcal_k(0);\hspace{2mm} k=1,\dots,2m\}\cup\{\Ucal_*^{\perp}(0)\}$ constitutes a basis for $\ker Q^*$. The converse clearly holds as well.

In order to prove the theorem, we show that there is a manifold of perturbations $A$ with codimension $2m$ defined by the equations $F_k(A)=0$ for $k=1,\dots,2m$. Using the notation
\begin{equation*}
    \Ucal(z;A) \coloneqq \begin{cases}
        \Phi^s(z,0;\lambda(A),A)\Ucal_0^s(\lambda(A),A), \quad \text{for } z\geq 0, \\
        \Phi^u(z,0;\lambda(A),A)\Ucal_0^u(\lambda(A),A), \quad \text{for } z< 0,
    \end{cases}
\end{equation*}
we get that
\begin{equation*}
\begin{aligned}
F_k&(A) = \int_{-\infty}^\infty \langle \Wcal_k(z), ((\lambda(A) - \lambda_0)N + \tilde{S}(z; A))\Ucal(z; A) \rangle dz \\
&= \int_{-\infty}^\infty \langle w_k(z), (\lambda(A) - \lambda_0 -2i\big(A-A_{0})\partial + (A_{0}+A)(A-A_{0}) - i\big(A-A_{0}\big)_{\varphi}')u(z; A) \rangle_{L^2(S^1)} dz,
\end{aligned}
\end{equation*}
where $W_k = (-w_k', w_k)^T$, $u(z; A)$ is the first component of $U(z; A)$ and $N$ as in \eqref{eq:N-def}.

Next, we wish to prove that $F_k'(A_0)$, $k = 1,\dots, 2m$ are linearly independent. Using \eqref{lem:hyperplane} and Lemma \ref{lem:lambda-der}, we get that, for every $B\in \Xcal_{\beta}$
\begin{equation*}
    \begin{aligned}
        F_k'(A_0)B = % &= \int_{-\infty}^{\infty}\langle w_k(z),([\lambda'(A_0)B]-  2iB\partial_{\varphi}+2A_{\per}B+2(A_0-A_{\per})B-i\partial_{\varphi}B)u_*(z)\rangle_{L^2(S^1)}\dd z \\ &=
        \int_{-\infty}^{\infty}\langle w_k(z),([\lambda'(A_0)B]-  2iB\partial+2A_0B-iB_{\varphi}')u_*(z)\rangle_{L^2(S^1)}\dd z
    \end{aligned}
\end{equation*}
with $\lambda'(A_0)B$ as in $\eqref{eq:lambda_expr}$. Now assume that there exists $\alpha_1,\dots,\alpha_m\in\C$ such that 
\begin{equation*}
    \begin{aligned}
        0 &= \sum_{k=1}^{2m}\alpha_kF_k'(A_0)B \\
        &= \int_{-\infty}^{\infty}\langle w(z),([\lambda'(A_0)B]-  2iB\partial+2A_0B-iB_{\varphi}')u_*(z)\rangle_{L^2(S^1)}\dd z \\
        &= \int_{-\infty}^{\infty}\langle w(z)+ \alpha_*u_*(z),(-  2iB\partial+2A_0B-iB_{\varphi}')u_*(z)\rangle_{L^2(S^1)}\dd z,
    \end{aligned}
\end{equation*}
for every $B\in \Xcal_{\beta}$. Here,
\begin{equation*}
    w=\sum_k^{2m}\alpha_kw_k,
\end{equation*}
together with
\begin{equation*}
    \alpha_* \coloneqq \int_{-\infty}^{\infty}\overline{\langle w(z),u_*(z)\rangle_{L^2(S^1)}}\dd z.
\end{equation*}
Using integration by parts in the $\varphi$-variable, this becomes 
\begin{equation*}
    \begin{aligned}
        \int_{-\infty}^{\infty}&\int_{S^1}\Big(i\partial_{\varphi}\left(u_*(z,\varphi)\overline{(w(z,\varphi)+\alpha_*u_*(z,\varphi))}\right) - 2iu'_{\varphi}(z,\varphi)\overline{(w(z,\varphi)+\alpha_*u_*(z,\varphi))} \\
        &+ 2A_0(z,\varphi)u_*(z,\varphi)\overline{(w(z,\varphi)+\alpha_*u_*(z,\varphi))}\Big)B(z,\varphi)\dd\varphi\dd z,
    \end{aligned}
\end{equation*}
for every $B\in \Xcal_{\beta}$. Since $\Xcal_{\beta}$ is dense in $L^2(\R\times S^1;\R)$, we get that, after rewriting
\begin{equation}
\begin{aligned}
u_*(z,\varphi)&\overline{(w(z,\varphi)+\alpha_*u_*(z,\varphi))} \\ 
&- \left(u_{*\varphi}'(z,\varphi) + 2iA_0(z,\varphi)u_*(z,\varphi)\right)\overline{(w(z,\varphi)+\alpha_*u_*(z,\varphi))}=0,
\end{aligned}
\label{eq:zero-eq}
\end{equation}
for every $(z,\varphi)\in\R\times S^1$, since $u_*$ and $w$ are continuously differentiable. We now wish to prove that $w+\alpha_*u_*\equiv 0$. This will be done by first proving that the function vanishes on an open set, followed by unique continuation, yielding the result. We know by \ref{it:A.2} that there exists a $z_0\in\R$ such that $\int_{S^1}A_0(z_0,\varphi)\dd\varphi \neq \int_{S^1}A_{\per}(z_0,\varphi)\dd\varphi$. 

However, since clearly 
\begin{equation*}
    \left\lvert\int_{S^1}A_0(z,\varphi)\dd\varphi - \int_{S^1}A_{\per}(z,\varphi)\dd\varphi\right\rvert \longrightarrow 0
\end{equation*}
as $\lvert z\rvert\to\infty$, it follows by continuity and the intermediate value theorem that we can choose $z_0$ in the way we want. By continuity, there exists an open interval $I$ such that $z_0\in I$ and $\int_{S^1}A_0(z_0,\varphi)\dd\varphi\neq k\pi$ for all $z\in I$.
We now wish to prove that either $w+\alpha_*u_*$ or $u_*$ vanishes on an open interval. This however now follows exactly in the same way as in \cite{laptev2017}. Then, by unique continuation, it follows that it has to vanish on the entire $\mathbb{R}\times S^1$. However $u_*$ is an eigenfunction by assumption, and so it cannot vanish everywhere, giving that $w+\alpha_*u_* \equiv 0$ on $\R\times S^1$. Thus, it follows that $\alpha_k=0$ for all $k=1,\dots,2m$, implying that $F_k'(A_0)$, $k=1,\dots,2m$ are linearly independent. The final result is then obtained in the following way:

Define $\mathcal{F}: A_{\per} + \Xcal_{\beta}\to \C^{2m}$ by
\[\mathcal{F}(A)\coloneqq \left(F_1(A),\dots,F_{2m}(A)\right)^T.\]
It follows by the linear independence of $\mathcal{F}'(A_0)$ that we can decompose the space $\Xcal_{\beta}$ in the following way
\[\Xcal_{\beta}=\left(\ker \mathcal{F}'(A_0) \right)\oplus \mathcal{M},\]
where $\mathcal{M}$ is $2m$-dimensional. The map $\mathcal{F}'(A_0)$ is clearly surjective. In light of the decomposition $\Xcal_{\beta}$, we write for any $B\in \Xcal_{\beta}$ $B=B_1+B_2$, where $B_1\in\ker\mathcal{F}'(A_0)$ 
and $B_2\in\mathcal{M}$. We may then define 
\[\mathcal{G}:\left(\ker \mathcal{F}'(A_0) \right)\oplus \mathcal{M}\to \C^{2m}\]
by 
\[\mathcal{G}(B_1,B_2)=\mathcal{F}(B_1+B_2+A_0).\]
The Fréchet derivative of $\mathcal{G}$ at $(0,0)$ with respect to $B_2$ denoted by $\partial_{A_2}\mathcal{G}(0,0)$ is invertible and hence allows for the application of the implicit function theorem, solving for $B_2$ in terms of $B_1$, defining a smooth manifold of dimension $2m$ in a neighborhood of $A=A_0$. For more explicit details on the construction, see the proof of Theorem 1.3 in \cite{maadsasane2024}, as it is identical. 
\begin{comment}
We wish to prove that for every $z\in I$, either $w(z,\cdot)+\alpha_*u_*(z,\cdot)$ or $u_*(z,\cdot)$ is identically zero on an interval. Aiming for a contradiction, we assume that there exists a $z_1\in I$ such that neither $w(z_1,\cdot)+\alpha_*u_*(z_1,\cdot)$ or $u_*(z_1,\cdot)$ vanishes on any interval. Then we may pick an $\varphi_0\in S^1$ such that $w(z_1,\varphi_0)+\alpha_*u_*(z_1,\varphi_0)\neq 0$. Multiplying \eqref{eq:zero-eq} by
\begin{equation*}
    \dfrac{\exp\left(2i\int_{\varphi_0}^{\varphi}A_0(z_1,\theta)\dd\theta\right)}{\overline{w(z_1,\varphi)+\alpha_*u_*(z_1,\varphi)}},
\end{equation*}
where, as a consequence of the assumptions, the numerator is always non-zero and the denominator is non-zero in a neighborhood of $\varphi_0$. After multiplying and rewriting \eqref{eq:zero-eq}, we get that, with $z=z_1$ in a neighborhood of $\varphi_0$, 
\begin{equation*}
    \dfrac{\partial}{\partial\varphi}\left(\dfrac{\exp\left(2i\int_{\varphi_0}^{\varphi}A_0(z_1,\theta)\dd\theta\right)u_*(z_1,\varphi)}{\overline{w(z_1,\varphi)+\alpha_*u_*(z_1,\varphi)}}\right)=0.
\end{equation*}
Thus, it follows that 
\begin{equation*}
    \dfrac{\exp\left(2i\int_{\varphi_0}^{\varphi}A_0(z_1,\theta)\dd\theta\right)u_*(z_1,\varphi)}{\overline{w(z_1,\varphi)+\alpha_*u_*(z_1,\varphi)}} = \dfrac{u_*(z_1,\varphi_0)}{\overline{w(z_1,\varphi_0)+\alpha_*u_*(z_1,\varphi_0)}},
\end{equation*}
for $\phi$ in a neighborhood of $\phi_0$.
\end{comment}
\end{proof}

\section{Example}\label{sec:ex}
The following example shows that there exist operators satisfying the conditions \ref{it:A.1}--\ref{it:A.4}. For simplicity, we consider an operator whose potential is independent of $\varphi$. We emphasize that the theorems in the paper do not rely on the $\phi$-independence of $A_0$. 
\begin{example}
   Let $A_{per}(z)=\cos z$. We will find a potential $A_0$ such that $\lvert A_0(z) - A_{per}(z)\rvert\to 0$ as $|z|\to \infty$.
   For simplicity, we will construct the embedded eigenvalue for a fairly small $\lambda_0$. Examples for other values of $\lambda_0$ can be constructed with the same method used here, but with more work if $\lambda_0$ is large. 
   The essential spectra of $A_0$ and $A_{per}$ on $L^2(\mathbb R\times S^1)$ are the same due to Weyl's theorem, and so we establish the essential spectrum of $A_{per}$ first. It is generated by functions of the form $v(z) e^{i m\varphi}$, $m\in \mathbb Z$, where
   $v$ is a bounded solution of
   \begin{equation}\label{E:example-ode}
      -v''(z)+(m+\cos z)^2 v(z) = \lambda v(z).
   \end{equation}
\end{example}
The essential spectrum is the closure of the set of numbers $\lambda$ for which such solutions exist. %\textcolor{magenta}{(Reference: paper by Barry Simon?)}
For a fixed $m$, we obtain bounded solutions $v$, when $\lambda$ belongs to the essential spectrum of the operator
\begin{equation*}
   \ell_m = -\dfrac{d^2}{dz^2} + (m + \cos z)^2
\end{equation*}
on $L^2(\mathbb R)$.
Such an operator has essential spectrum with a band structure, i.e. it consists of a countable union of closed intervals \cite[Thm.~5.3.2]{eastham1973} which is contained in a half-line $[\mu_m,\infty)$, where $\mu_m\in [\min(m+\cos z)^2,\max(m+\cos z)^2]$. In particular, $\mu_m\in [0,1]$ if $m=0$ while $\mu_m\in [(|m|-1)^2,(|m|+1)^2]$ if $m\ne 0$. The start and endpoints of each of these closed intervals can be computed more accurately numerically by computing the discriminant $\gamma(\lambda)=\frac{1}{2}(\phi_1(p)+\phi_2'(p))$, where $p$ is the period of the potential ($p=\pi$ for $m=0$ and $p=2\pi$ for $m\geq 1$) and $\phi_i$ are the two linearly independent solutions of the periodic ODE satisfying the initial conditions $\phi_1(0)=1$, $\phi_1'(0)=0$ and $\phi_2(0)=0$, $\phi_2'(0)=1$, respectively, and solving for the intersections of the graph of $\gamma$ with the lines $y=1$ and $y=-1$. This follows from the fact that $\lambda$ belongs to the continuous spectrum precisely when $\lvert\gamma(\lambda)\rvert\leq 1$, see \cite[Thm. ~5.3.2]{eastham1973}. It can also be shown that there exists a $\Lambda$ such that $\gamma(\lambda)>1$ for all $\lambda< \Lambda$, meaning that the first intersection of $\gamma$ with $y=1$ should give the start of the first closed interval. Additionally, it can be shown that the endpoint of the interval is at the first intersection of $\gamma$ with $y=-1$ (if it exists, otherwise the spectrum continues to $+\infty$), after which the second interval starts again at the next intersection of $\gamma$ with $y=-1$ and ends at the second intersection of $\gamma$ with $y=1$ and then repeats in a similar fashion (assuming that there are intersections so that the bands are not infinitely long) \cite[Thm.~2.3.1]{eastham1973}.

As we focus on the essential spectrum in a neighborhood of our embedded eigenvalue and we want to consider small values of $\lambda_0$, we only need to consider $m=0$ and $m=\pm 1$, as the spectra of the other ODE operators should not contain $\lambda_0$, if it is small enough. Indeed, for  $|m|=2$, the first band starts after $\lambda=2$, as can be computed numerically, and for other $m$, the first band starts even further away. We also note that the operators with $m=1$ and $m=-1$ have the same essential spectra.    

For $m=0$, as $\cos^2 z=(1+\cos 2z)/2$, the essential spectrum of this ordinary differential operator is the spectrum of the Schrödinger operator with a Mathieu potential, shifted by $1/2$ to the right. Hence, the first band of the essential spectrum for the ODE operator for $m=0$ starts at $\mu_0\approx 0.469$ and ends at a point which is approximately $1.242$. 

For $m=\pm 1$ the first band is starts around $\mu_m\approx 0.564$ and ends at approximately $0.572$  and the second one starts around $1.88$.

Going back to the operator $A_{per}$, it has essential spectrum which is the union of the essential spectra of the above ODE operators for $m\in\Z$. 

To construct the operator $A_0$, we will pick a $\lambda_0$ which belongs to the essential spectrum of the ODE operator associated with $m=0$, but not in the essential spectrum for the ODE operators associated with $m=\pm 1$. For example, we may choose $\lambda_0 = 0.5$. See Figure \ref{fig:spec} below for the spectral picture.
\begin{figure}[H]
\begin{center}
    \begin{tikzpicture}[domain=-1:2, samples=500,scale =30]

  \draw[->] (0.45,0) -- (0.6,0) node[right] {$\real\lambda$};
  \draw[red, very thick,-] (0.469,0) -- (0.6,0);
  \node at (0.69501,0) {$m=0$};
  %\draw[red, very thick,-] (1.741,0) -- (1.9,0);

  \draw[->] (0.45,-1/30) -- (0.6,-1/30) node[right] {$\real\lambda$};
  \draw[red, very thick,-] (0.564,-1/30) -- (0.572,-1/30);
  %\draw[red, very thick,-] (1.88,-1/10) -- (1.9,-1/10);
  \draw[-] (0.5-1/300,-1/30+1/300) -- (0.5+1/300,-1/30-1/300) node[below] {$\lambda_0$};
  \draw[-] (0.5-1/300,-1/30-1/300) -- (0.5+1/300,-1/30+1/300);
  \node at (0.7,-1/30) {$m=\pm 1$};

  %\draw[color=black]    plot (\x,{(0.1*(1+cos((\x-pi) r)))^2 + 2*(0.1*(1+cos((\x-pi) r)))*(3^2-6*exp(-(8*\x*\x)))^(1/2) -6*exp(-(8*\x*\x))});          
  % \x r means to convert '\x' from degrees to _r_adians:
  \end{tikzpicture}\\~\
\end{center}
\caption{Spectra of the ODE operators corresponding to $m=0$ and $m=\pm 1$ respectively together with $\lambda_0$.}
\label{fig:spec}
\end{figure}
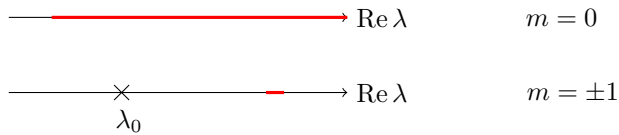

It follows from \cite[Thm.~1.1.2]{eastham1973} that there are linearly independent solutions $\psi_1$ and $\psi_2$ of the ODEs such that either $\psi_i(z)=e^{\alpha_iz}q_i(z)$, where $\alpha_i$ constants, not necessarily distinct and $q_i$ are periodic with the same period as the potential, or $\psi_1(z)=e^{\alpha z}q_1(z)$, $\psi_2(z)=e^{\alpha z}(zq_1(z)+q_2(z))$. Here, $\alpha_i$ are defined as $e^{\alpha_i p}=\rho_i$, where $\rho_i$ are the eigenvalues of the matrix
\[M=\begin{pmatrix}
    \phi_1(p) & \phi_2(p) \\
    \phi_1'(p) & \phi_2'(p)
\end{pmatrix},\]
where $\phi_1,\phi_2$ are the two linearly independent solutions from above, satisfying the initial conditions $\phi_1(0)=\phi_2'(0)=1$, $\phi_1'(0)=\phi_2(0)=0$. Note in particular that in this case $\det M=1$, and so $\rho_2=\frac{1}{\rho_1}$.
By letting $m=1$ and $\lambda_0=0.5$, we may now compute the $\alpha_i$ numerically, obtaining approximately $\pm 0.5835$. Now by denoting $\alpha=\lvert\alpha\rvert$ and defining 
\[f_{\pm}(z)=e^{\pm\alpha z}q_{\pm}(z),\]
where $q_+$ and $q_-$ are the periodic functions corresponding to the exponents $+\alpha$ and $-\alpha$, respectively. Since $f_{\pm}$ are two linearly independent solutions, they satisfy $f_{\pm}''(z)= \left((m+A_{\per}(z))^2-\lambda_0\right)f_{\pm}(z)$ with $f_{\pm}(z)$ decaying exponentially at $z\to\pm\infty$ respectively, we may define
\[v_*(z)=\dfrac{1}{2}(1-\tanh(\beta z))f_+(z)+ \dfrac{1}{2}(1+\tanh(\beta z))f_-(z), \quad \text{ for some } \beta >0.\]
Numerically, these functions can by found by solving for the eigenvectors of the matrix $M$ and then using them as initial conditions when solving the ODE.
Due to the symmetries of the equation, it should be noted that if we define
\[f_+(z)=f(z)=e^{\alpha z}q(z),\]
and $\real\alpha>0$, it follows that 
\[f_-(z)=f(-z)\]
is also a solution to the equation, with $f_+,f_- >0$ and linearly independent.

The solution in our case will look something like in Figure \ref{fig:eigenfct} below.

\begin{figure}[H]
    \centering
    \includegraphics[width=0.75\linewidth]{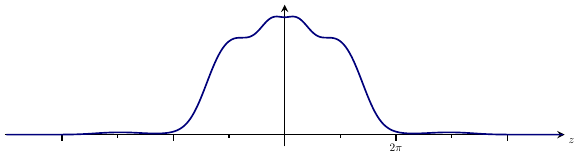}
    \caption{Eigenfunction of $\ell_m$ for $m=1$ corresponding to the eigenvalue $\lambda_0=0.5$.}
    \label{fig:eigenfct}
\end{figure}

Note that $v_*$ is bounded and decays exponentially to $0$ as $\lvert z\rvert\to 0$ and that, after some calculations, 
\begin{equation*}
    \dfrac{v_*''(z)}{v_*(z)}=\dfrac{2\beta\operatorname{sech}^2(\beta z)\left(\beta\tanh(\beta x)f_+(z)-f_+'(z)   -(\beta\tanh(\beta z)f_-(z) - f_-'(z))\right)}{(1-\tanh(\beta z))f_+(z)+(1+\tanh(\beta z))f_-(z)} +(m+A_{\per})^2 - \lambda_0.
\end{equation*}
Plugging in that $f_{\pm}'(z) = \pm\alpha e^{\pm\alpha z}q_{\pm}(z) + e^{\pm\alpha z}q_{\pm}'(z) = \pm\alpha f_{\pm}(z) + e^{\pm\alpha z}q_{\pm}'(z)$, we get
\footnotesize
\begin{equation*}
\begin{aligned}
    \dfrac{v_*''(z)}{v_*(z)}&= \dfrac{2\beta\operatorname{sech}^2(\beta z)\left(\left((\beta\tanh(\beta x)-\alpha)e^{\alpha z}q_{+}(z)  -(\beta\tanh(\beta z)-\alpha)e^{-\alpha z}q_{-}(z)\right) - (e^{\alpha z}q_{+}'(z) + e^{-\alpha z}q_{-}'(z)) \right)}{(1-\tanh(\beta z))e^{\alpha z}q_{+}(z)+(1+\tanh(\beta z))e^{-\alpha z}q_{-}(z)}\\
    &\quad +(m+A_{\per})^2 - \lambda_0. \\
\end{aligned}
\end{equation*}
\normalsize
Using the value $\beta=1$, we can write the localized part of the potential as 
\footnotesize
\begin{equation*}
    \begin{aligned}
        \dfrac{v_*''(z)}{v_*(z)}&-(m+A_{\per}(z))^2+\lambda_0 = S(z) \\
        &= \dfrac{2\operatorname{sech}^2(z)\left((f_+(z)+f_-(z) - ( \alpha e^{\alpha z}q_{+}(z) + e^{\alpha z}q_{+}'(z) - \alpha e^{-\alpha z}q_{-}(z) + e^{-\alpha z}q_{-}'(z)) -2v(z) \right)}{2v(z)}.
    \end{aligned}
\end{equation*}
\normalsize
But $f_{\pm}(z)=\pm \alpha e^{\pm\alpha z}q_{\pm}(z) + e^{\pm\alpha z}q_{\pm}'(z)$, and so we obtain
\begin{equation*}
    \begin{aligned}
        S(z) &= \dfrac{2\operatorname{sech}^2(z)\left((f_+(z)+f_-(z) -  f_+'(z) - f_-'(z) -2v(z) \right)}{2v_*(z)}. \\
    \end{aligned}
\end{equation*}
Note additionally that
\begin{equation*}
    \begin{aligned}
        \lvert S(z)\rvert \leq C e^{-(2-\alpha)\lvert z\rvert} \quad \text{ for all } z\in\R,
    \end{aligned}
\end{equation*}
and so clearly $S(z)\to 0$ as $\lvert z\rvert \to \infty$ exponentially as $\alpha <2$.

Then it follows that
\begin{equation*}
    \begin{aligned}
        (m+A_0(z))^2 = \dfrac{v_*''(z)}{v_*(z)}+\lambda_0 = (m+A_{\per}(z))^2+S(z),
    \end{aligned}
\end{equation*}
as illustrated in Figure \ref{fig:ptsq}.

\begin{figure}[H]
    \centering
    \includegraphics[width=0.75\linewidth]{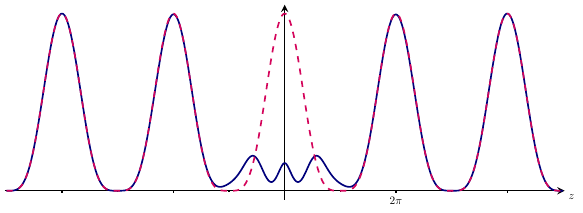}
    \caption{Potential of $\ell_m$ with the desired properties. The red dotted line is $\left(m+\cos(z)\right)^2$.}
    \label{fig:ptsq}
\end{figure}

Finally, solving for $A_0$, we get that

\begin{equation*}
    \begin{aligned}
        A_0(z) = -m+\sqrt{(m+A_{\per}(z))^2+S(z)},
    \end{aligned}
\end{equation*}
with $A_0$ plotted numerically in Figure \ref{fig:pt}.

\begin{figure}[H]
    \centering
    \includegraphics[width=0.75\linewidth]{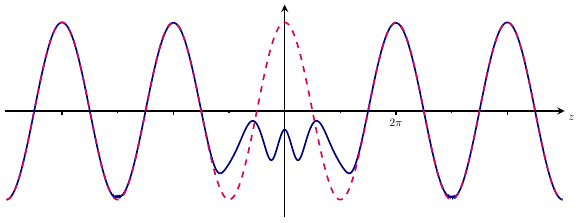}
    \caption{Magnetic potential of the operator with $\lambda_0=0.5$ as an embedded eigenvalue (blue line). The red dotted line is the periodic background potential $\cos(z)$.}
    \label{fig:pt}
\end{figure}

\section*{Acknowledgements}
J.J.~acknowledges the support of Lunds Universitet, where major parts of the research were carried out.

\printbibliography
\newpage

\end{document}